\documentclass[a4paper,10pt]{article}
\usepackage{a4wide}

\usepackage{amssymb,amsmath}
\usepackage{amsthm,sectsty}
\usepackage{graphicx}
\usepackage[english]{babel}
\usepackage{relsize}
\usepackage{color}
\usepackage{enumitem}
\usepackage{hyperref}
\usepackage{color}

\def\rr{\mathbb{R}} 
\def\sF{\mathcal{F}} 
\def\Pr{\mathbb{P}} 
\def\EX{\mathbb{E}}	
\def\dif{\mathrm{d}} 
\def\landau{\mathcal{O}} 
\def\part{\mathcal{T}_{\mathbf{h}}^N} 
\def\e{\textnormal{e}} 
\def\eps{\epsilon}
\def\ntilde{\widetilde}
\def\nhat{\widehat}
\def\nbar{\overline}

\def\pindex{\mathcal{P}}

\def\cX{X}
\def\cE{\mathcal{E}}

\newcommand{\tn}{\textnormal}
\newcommand{\trace}{\textnormal{Tr}\,}
\def\longrightharpoonup{\relbar\joinrel\rightharpoonup} 

\newcommand{\assref}[2]{(#1\ref{#2})}

\newcounter{ass}
\setcounter{ass}{000}

\theoremstyle{definition}

\newtheorem{remark}{Remark}[section]
\newtheorem{assumptions}[ass]{Assumptions}
\theoremstyle{plain}
\newtheorem{theorem}{Theorem}[section]
\newtheorem{lemma}{Lemma}[section]
\newtheorem{proposition}{Proposition}[section]
\newtheorem{corollary}{Corollary}[section]

\sectionfont{\large}
\subsectionfont{\normalsize}
\subsubsectionfont{\normalsize}

\numberwithin{equation}{section}

\newenvironment{enumerateII}
{\setlength{\leftmargini}{2.0em}
\begin{enumerate}[align=left]
  \setlength{\labelwidth}{1.0em}
  \setlength{\labelsep}{1.0em} 
  \setlength{\itemsep}{-0pt}

  }
{\end{enumerate}}

\newenvironment{enumerateI}
{\setlength{\leftmargini}{2.0em}
\begin{enumerate}[align=left]
  \setlength{\labelwidth}{0.5em}
  \setlength{\labelsep}{1.5em}
  \setlength{\itemsep}{-0pt}
  \setlength{\parsep}{0pt}  
  
  }
{\end{enumerate}}

\newenvironment{enumerate_ass}
{\setlength{\leftmargini}{2.5em}
\begin{enumerate}[align=left]
  \setlength{\labelwidth}{1.5em}
  \setlength{\labelsep}{1.0em}
  \setlength{\itemsep}{-0pt}
  \setlength{\parsep}{0pt}  
  
  }
{\end{enumerate}}

\setcounter{tocdepth}{3}

\parindent 0pt

\title{\Large Spatio-Temporal Hybrid (PDMP) Models: Central Limit Theorem and Langevin Approximation for Global Fluctuations.\\ Application to Electrophysiology.  \footnote{This work has been supported by the Agence Nationale de la Recherche through the ANR Project MANDy ``Mathematical Analysis of Neuronal Dynamics'' ANR-09-BLAN-0008.}}
\author{\normalsize\textsc{Martin G.~Riedler}\\
\small\emph{Institute for Stochastics, Johannes Kepler University}\\
\small\texttt{martin.riedler@jku.at}\\[2ex]
\normalsize\textsc{Mich\`ele~Thieullen}\\
\small\emph{Universit\'e Pierre et Marie Curie}\\
\small\texttt{michele.thieullen@upmc.fr}
}

\date{}

\begin{document}
 
\maketitle 

\begin{center}
\begin{minipage}{0.8\textwidth}
\emph{Abstract:} In the present work we derive a Central Limit Theorem for sequences of Hilbert-valued Piecewise Deterministic Markov process models and their global fluctuations around their deterministic limit identified by the Law of Large Numbers. We provide a version of the limiting fluctuations processes in the form of a distribution valued stochastic partial differential equation which can be the starting point for further theoretical and numerical analysis. We also present applications of our results to two examples of hybrid models of spatially extended excitable membranes: compartmental-type neuron models and neural fields models. These models are fundamental in neuroscience modelling both for theory and numerics. 

 \medskip

\emph{Keywords:} Piecewise Deterministic Markov Processes; infinite-dimensional stochastic processes; law of large numbers; central limit theorem; global fluctuations; Langevin approximation; stochastic excitable media; neuronal membrane models; neural field models

\medskip

\emph{MSC 2010:} 60B12; 60F05; 60J25; 92C20; 
\end{minipage}
\end{center}

\section{Introduction}

The present work studies the global fluctuations of hybrid processes also called Piecewise Deterministic Markov Processes (PDMPs) on Hilbert spaces around a macroscopic limit. That is, we present a limit theorem for the rescaled fluctuations of PDMPs which admit a deterministic limit in the sense of a weak law or large numbers. We prove that under the suitable convergence of the initial conditions the rescaled fluctuations converge to a generalised Ornstein-Uhlenbeck process given by the mild solution of an Hilbert-valued stochastic evolution equation. PDMPs generalise continuous time Markov chains and possess a wide range of applications. The interest in these processes results from the fact that they combine continuous deterministic evolution with random instantaneous events which entails their specification as hybrid processes. Additionally, PDMPs arise when non-homogenous continuous time Markov chains are turned into homogeneous Markov processes by the state-space extension \cite{YinZhang}. Hence, our results also provide limit theorems for non-homogeneous Markov chains. The central limit theorem in the present study builds up on previous work \cite{RTW} where we established a Law of Large Numbers for PDMPs and a limit theorem for their internal fluctuations. The theorem in this study thus completes the `trias' of limit theorems as have been previously considered for time homogeneous Markov chains \cite{Kurtz1,Kurtz2}, PDMPs in finite dimensional Euclidean space \cite{Wainrib1} and reaction-diffusion models \cite{Blount1,Kotelenez1}. 

\medskip

In recent studies, PDMPs proved particularly useful for modelling in neuroscience and physiology under bottom-up (\cite{Austin,BuckwarRiedler,BR_neural_fields,Wainrib1}) or multi-scale approaches (\cite{GenadotThieullen1,GenadotThieullen2, Wainrib2}). Indeed mathematical models for biological real-life processes are constructed on largely varying temporal and spatial scales. Depending on the perspective what seems to be a regular behaviour on one level of description arises as the emergent effect averaged over a large number of individual dynamics on a lower level at best described in a stochastic way. Examples in neuroscience are travelling waves on excitable membranes which arise due to the gating dynamics of huge numbers of individual ion channels immersed in the membrane (cf.~\cite{Koch}) or macroscopic models of brain activity where the activity is due to a huge network of interconnected individuals neurons (cf.~\cite{Bressloff1}). However, when the macroscopic averaged pattern is observed then clearly not all stochasticity is lost. Even the higher level dynamics show random fluctuations albeit often typically small. Nevertheless, these small fluctuations can be of fundamental functional importance to systems characterised by complex non-linear responses which is the often the case in real-life systems. Hence, small fluctuations can cause tremendous changes in functionality, in the positive as in the negative way. For instance previously mentioned ion channel fluctuations can cause propagation failure of nerve signals and thus noise places limitations on the size of nerve fibres (cf.~\cite{FaisalLaughlinWhite}). In many well known examples, fluctuations are shown to be inversely proportional to the square root of the system size. In the present spatio-temporal framework however the difficulty is precisely to define what the system size is. This is an important point since determining the noise scaling constitutes one of the practically relevant applications of the kind of limit theorems we prove in the present study. Another application is the provision of a limit model of the fluctuations process that is supposed to be more tractable than the original process. This procedure is usually called diffusion or Langevin approximation. For the models in this study, this tractable model takes the form of a stochastic partial differential equation. However, as being distribution valued, it forbids a direct approach by standard approximation methods and new interpretation has to be provided first. We believe that the stochastic partial differential equation which we provide as a version of the limiting fluctuations processes can be the starting point for further theoretical and numerical analysis. This mathematically challenging and practically highly relevant point can be addressed on the basis of the present work.

\medskip

\bigskip

The remainder of the paper is organised as follows. In Sections \ref{sec_previous_results} and \ref{sec_law_of_large_numbers} we review the class of PDMPs used in this study as well as results we obtained previously which are relevant here. In particular, we present a law of large numbers for PDMPs providing a macroscopic `average' model around which the fluctuations are studied. In Section \ref{section_fluctuation_limit} we state the Central Limit Theorem which is the main result of this study and discuss its sufficient condition. The proof is presented in Section \ref{section_proofs}. In Section \ref{section_applications} we apply the limit theorem to PDMP models of excitable membranes and neural fields. The Appendix contains technical proofs of auxiliary results for the establishment of our main theorem.

\bigskip

\begin{remark} [Notation] We gather here notations regarding Hilbert spaces. We use $\ast$ for the dual of an Hilbert space as well as the adjoint of a linear operator. Round brackets $(\cdot,\cdot)_H$ denote inner product in the Hilbert space $H$. Angle brackets $\langle\cdot,\cdot\rangle_H$ denote the duality pairing in $H$, i.e., the application of a linear, bounded functional in $H^\ast$ (first argument) to an element of $H$ (second argument). Square brackets $[\cdot]$ are used to denote the elements where partial derivatives are evaluated at. Given two Hilbert spaces $X,H$ the notation $X\hookrightarrow H$ (continuous embedding) means that $X\subset H$ and the embedding operator is continuous: $\|u\|_H\leq C \|u\|_X$ for all $u\in X$, for some finite constant $C$. An evolution triplet features $X\hookrightarrow H\hookrightarrow X^\ast$, where $X$ and $H$ are Hilbert spaces; the inner product in $H$ is identified with the duality pairing in $X$:  for all $\phi\in H$ and $u\in X$, $\langle \phi,u\rangle_X=(\phi,u)_H$. These embeddings must be distinguished from the canonical identification of the Hilbert space $X$ with its dual, therefore we use for this a distinguished notation: $\iota_X$ for the canonical embedding of an element on $X$ into the dual $X^\ast$ and $\iota^{-1}_X$ for the Riesz Representation of an element in $X^\ast$ by an element in $X$. Accordingly, $\langle \iota_H(x),y\rangle_X=(x,y)_X$ for all $y\in X$ and $\iota^{-1}_X(\phi)$ is the unique element in $X$ such that $\langle\phi,y\rangle_X=(\iota^{-1}_X(\phi),y)_X$ for all $y\in X$.
\end{remark}

\section{Spatio-Temporal Piecewise Deterministic Processes}\label{sec_previous_results}

We give a brief definition of hybrid models / PDMPs relevant for the present study and refer to \cite{Davis2,Jacobsen,RiedlerPhD} for a more detailed discussion. Let $(\Omega,\sF,(\sF_t)_{t\geq 0}, \Pr)$ be a filtered probability space satisfying the usual conditions, $X$ and $H$ denote separable Hilbert spaces forming an evolution triplet $X\subset H\subset X^\ast$ and $K$ be an at most countable set. In this study all spaces and sets are equipped with their Borel-$\sigma$-fields and measurability always means Borel-measurability. Then a PDMP $(U_t,\Theta_t)_{t\geq 0}$ is a c\`adl\`ag strong Markov process taking values in $H\times K$ which is uniquely defined by the quadruple $(A,B,\Lambda,\mu)$ in the following sense:

\begin{enumerateII}

\item The operators $A: X\times K\to X^\ast$, which is linear in its $X$-argument, and $B:X\times K\to X^\ast$ are such that the abstract evolution equations
\begin{equation}\label{abstract_evolution_equation_fam}
 \dot u = A(\theta) u + B(u,\theta) \qquad\forall\,\theta\in K
\end{equation}
are well-posed in the weak sense for any initial condition $u_0\in H$, i.e., there exists a unique weak solution $u$ to \eqref{abstract_evolution_equation_fam} such that $u\in L^2((0,T),X)\cap H^1((0,T),X^\ast)\subset C([0,T],H)$ for all $T>0$ with $u(0)=u_0$ in $H$. We denote by $\phi_t(u_0,\theta)$ the value of the unique solution corresponding to the parameter $\theta$ at time $t$ started in $u_0$. Then the PDMP $(U_t,\Theta_t)_{t\geq 0}$ satisfies $U_t=\phi_{t-\tau_k}(U_{\tau_k})$ for $t\in[\tau_k,\tau_{k+1})$, where the random variables $\tau_k$, $k\in\mathbb{N}$, denote the jump-times of the PDMP with $\tau_0=0$.

\item The measurable map $\Lambda: H\times K\to \rr_+$ is locally path integrable along the solutions of \eqref{abstract_evolution_equation_fam}, i.e,
\begin{equation*}
\int_0^T \Lambda(\phi_s(u,\theta),\theta)\,\dif s\ <\ \infty \qquad\forall\,(u,\theta)\in H\times K,\,T>0\,, 
\end{equation*}
and defines the distribution of the jump times via
\begin{equation*}
 \Pr\bigl[\tau_{k+1}\geq t\,\big|\,\sF_{\tau_k}
\bigr]=\exp\Bigl(-\int_0^{t-\tau_k}\Lambda(\phi_{s}(U_{\tau_k},\Theta_{\tau_k}),\Theta_{\tau_k})\,\dif s\Bigr)\qquad\forall\, t\geq\tau_k\,.
\end{equation*}
Hence, the process $\Lambda(U_t,\Theta_t)$ states the random instantaneous jump rate of the PDMP.
\item Finally, the Markov kernel $\mu$ from $H\times K$ into $K$ defines the conditional distribution of the post jump values, i.e.,
\begin{equation*}
\Pr\bigl[(U_{\tau_k},\Theta_{\tau_k})\in A\,\big|\bigr(U_{\tau_k-},\Theta_{\tau_{k-1}})] \, =\, \Bigl(\delta_{U_{\tau_k-}}\times\mu\bigl((U_{\tau_k-},\Theta_{\tau_{k-1}}),\cdot\bigr)\Bigr)(A)
\end{equation*}
for all Borel sets $A$ in $H\times K$, with $\mu\bigl((U_{\tau_k-},\Theta_{\tau_{k-1}}),\{U_{\tau_k-}\}\times K\bigr)=1$. This implies that the components $U_t$ and $\Theta_t$ possess continuous and piecewise constant paths almost surely, respectively.
\end{enumerateII}

In this study we assume that all PDMPs are regular, i.e., $\lim_{k\to\infty}\tau_k=\infty$ a.s., and thus possess only finitely many jumps in any finite time interval almost surely. This assumption is particularly satisfied if the jump rate $\lambda$ is bounded. The domain of the extended generator of the process and its form for certain important functions is given in the next theorem, which is proven in \cite{BuckwarRiedler,RiedlerPhD}.


\begin{theorem}\label{PDMP_gen_theorem}\begin{enumerateI}

\item A bounded, measurable function $f: H\times K\to\rr$ is in the domain of the extended generator of a PDMP if the mapping $t\mapsto f(U_t,\Theta_t)$ is absolutely continuous almost surely and the mapping $(\xi,s,\omega)\mapsto f(U_{s-},\xi)-f(U_{s-}(\omega),\Theta_{s-}(\omega))$ is integrable in the mean with respect to the random measure $\Lambda(U_{s-},\Theta_{s-})\mu\bigl((U_{s-},\Theta_{s-}),\dif\xi\bigr)\dif s$. 

\item Moreover, if in addition $f$ is continuously Fr\'echet-differentiable with respect to its first argument and such that the Riesz Representation $f_u\in H$ of the Fr\'echet derivative satisfies $f_u(u,\theta)\in X$ for $u\in X$ and is a locally bounded composition operator in $L^2((0,T),X)$, then the extended generator $\mathcal{A}f$ is given by
\begin{equation}\label{infindimGen}
\mathcal{A}f(u,\theta)=\bigl\langle A(\theta)\,u+B(\theta,u), f_u(u,\theta)\bigr\rangle_X+\Lambda(u,\theta)\int_K \Bigl(f(u,\xi)-f(u,\theta)\Bigr)\,\mu\bigl((u,\theta),\dif \xi\bigr)\,.
\end{equation}
\end{enumerateI}
\end{theorem}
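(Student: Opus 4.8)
\emph{Proof proposal.} The plan is to work through the martingale characterisation of the extended generator: a bounded measurable $f$ lies in its domain with $\mathcal{A}f=g$ precisely when $s\mapsto g(U_s,\Theta_s)$ is almost surely locally integrable and
\[
M^f_t \;:=\; f(U_t,\Theta_t) - f(U_0,\Theta_0) - \int_0^t g(U_s,\Theta_s)\,\dif s
\]
defines a local $(\sF_t)$-martingale. Since every PDMP here is regular, on each stochastic interval $[\tau_k,\tau_{k+1})$ the path $s\mapsto U_s$ coincides with the deterministic flow $\phi_{s-\tau_k}(U_{\tau_k},\Theta_{\tau_k})$ and $\Theta_s\equiv\Theta_{\tau_k}$, while at $\tau_{k+1}$ only the discrete component jumps because $U$ has continuous paths. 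I would therefore decompose the increment of $f$ along a path into a ``flow part'' accumulated between jumps and a ``jump part'' collected at the jump times, and then compensate the jump part.

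For part (a): on $[\tau_k,\tau_{k+1})$ the assumed absolute continuity of $t\mapsto f(U_t,\Theta_t)$ gives
\[
f(U_{\tau_{k+1}-},\Theta_{\tau_k}) - f(U_{\tau_k},\Theta_{\tau_k}) \;=\; \int_{\tau_k}^{\tau_{k+1}} \tfrac{\dif}{\dif s} f(U_s,\Theta_s)\,\dif s ,
\]
and summing over the (almost surely finitely many) jumps in $[0,t]$ yields
\[
f(U_t,\Theta_t) - f(U_0,\Theta_0) \;=\; \int_0^t \tfrac{\dif}{\dif s} f(U_s,\Theta_s)\,\dif s \;+\; \sum_{k\,:\,\tau_k\le t}\Bigl(f(U_{\tau_k-},\Theta_{\tau_k}) - f(U_{\tau_k-},\Theta_{\tau_k-})\Bigr).
\]
By the standard description of the jump structure of a PDMP (cf.~\cite{Davis2,Jacobsen,RiedlerPhD}) the random jump measure of $(U,\Theta)$ has predictable compensator $\Lambda(U_{s-},\Theta_{s-})\,\mu\bigl((U_{s-},\Theta_{s-}),\dif\xi\bigr)\,\dif s$; the hypothesis that $(\xi,s,\omega)\mapsto f(U_{s-},\xi)-f(U_{s-},\Theta_{s-})$ be integrable in the mean against this measure is exactly what turns the compensated jump sum into a martingale. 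Subtracting that compensator from the displayed identity, and noting that the first term on the right is already absolutely continuous in $t$ (so its integrand is locally integrable along paths), exhibits $M^f_t$ and shows $f\in\dom(\mathcal{A})$.

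For part (b) it remains to identify $\tfrac{\dif}{\dif s}f(U_s,\Theta_s)$ explicitly. On $(\tau_k,\tau_{k+1})$ the path $u(s):=U_s$ is the weak solution of \eqref{abstract_evolution_equation_fam} for $\theta=\Theta_{\tau_k}$, so $u\in L^2((\tau_k,\tau_{k+1}),X)$ and $\dot u\in L^2((\tau_k,\tau_{k+1}),X^\ast)$. The analytic core is a chain rule in the evolution triple: if $f(\cdot,\theta)$ is continuously Fr\'echet differentiable on $H$ with Riesz representative $f_u(\cdot,\theta)$ mapping $X$ into $X$ and acting as a locally bounded composition operator on $L^2((0,T),X)$, then $t\mapsto f(u(t),\theta)$ is absolutely continuous with
\[
\tfrac{\dif}{\dif t} f(u(t),\theta) \;=\; \bigl\langle \dot u(t),\, f_u(u(t),\theta)\bigr\rangle_X \qquad\text{for a.e.\ }t ,
\]
which generalises the familiar identity $\tfrac{\dif}{\dif t}\|u(t)\|_H^2 = 2\langle \dot u(t),u(t)\rangle_X$ for evolution triples. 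I would prove this by regularising $u$ in time to obtain $u_\varepsilon\in C^1([0,T],X)$ with $u_\varepsilon\to u$ in $L^2((0,T),X)$ and $\dot u_\varepsilon\to\dot u$ in $L^2((0,T),X^\ast)$, applying the ordinary chain rule in $H$ along $u_\varepsilon$, and passing to the limit using the composition-operator property to get $f_u(u_\varepsilon,\theta)\to f_u(u,\theta)$ in $L^2((0,T),X)$, the pairing being controlled by Cauchy--Schwarz between $X^\ast$ and $X$. Substituting $\dot u = A(\theta)u+B(u,\theta)$ and feeding the result into the decomposition of part (a) produces exactly \eqref{infindimGen}.

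\textbf{Main obstacle.} The delicate step is the evolution-triple chain rule for a \emph{nonlinear} $f$: the regularisation-and-limit argument requires $u\mapsto f_u(u,\theta)$ to be not merely bounded but sequentially continuous from $L^2((0,T),X)$ to itself, so that the nonlinear term survives the limit, and it requires the a.e.-defined pairing $\langle\dot u(t),f_u(u(t),\theta)\rangle_X$ to be integrable on $[0,T]$ — both are precisely what the ``locally bounded composition operator'' hypothesis is designed to guarantee. By contrast, the compensator of the jump measure, the finiteness of the number of jumps on bounded intervals (regularity), and the local integrability of the generator along paths are routine PDMP bookkeeping.
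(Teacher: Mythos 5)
The paper does not actually prove Theorem \ref{PDMP_gen_theorem}; it states that the result ``is proven in \cite{BuckwarRiedler,RiedlerPhD}'', so there is no in-paper proof to compare against. Your outline reconstructs exactly the standard argument used in those references: the martingale characterisation of the extended generator, the pathwise decomposition of $f(U_t,\Theta_t)$ into an interjump flow part plus a jump sum compensated by $\Lambda(U_{s-},\Theta_{s-})\,\mu\bigl((U_{s-},\Theta_{s-}),\dif\xi\bigr)\dif s$ for part (a), and the evolution-triple chain rule $\tfrac{\dif}{\dif t}f(u(t),\theta)=\langle\dot u(t),f_u(u(t),\theta)\rangle_X$ together with $\dot u=A(\theta)u+B(u,\theta)$ for part (b). The one point you flag as delicate --- that the regularisation-and-limit argument needs sequential continuity, not merely local boundedness, of the composition operator $u\mapsto f_u(u,\theta)$ on $L^2((0,T),X)$ --- is indeed the genuine technical content of the hypothesis; it is supplied by the assumed continuity of the Fr\'echet derivative and does not invalidate your approach.
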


An immediate consequence of Theorem \ref{PDMP_gen_theorem} is \emph{Dynkin's formula} for PDMPs. If the stochastic integral of $f$ with respect to the associated fundamental martingale measure is a martingale, then it holds that
\begin{equation}\label{Dynkin_formula}
\EX[f(Y_t)|Y_0]=f(Y_0)+\EX\Bigl[\int_0^t\mathcal{A}f(Y_s)\dif s\Big|Y_0\Bigr]\,.
\end{equation}

\begin{remark}\label{exponential_function_props} Important for our study are the functions $f$ of the following type 
\begin{equation}\label{def_of_a_special_f}
f: H\times K \to \mathbb{C}: (u,\theta)\mapsto\exp\Bigl(i\,\langle \psi,u \rangle_H+i\,\langle\Phi,z(\theta)\rangle_E\Bigr),
\end{equation}
where $E$ is a separable Hilbert space, $i$ is the complex element, $\psi\in X\hookrightarrow H^\ast$, $z:K\to E$ is a measurable map and $\Phi\in E^\ast$. We show in Appendix \ref{appendix_function_example} that the function \eqref{def_of_a_special_f} indeed satisfies the conditions of Theorem \ref{PDMP_gen_theorem} (b).
\end{remark}

\subsection{A particular class of PDMPs}

Motivated by applications, we subsequently work with specific sequences of PDMPs that we now describe. In applications, mainly the PDMP's continuous component $U$ is the variable of interest which may be experimentally observed and one is usually only interested in this part of the information the piecewise constant component $\Theta$ provides that is sufficient to define the dynamics of the continuous component. In the following we consider PDMPs indexed by $n\in\mathbb{N}$ defined by the quadruples $(A^n,B^n,\Lambda^n,\mu^n)$ on the probability spaces $(\Omega^n,\sF^n,(\sF^n_t)_{t\geq 0}, \Pr^n)$, where we assume that there exists a separable Hilbert space\footnote{In the study \cite{RTW} the functions $z^n$, called coordinate functions, are actually vector-valued with components in a Hilbert space and then $E$ corresponds to the direct product of the spaces which is a Hilbert space itself, hence no generality is lost presently.} $E$ and a sequence of measurable functions $z^n:K^n\to E$, such that 
\begin{equation*}
A^n(\theta^n)=A(z^n(\theta^n)),\qquad B^n(u,\theta)=B(u,z^n(\theta^n))\qquad\forall\,\theta^n\in K^n, n\in\mathbb{N}\,,
\end{equation*}
for some operators $A:E\to L(X,X^\ast)$ and $B:X\times E\to X^\ast$. That is, the continuous dynamics depend on the piecewise constant component only via the functions $z^n$. 
%
%
%
%
Thus, actually, the limit theorems we derive for are limit theorems for the sequence $(U^n_t,z^n(\Theta^n_t))_{t\geq 0}$ (see \cite{RTW}). 

\bigskip

Towards applications, we are particularly interested in excitable membranes and neural fields. The aim is to model on the one hand a spatially extended neuronal membrane with a finite number of channels immersed and on the other hand a finite populations of neurons in a brain region. Indeed this corresponds to biologically realistic situations and to experimental conditions as well since in an imagery treatment only finite number of neurons can be observed. For these two cases, using the spatial character of the models, we consider partitions of a full spatial domain $D\subset\rr^d$ into subdomains $D^{k,n}$, $k=1,\ldots,p(n)$. In the case of an excitable membrane equipped with ion channels this is a partition of the membrane patch. Whereas in the case of neural fields it is a partition of the brain area of interest. Then the piecewise constant component $\theta^n=(\theta^{1,n},\ldots,\theta^{p(n),n})\in K^n\subseteq\mathbb{N}^{p(n)}$ counts the number of open ion channels (or the number of active neurons) in the subdomains $D^{k,n}$. The total number of channels (or neurons) in the subdomain $D^{k,n}$ is denoted by $l(k,n)<\infty$. In these two cases the coordinate functions $z^n$ are given by
\begin{equation}\label{def_neuron_coordinate}
z^n(\theta^n)=\sum_{k=1}^{p(n)} \frac{\theta^{k,n}}{l(k,n)}\,\mathbb{I}_{D^{k,n}} \in L^2(D)\,,
\end{equation}
which gives at each point in space the fraction of open ion channels (active neurons) in the respective domain. Due to the spatially piecewise constant nature of the coordinate functions in we call these PDMPs {\it Compartmental Models}.

\subsection{Quadratic variation associated with PDMPs}

Before proceeding further, we now discuss a fundamental fact regarding the aim of deriving limit theorems in our framework: the decomposition
\begin{equation}\label{proof_lln_decomposition}
z^n(\Theta^n_t)=z^n(\Theta^n_0)+\int_0^t \bigr(\mathcal{A} z^n\bigl)(U^n_t,\Theta^n_t) \dif t +M^n_t
\end{equation}
of the piecewise constant component of the PDMP. The integral in the right hand side of \eqref{proof_lln_decomposition} exists in the sense of Bochner and the process $M^n$ is a c\`adl\`ag, square-integrable $E$-valued martingale under Assumptions \ref{assumptions_LLN} displayed in the next section. As \eqref{proof_lln_decomposition} can be considered a stochastic evolution equation driven by the martingale $M^n$, one can think of these martingales containing the inherent stochasticity in the process. To prove a central limit theorem one has to investigate more closely the structure of the martingale $M^n$ in (\ref{proof_lln_decomposition}) and establish a non-trivial limit under suitable rescaling.

\bigskip

The quadratic variation $G^n\in L(\cE,\cE^\ast)$ associated with the PDMPs $(U^n,\Theta^n)$ is the mapping $G^n$ defined via a bilinear form on $\cE$ given by
\begin{eqnarray}\label{definition_quad_var_operator}
\lefteqn{(\Phi,\Psi)\mapsto\langle G^n(U^n_t,\Theta^n_t)\,\Phi,\Psi \rangle_\cE\ =}\nonumber\\[2ex]
&&\phantom{xxx}\Lambda^n(U^n_t,\Theta^n_t)\int_{K^n}\Bigl(\langle z^n(\xi)-z^n(\Theta^n_t),\Phi \rangle_\cE\Bigr)\Bigl(\langle z^n(\xi)-z^n(\Theta^n_t),\Psi \rangle_\cE\Bigr)\,\mu^n\bigl((U^n_t,\Theta^n_t),\dif\xi\bigr).\phantom{xxxx}
\end{eqnarray}
Then it holds that the operators $G^n(U^n,z^n)$ are symmetric, positive and, anticipating the condition \eqref{ass_C_generator_diff_bound_1} below, of trace class. Furthermore, for the martingale part in \eqref{proof_lln_decomposition} it holds
\begin{eqnarray}
\EX^n\|M^n_t\|_{\cE^\ast}^2&=&\EX^n\int_0^t\trace G^n(U^n_t,\Theta^n_t)\,\dif t\\[2ex]
&=&\EX^n\Bigl[\int_0^t\Lambda^n(U^n_t,\Theta^n_t)\int_{K^n}\| z^n(\xi)-z^n(\Theta^n_t)\|_{\cE^\ast}^2\,\mu^n\bigl((U^n_t,\Theta^n_t),\dif\xi\bigr)\,\dif t\Bigr]\,.\nonumber
\end{eqnarray}
For a more detailed discussion of quadratic variations associated with PDMPs and proofs of the cited results we refer to \cite[Section 3]{RTW}.

\section{Law of Large Numbers}\label{sec_law_of_large_numbers}

In this section we review the Law of Large Numbers obtained in \cite{RTW} which establishes the convergence of the PDMP sequence to a deterministic trajectory. Then we present a simple corollary relevant for the present study.  \medskip

In our framework the deterministic limit solves an evolution problem as follows. Let $F:X\times E\to E$ be an operator such that the deterministic abstract evolution system
\begin{equation}\label{det_limit_process}
\left.\begin{array}{rcl}\dot u & = & A(p)u+B(u,p),\\[2ex] \dot p &=& F(u,p)
\end{array}\right.
\end{equation}
is well-posed in the following sense: for suitable initial condition $(u_0,p_0)\in H\times E$ the system \eqref{det_limit_process} possesses a unique global weak solution such that for all $T>0$ the component $u$ is in $H^1((0,T),X^\ast)\cap L^2((0,T),X)\subset C([0,T],H)$ and $p$ is in $ H^1((0,T),E)\subset C([0,T],E)$. We now describe the set of assumptions (Assumptions \ref{assumptions_LLN}) under which we proved the Law of Large Numbers.

\begin{assumptions}\label{assumptions_LLN} We assume the following conditions are satisfied:
\begin{enumerate_ass}
\item\label{ass_A_lipschitz_conds} The operators $A,\,B$ and $F$ satisfy almost surely Lipschitz-type conditions uniformly in $n$ along the deterministic and stochastic solutions, i.e., for all $T>0$ there exist constants $L_1,\, L_2$, independent of $n$, such that almost surely
\begin{eqnarray*}
\lefteqn{\int_0^T\langle A(z^n(\Theta^n_t))\,U^n_t-A(p(t))\,u(t),U^n_t-u(t)\rangle_X
+\langle B(U^n,z^n(\Theta^n))-B(u,p),U^n-u\rangle_X\,\dif t}\nonumber\\ &&\phantom{xxxxxxxxxxxxxxxxxxxxx}\leq\ L_1\int_0^T\|U^n_t-u(t)\|^2_H+
\|z^n(\Theta^n_t)-p(t)\|_E^2\,\dif t\,.\phantom{xxxxxx}
\end{eqnarray*}
and
\begin{equation*}
\|F(u,p)-F(U^n,z^n(\Theta^n))\|_{L^2((0,T),E)}\leq L_2 \|(u,p)-(U^n,z^n(\Theta^n))\|_{L^2((0,T),H\times E)}
\end{equation*}
Note that the constants $L_1,\, L_2$ may depend on $T$ and $(u,p)$.
\item\label{ass_A_IC_convergence} The initial conditions converge in probability, i.e., for all $\eps>0$
\begin{equation*}
\lim_{n\to\infty}\Pr^n\Bigl[\|U^n_0-u_0\|_H^2+\|z^n(\Theta_0^n)-p_0\|_E^2>\eps\,\Bigr]=0\,.
\end{equation*}
\item\label{ass_A_fluid_limit_ass} The fluid limit assumption holds in probability, i.e., for all $T,\eps >0$
\begin{equation*}
\lim_{n\to\infty}\Pr^n\Bigl[\int_0^T\big\|\bigl[\mathcal{A}^n\langle\,\cdot\,,z^n(\cdot)\rangle_E\bigr](U^n_s,\Theta^n_s) -  F(U_s^n,z^n(\Theta^n_s))\big\|_E^2\,\dif s > \eps\Bigr] =0.
\end{equation*}
\item\label{ass_A_second_moments} The second moments of the jump sizes vanish in the limit, i.e., for all $T>0$
\begin{equation*}
\lim_{n\to\infty}\, \EX^n\,\int_0^T\Bigl[\Lambda^n(U^n_s,\Theta^n_s)\int_{K_n} \|z^n(\xi)-z^n(\Theta^n_s)\|_E^2\,\mu^n\bigl((U^n_s,\Theta^n_s),\dif\xi\bigr)\Bigr]\,\dif s\,=\,0\,.
\end{equation*}
\end{enumerate_ass}
\end{assumptions}

\begin{theorem}[Law of Large Numbers {\cite[Thm.~4.1]{RTW}}]\label{theorem_lln} Under the \tn{Assumptions \ref{assumptions_LLN}} the sequence $(U^n_t,z^n(\Theta^n_t))_{t\geq 0}$ converges uniformly on compacts in probability (u.c.p.) to the solution $(u(t),p(t))_{t\geq 0}$ of \eqref{det_limit_process}, that is, for all $T,\,\eps>0$
\begin{equation}\label{the_LLN_property}
\lim_{n\to\infty}\Pr^n\Bigl[\sup\nolimits_{t\in[0,T]}\Bigl(\|U^n_t-u(t)\|_H^2+\|z^n(\Theta^n_t)-p(t)\|_E^2\Bigr)>\eps\Bigr]=0\,.
\end{equation}
\end{theorem}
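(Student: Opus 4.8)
The plan is to control the squared error process $R^n(t):=\|U^n_t-u(t)\|_H^2+\|z^n(\Theta^n_t)-p(t)\|_E^2$ by a Gronwall-type inequality whose inhomogeneous term is made uniformly small on $[0,T]$ precisely by the four hypotheses of Assumptions \ref{assumptions_LLN}. The two components of the error are handled separately: the continuous component $U^n$ by an energy estimate in the evolution triplet $X\hookrightarrow H\hookrightarrow X^\ast$, and the piecewise-constant component $z^n(\Theta^n)$ through its semimartingale decomposition \eqref{proof_lln_decomposition}.

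First I would treat $U^n$. Since the jump kernel $\mu^n$ leaves the first coordinate unchanged, $t\mapsto U^n_t$ is continuous and is, weakly on all of $[0,T]$, the solution of the non-autonomous equation $\dot v=A(z^n(\Theta^n_t))v+B(v,z^n(\Theta^n_t))$, so that $U^n\in L^2((0,T),X)\cap H^1((0,T),X^\ast)$; the limit $u$ lies in the same class by the well-posedness assumed for \eqref{det_limit_process}. The integration-by-parts formula in the triplet applied to $\|U^n_t-u(t)\|_H^2$, after substituting the two evolution equations, turns the arising time integral into exactly the quantity bounded in \assref{A}{ass_A_lipschitz_conds}, giving $\|U^n_t-u(t)\|_H^2\le\|U^n_0-u_0\|_H^2+2L_1\int_0^tR^n(s)\,\dif s$ almost surely. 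Next, for $z^n(\Theta^n)$ I would subtract $p(t)=p_0+\int_0^tF(u(s),p(s))\,\dif s$ from \eqref{proof_lln_decomposition} and split the drift defect as $\bigl[(\mathcal{A}^n\langle\cdot,z^n(\cdot)\rangle_E)(U^n_s,\Theta^n_s)-F(U^n_s,z^n(\Theta^n_s))\bigr]+\bigl[F(U^n_s,z^n(\Theta^n_s))-F(u(s),p(s))\bigr]$, so that $z^n(\Theta^n_t)-p(t)$ becomes a sum of the initial error, two time integrals and the martingale $M^n_t$. Estimating with $\|a+b+c+d\|_E^2\le4(\|a\|_E^2+\|b\|_E^2+\|c\|_E^2+\|d\|_E^2)$, the Cauchy--Schwarz inequality in time, and the $L_2$-Lipschitz bound of \assref{A}{ass_A_lipschitz_conds}, I obtain
\[
\|z^n(\Theta^n_t)-p(t)\|_E^2\le4\|z^n(\Theta^n_0)-p_0\|_E^2+4T\,\Xi^n(t)+4TL_2^2\int_0^tR^n(s)\,\dif s+4\sup_{s\le t}\|M^n_s\|_E^2,
\]
where $\Xi^n(t):=\int_0^t\bigl\|(\mathcal{A}^n\langle\cdot,z^n(\cdot)\rangle_E)(U^n_s,\Theta^n_s)-F(U^n_s,z^n(\Theta^n_s))\bigr\|_E^2\,\dif s$ is the integrated fluid-limit defect.

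Adding the two bounds gives $R^n(t)\le\zeta^n(t)+C\int_0^tR^n(s)\,\dif s$ with $C:=2L_1+4TL_2^2$ and $\zeta^n(t):=\|U^n_0-u_0\|_H^2+4\|z^n(\Theta^n_0)-p_0\|_E^2+4T\,\Xi^n(t)+4\sup_{s\le t}\|M^n_s\|_E^2$ non-decreasing in $t$; Gronwall's lemma then yields $\sup_{t\le T}R^n(t)\le\zeta^n(T)\,\e^{CT}$. It remains to show $\zeta^n(T)\to0$ in probability: the two initial terms do so by \assref{A}{ass_A_IC_convergence}, the defect $\Xi^n(T)$ does so by \assref{A}{ass_A_fluid_limit_ass}, and for the martingale Doob's maximal inequality together with the quadratic-variation identity $\EX^n\|M^n_t\|_E^2=\EX^n\int_0^t\trace G^n(U^n_s,\Theta^n_s)\,\dif s$ recalled in Section \ref{sec_previous_results} and \assref{A}{ass_A_second_moments} give $\EX^n\sup_{t\le T}\|M^n_t\|_E^2\to0$, hence convergence in probability. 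This is exactly \eqref{the_LLN_property}.

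The step I expect to be most delicate is not any single estimate but the glue between them. On the analytic side one must legitimise treating $U^n$ as a genuine global weak solution to which the chain rule for $t\mapsto\|U^n_t-u(t)\|_H^2$ applies, which rests on the continuity of the $U$-path across jumps and on $U^n,u$ lying in the regularity class of the triplet. On the probabilistic side one must merge three different notions of smallness --- convergence in probability of the initial error and of the fluid-limit defect, and convergence in mean of the martingale's quadratic variation --- into the single u.c.p. conclusion; this works only because $\zeta^n$ is monotone in $t$, so that the supremum over $[0,T]$ can be extracted before passing to the limit. The uniformity in $n$ of the constants $L_1,L_2$ granted by \assref{A}{ass_A_lipschitz_conds} is what allows one to avoid a stopping-time localisation altogether.
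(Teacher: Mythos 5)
Your proposal is correct and follows essentially the same route as the paper, which only sketches this proof (deferring to \cite[Thm.~4.1]{RTW}) but whose sketch and surrounding estimates --- the semimartingale decomposition \eqref{proof_lln_decomposition}, the energy/Gronwall bounds in the proof of the corollary after Theorem \ref{theorem_lln} and in Lemma \ref{lemma_uniform_integrable}, and the Doob-plus-trace argument for the martingale term --- are exactly the steps you carry out. The one delicate point you flag, namely that $U^n$ is a global weak solution despite the jumps, is handled by the paper precisely as you suggest (continuity of the $U$-path across jumps plus regularity of the PDMP, so the exceptional set is a finite union of Lebesgue null sets).
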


Heuristically, the proof of Theorem \ref{theorem_lln} (law of large numbers) is done by inserting into the decomposition \eqref{proof_lln_decomposition} the limits for the initial condition and the integral given in assumptions \assref{A}{ass_A_IC_convergence} and \assref{A}{ass_A_fluid_limit_ass} and noting that the martingale $M^n$ in \eqref{proof_lln_decomposition} converges to zero due to assumption \assref{A}{ass_A_second_moments}. The resulting equation is just the equation for $p$ in \eqref{det_limit_process}. 

\begin{remark} The u.c.p.~convergence \eqref{the_LLN_property} immediately extends to any normed spaces the Hilbert spaces $H$ and $E$ are continuously embedded in. 
Moreover u.c.p.~convergence implies that the processes also converge weakly on the space of c\`adl\`ag functions $D(\rr_+,H\times E)$. 
\end{remark}

Conversely to the extension of the law of large numbers to spaces with weaker norms, it can also - albeit not immediately - be extended to spaces of higher spatial and/or temporal regularity, provided that the continuous component of the PDMP sequence and the deterministic limit are sufficiently regular. The extension of the results works along the lines of standard estimation techniques from the analysis of (linear) evolution equation (see, e.g., \cite{Evans}). (For a specific model, a more direct method was used in \cite{Austin}.) A first extension, which we employ later in this study, is stated in the following corollary.

\begin{corollary} Let $T>0$ be fixed and assume that the operator $A$ is independent of $p$ and satisfies the energy estimate
\begin{equation*}
\gamma_1\|u\|_X^2\leq-\langle A\,u,u\rangle_X+\gamma_2\|u\|_H^2 \quad \forall\, u\in X
\end{equation*}
for constants $\gamma_1>0$ and $\gamma_2\geq 0$ and that the mapping $B: H\times E\to X^\ast$ is continuous. Then for all $T,\eps \geq 0$,
\begin{eqnarray*}
\lim_{n\to\infty}\Pr^n\bigl[\|U^n-u\|_{L^2((0,T),X)}>\eps\bigr] =0\,.
\end{eqnarray*}
\end{corollary}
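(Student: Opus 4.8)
The plan is to compare $U^n$ with $u$ directly at the level of the weak evolution equations they solve and then to feed the u.c.p.\ convergence of Theorem~\ref{theorem_lln} into a standard energy estimate. Since $A$ does not depend on the discrete parameter, on each inter-jump interval $[\tau_k,\tau_{k+1})$ the continuous component $U^n$ is the weak solution of $\dot v=Av+B(v,z^n(\Theta^n_t))$ with the (there constant) forcing $z^n(\Theta^n_t)$; as $U^n$ has continuous $H$-valued paths and the PDMP is regular (only finitely many jumps on $[0,T]$ almost surely), $U^n$ is in fact a global weak solution on $[0,T]$ of $\dot v=Av+B(v,z^n(\Theta^n_t))$ with the c\`adl\`ag forcing $t\mapsto z^n(\Theta^n_t)$, and thus $U^n\in L^2((0,T),X)\cap H^1((0,T),X^\ast)$ almost surely; likewise $u\in L^2((0,T),X)\cap H^1((0,T),X^\ast)$ solves $\dot u=Au+B(u,p(t))$ by \eqref{det_limit_process}. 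Setting $w^n:=U^n-u$ and $\beta^n_t:=B(U^n_t,z^n(\Theta^n_t))-B(u_t,p(t))$, we get $\dot w^n=Aw^n+\beta^n$ with $w^n(0)=U^n_0-u_0$.

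Next I would test this equation against $w^n$. By the integration-by-parts lemma for the evolution triplet $X\hookrightarrow H\hookrightarrow X^\ast$, the map $t\mapsto\|w^n_t\|_H^2$ is absolutely continuous with $\tfrac12\tfrac{\dif}{\dif t}\|w^n_t\|_H^2=\langle Aw^n_t,w^n_t\rangle_X+\langle\beta^n_t,w^n_t\rangle_X$ for a.e.\ $t$. Inserting the energy estimate $\langle Aw^n_t,w^n_t\rangle_X\le-\gamma_1\|w^n_t\|_X^2+\gamma_2\|w^n_t\|_H^2$ and Young's inequality $\langle\beta^n_t,w^n_t\rangle_X\le\tfrac{\gamma_1}{2}\|w^n_t\|_X^2+\tfrac{1}{2\gamma_1}\|\beta^n_t\|_{X^\ast}^2$, integrating over $[0,T]$ and absorbing the $\|w^n_t\|_X^2$-term on the left, gives
\begin{equation*}
\|U^n-u\|_{L^2((0,T),X)}^2\ \le\ \frac{1}{\gamma_1}\|U^n_0-u_0\|_H^2+\frac{2\gamma_2}{\gamma_1}\int_0^T\|w^n_t\|_H^2\,\dif t+\frac{1}{\gamma_1^2}\int_0^T\|\beta^n_t\|_{X^\ast}^2\,\dif t.
\end{equation*}
The first two terms are dominated by $\gamma_1^{-1}(1+2\gamma_2T)\sup_{t\in[0,T]}\|w^n_t\|_H^2$, which tends to $0$ in $\Pr^n$-probability by Theorem~\ref{theorem_lln}, so it remains to handle the third term.

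For the last term I would combine the continuity of $B$ with compactness of the limiting trajectory. The set $\{(u_t,p(t)):t\in[0,T]\}$ is compact in $H\times E$, being the continuous image of $[0,T]$, and $h(t,v,q):=\|B(v,q)-B(u_t,p(t))\|_{X^\ast}^2$ is continuous on $[0,T]\times H\times E$ and vanishes on $\{(t,u_t,p(t)):t\in[0,T]\}$. A short compactness argument --- extract a convergent sequence of times, then use continuity of $t\mapsto(u_t,p(t))$ and of $B$ --- shows that $\rho(\delta):=\sup\{h(t,v,q):t\in[0,T],\ \|v-u_t\|_H^2+\|q-p(t)\|_E^2\le\delta^2\}\to0$ as $\delta\downarrow0$. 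Consequently, on the event where $\sup_{t\in[0,T]}(\|U^n_t-u_t\|_H^2+\|z^n(\Theta^n_t)-p(t)\|_E^2)\le\delta^2$ we have both $\int_0^T\|\beta^n_t\|_{X^\ast}^2\,\dif t\le T\rho(\delta)$ and $\gamma_1^{-1}(1+2\gamma_2T)\sup_t\|w^n_t\|_H^2\le\gamma_1^{-1}(1+2\gamma_2T)\delta^2$, so the right-hand side of the displayed inequality is at most $\gamma_1^{-1}(1+2\gamma_2T)\delta^2+\gamma_1^{-2}T\rho(\delta)$; given $\eps>0$ we choose $\delta$ so small that this is $<\eps^2$, and then Theorem~\ref{theorem_lln} makes the probability of that event tend to one, yielding $\Pr^n[\|U^n-u\|_{L^2((0,T),X)}>\eps]\to0$. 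The one genuinely non-routine point is this last step: in the infinite-dimensional space $H\times E$ continuity of $B$ does not by itself provide a uniform modulus of continuity, so one has to localise around the compact limit trajectory before estimating $\|\beta^n\|$.
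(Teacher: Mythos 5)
Your proof is correct and follows essentially the same route as the paper: test the difference equation against $U^n-u$, apply the energy estimate and Young's inequality, and then let the u.c.p.\ convergence of Theorem~\ref{theorem_lln} kill the right-hand side. The only difference is that where the paper dispatches the term $\int_0^T\|B(U^n_t,z^n(\Theta^n_t))-B(u(t),p(t))\|_{X^\ast}^2\,\dif t$ with a one-line appeal to the Continuous Mapping Theorem, you justify this convergence explicitly via a uniform modulus of continuity of $B$ on a neighbourhood of the compact limit trajectory --- a more careful rendering of the same step, not a different argument.
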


\begin{proof} By definition\footnote{It holds in between jumps for almost all $t$ and as the process is regular, there are almost surely only finitely many jumps. Thus for each $\omega$ the possible set, it does not hold, is a finite union of Lebesgue null sets, hence a null set.} of a weak solution, the following holds almost surely for almost all $t\in[0,T]$,
\begin{eqnarray*}
\lefteqn{\langle\dot U^n_t-\dot u(t), U^n_t-u(t) \rangle_X}\\[1ex]
&=&\langle A\,U^n_t-A\,u(t),U^n_t-u(t)\rangle_X+\langle B(U^n_t,z^n(\Theta_t^n))-B(u(t),p(t)),U^n_t-u(t)\rangle_X\,.
\end{eqnarray*}
The energy estimate yields
\begin{equation*}
\gamma_1\|U^n_t-u(t)\|_X^2\,\leq\,\gamma_2\|U^n_t-u(t)\|_H^2+\|B(U^n_t,z^n(\Theta^n_t))-B(u(t),p(t))\|_{X^\ast}\|U^n_t-u(t)\|_X
\end{equation*}
and due to Young's inequality we get for all $\eps>0$
\begin{equation*}
(\gamma_1-\eps)\,\|U^n_t-u(t)\|_X^2\,\leq\,\gamma_2\|U^n_t-u(t)\|_H^2+\tfrac{1}{\eps}\,\|B(U^n_t,z^n(\Theta^n_t))-B(u(t),p(t))\|_{X^\ast}^2\,.
\end{equation*}
Choosing $\eps$ small enough such that the left hand side of this inequality is positive and integrating, we find that 
\begin{equation*}
\|U^n-u\|_{L^2((0,T),X)}^2\,\leq K\,\|U^n-u\|_{L^2((0,T),H)}^2+K\,\|B(U^n,z^n(\Theta^n))-B(u,p)\|_{L^1((0,T),X^\ast)}^2,
\end{equation*}
for some constant $K$. Here the right hand side is a random variable that converges to zero in probability due to Theorem \ref{theorem_lln} and the Continuous Mapping Theorem. The corollary now follows immediately.
\end{proof}

From Theorem \ref{theorem_lln} we see that all stochasticity of the PDMP sequence is lost in the limit since this theorem provides a connection of the sequence to a deterministic, macroscopic equation. At this point two types of fluctuations come to mind: one can either consider only the internal fluctuations of the piecewise constant component $z^n(\Theta^n)$ or the global fluctuations PDMP sequence. In \cite{RTW} the internal fluctuations were characterised via a central limit theorem for the martingale part of the PDMPs, i.e., the process $M^n$ in \eqref{proof_lln_decomposition}. In this study we consider the global fluctuations of the PDMP sequence around its deterministic limit.


\section{Central Limit Theorem for global fluctuations}\label{section_fluctuation_limit}

In this paper we investigate the fluctuations of a PDMP sequence around its deterministic limit given by the law of large numbers of Theorem \ref{theorem_lln}. Hence we are naturally concerned with the differences $U^n-u$ and $z^n(\Theta^n)-p$ which we have to rescale in order to obtain a non-trivial limit. We thus introduce a real, positive sequence $\alpha_n$, $n\in\mathbb{N}$, necessarily diverging, and make the stochastic process given by
\begin{equation}\label{def_fluctuations_process}
Z^n_t:= \sqrt{\alpha_n}\bigl(U^n_t-u(t),z^n(\Theta^n_t)-p(t)\bigr), \ t\geq 0,
\end{equation}
the central object of interest in our study. Such a rescaled fluctuations process is classical in the study of central limit theorems. The main result we presently establish is that the sequence of processes \eqref{def_fluctuations_process} admits a converging subsequence with limit supported on the space of continuous functions and with a further strengthening the limit is a unique diffusion process, see Theorem \ref{fluctuation_limit_theorem}.\medskip

It is important to note that for central limit theorems in infinite-dimension it is in general not possible to prove the convergence in the norm of the Hilbert space $H\times E$ for which the Law of Large Numbers holds, nor does the limit exist in this space. However, we can obtain convergence in weaker norms: in the literature limits for spatio-temporal models are often considered in the Schwartz space of distributions. We consider the limits in duals of subspaces of $H$ and $E$.
A natural choice for the subspace of $H$ is the space $X$ in the evolution triplet. The convergence will take place in its dual $X^\ast$. In order to keep the presentation simple we restrict ourselves to this choice.
For the limit of the coordinate functions, we introduce a Hilbert space $\cE\hookrightarrow E$ usually obtained by the completion with respect to a different inner product defined on a subspace of $E$. Clearly, together with the duals these spaces form an evolution triplet with $E\subset\cE^\ast$ and the norms satisfy $\|z\|_{\cE^\ast}\leq C\|z\|_E$ for all $z\in E$ for some constant $C$. In the application to neuroscience models $E:=L^2(D)$ and $\cE^\ast$ is given by the dual to some Sobolev space $H^s(D)$. For a more detailed discussion see Remark \ref{Hilbert_scales}. 
When establishing tightness of the sequence \eqref{def_fluctuations_process} we assume for technical matters that there exists a third Hilbert space $V$ satisfying the chain of continuous embeddings
\begin{equation*}
 \cE\hookrightarrow V\hookrightarrow E\hookrightarrow V^\ast\hookrightarrow \cE^\ast.
\end{equation*}

In applications these embeddings of spaces may collapse, i.e., some or all the spaces coincide. This is indeed the case with finite-dimensional Hilbert spaces where all the above spaces coincide with an Euclidean space. This property renders limit theorems in finite dimensions much simpler.

\subsection{The main result on global fluctuations}

We now proceed to our main result. We will establish the Central Limit Theorem under the following set of assumptions that we call Assumptions C. 

\setcounter{ass}{002}
\begin{assumptions}\label{assumptions_CLT} 

\begin{enumerate_ass}

\item\label{ass_C_lipschitz_conds} The operators $A,\,B$ and $F$ satisfy Lipschitz-type conditions, uniformly in $n$, along the stochastic and deterministic solutions: for all $T>0$ 
\begin{eqnarray*}
\lefteqn{\int_0^T\langle A(z^n(\Theta^n_t))\,U^n_t-A(p(t))\,u(t),U^n_t-u(t)\rangle_X
+\langle B(U^n,z^n(\Theta^n))-B(u,p),U^n-u\rangle_X\,\dif t}\nonumber\\ &&\phantom{xxxxxxxxxxxxxxxxxxxxx}\leq\ L_3\int_0^T\|U^n_t-u(t)\|^2_H+
\|z^n(\Theta^n_t)-p(t)\|_{V^\ast}^2\,\dif t,\phantom{xxxxxx}
\end{eqnarray*}

\begin{equation*}
\|F(u,p)-F(U^n,z^n(\Theta^n))\|_{L^2((0,T),V^\ast)}\leq L_4 \|(u,p)-(U^n,z^n(\Theta^n))\|_{L^2((0,T),H\times V^\ast)},
\end{equation*}

\begin{equation*}
\|F(u,p)-F(U^n,z^n(\Theta^n))\|_{L^2((0,T),\cE^\ast)}\leq L_5 \|(u,p)-(U^n,z^n(\Theta^n))\|_{L^2((0,T),H\times \cE^\ast)}
\end{equation*}
almost surely for some constants $L_3,\, L_4$ and $L_5$ which do not depend on $n$ but may depend on $T$ or $(u,p)$.

\item\label{ass_C_IC_bounds} The sequence of initial conditions satisfies
%
%
\begin{equation*}
\sup_{n\in\mathbb{N}}\sqrt{\alpha_n}\,\EX^n\|U^n_0-u_0\|_H+\sup_{n\in\mathbb{N}}\sqrt{\alpha_n}\,\EX^n\|z^n(\Theta^n_0)-p_0\|_{V^\ast} < \infty\,.
\end{equation*}

\item\label{ass_C_generator_diff_bound} For all $T>0$,
\begin{equation}\tag{C3.1}\label{ass_C_generator_diff_bound_1}
\sup_{n\in\mathbb{N}}\alpha_n\,\EX^n\int_0^T\Bigl[\Lambda^n(U^n_s,\Theta^n_s)\int_{K_n}\|z^n(\xi)-z^n(\Theta^n_s)\|_{V^\ast}^2\,\mu^n\bigl((U^n_s,\Theta^n_s),\dif\xi\bigr)\,\dif s\Bigr]\,<\,\infty,
\end{equation}
and
\begin{equation}\tag{C3.2}\label{ass_C_generator_diff_bound_2}
\sup_{n\in\mathbb{N}}\alpha_n\int_0^T\EX^n\big\|\bigl[\mathcal{A}^n\langle z^n_j(\cdot),\,\cdot\,\rangle_V\bigr](U^n_s,\Theta^n_s) -  F(U^n_s,z^n(\Theta^n_s))\big\|_{V^\ast}^2\,\dif s \,<\, \infty\,.
\end{equation}

\item\label{ass_C_tightness} There exists an orthonormal basis $(\varphi_k)_{k\in\mathbb{N}}$ of $\cX\times\cE$ such that for all $\delta>0$ and almost all $t\geq 0$
\begin{equation*}
\lim_{m\to\infty} \sup_{n\in\mathbb{N}}\Pr^n\Bigl[\sum\nolimits_{k>m}|\langle Z^n_t,\varphi_k\rangle_{\cX\times\cE}|^2>\delta\Bigr]=0 \,.
\end{equation*} 

\item\label{ass_C_convergences} For all $\Phi\in\cE$ and all $T>0$,
\begin{equation}\tag{C5.1}\label{ass_C_covariance_convergence}
\lim_{n\to\infty} \int_0^T\EX^n\big|\bigl\langle G(u(s),p(s))\,\Phi,\Phi\bigr\rangle_\cE-\alpha_n\bigl\langle G^n(U^n_s,\Theta^n_s)\,\Phi,\Phi\bigr\rangle_\cE\big|\,\dif s\, =\, 0\,,
\end{equation}
where $G(u(s),p(s))\in L(\cE,\cE^\ast)$, $s\geq 0$, is an integrable family of symmetric, positive, trace class operators, and
\begin{equation*}\tag{C5.2}\label{ass_C_generator_convergence}
\lim_{n\to\infty}\sqrt{\alpha_n}\int_0^T\EX^n\big|\bigl[\mathcal{A}\langle z^n(\cdot),\Phi\rangle_\cE\bigr](U^n_s,\Theta^n_s)-\bigl\langle F(U^n_s,z^n(\Theta^n_s)),\Phi\bigr\rangle_\cE\big|\,\dif s =0\,.
\end{equation*}

\item\label{ass_B_large_fluctuations} For all $\Phi\in\cE$ and all $T,\,c>0$,
\begin{equation}\tag{C6.1}\label{ass_B_large_fluctuations_1}
\lim_{n\to\infty}\,\EX^n\Bigl[\int_0^T\Lambda^n(U^n_s,\Theta^n_s)\int_{\sqrt{\alpha_n}\,|\langle z^n(\xi)-z^n(\Theta_s^n),\Phi\rangle_\cE|>c}\,\mu^n\bigl((U^n_s,\Theta^n_s),\dif\xi\bigr)\,\dif s\Bigr]\,=\, 0
\end{equation}
and the jump heights of the rescaled martingales are almost surely uniformly bounded, i.e., 
\begin{equation}\tag{C6.2}\label{clt_martingale_cond_1}
\sup_{n\in\mathbb{N}}\sup_{t\geq 0}\,\sqrt{\alpha_n}\,\|z^n(\Theta^n_t)-z^n(\Theta^n_{t-})\|_{\cE^\ast}<\infty \tn{ a.s.}
\end{equation}

\item\label{ass_C_differentiability} The operators $A:X\times E\to X^\ast$, $B:X\times E\to X^\ast$ and $F:X\times E\to E$ are partially continuously Gateaux differentiable along the solution $(u,p)$ of the deterministic system and for almost all $s\geq 0$, the Gateaux derivatives evaluated at $(u(s),p(s))$ possess a bounded, Lipschitz continuous extension to $\cX^\ast$, respectively $\cE^\ast$, which we denote by $\ntilde{\phantom{x}}$. For instance, if $B_u[u,p]\in L(X,X^\ast)$ denotes the partial Gateaux derivative of $u\mapsto B(u,p)$ evaluated at $(u,p)\in X\times E$, then $\ntilde B_u[u,p]$ is its extension to $\in L(\cX^\ast,\cX^\ast)$ such that 
\begin{equation*}
\ntilde B_u[u,p] h = B_u[u,p] h  \quad\forall\, h\in X\,. 
\end{equation*}

\item\label{ass_C_uniform_bound} For some $p>2$ and for all $T>0$,
\begin{equation*}
\sup_{n\in\mathbb{N},\,t\leq T}\EX^n\|U^n_t-u(t)\|^p_X+\sup_{n\in\mathbb{N},\,t\leq T}\EX^n\|z^n(\Theta^n_t)-p(t)\|^p_E \,<\,\infty\,.
\end{equation*}

\end{enumerate_ass}
\end{assumptions}

We now state the main result of this study. Remember that
\begin{equation}
Z^n_t:= \sqrt{\alpha_n}\bigl(U^n_t-u(t),z^n(\Theta^n_t)-p(t)\bigr), \ t\geq 0\,.
\end{equation}

\begin{theorem}[Central Limit Theorem for the global fluctuations]\label{fluctuation_limit_theorem} We assume that the \tn{Assumptions \ref{assumptions_LLN}} and  \tn{\ref{assumptions_CLT}} are satisfied. Then the laws of the processes $(Z^n_t)_{t\geq 0}$ are tight in $D(\rr_+,\cX^\ast\times\cE^\ast)$ and any accumulation point $\Pr^\ast$ is supported on the space of continuous functions, i.e.,
\begin{equation}\label{support_of_the_limit_result}
\Pr^\ast\bigl(C(\rr_+,\cX^\ast\times\cE^\ast)\bigr)=1\,. 
\end{equation}
Furthermore, the limit is unique, i.e., the processes $(Z^n_t)_{t\geq 0}$ converge in distribution to an $\cX^\ast\times\cE^\ast$--valued process with characteristic function $\varphi: \rr_+\times \cX\times \cE\to\rr$ satisfying the partial differential equation
\begin{eqnarray}\label{the_complex_equality_in_theorem}
\frac{\partial\varphi}{\partial t}&=& -\frac{1}{2}\bigl\langle G(u(s),p(s))\Phi,\,\Phi\bigr\rangle_\mathcal{E}\,\varphi+ \Bigl \langle\frac{\partial\varphi}{\partial \psi},\ntilde A^\ast(p(s))\,\psi+\ntilde B_u^\ast[u(s),p(s)]\,\psi+\ntilde F_u^\ast[u(s),p(s)]\,\Phi\Bigr\rangle_\cX\nonumber \\[2ex]
&&\mbox{} + \Bigl\langle\frac{\partial\varphi}{\partial\Phi},\ntilde A_p^\ast[u(s),p(s)]\psi+\ntilde B_p^\ast[u(s),p(s)]\,\psi+\ntilde F_p^\ast[u(s),p(s)]\,\Phi\Bigr\rangle_\cE
\end{eqnarray}
where $\varphi$ and its partial derivatives are evaluated at $(t,\psi,\Phi)$, with initial condition $\varphi(0,\psi,\Phi)=\lim_{n\to\infty} \EX^n \e^{i\langle Z_0^n,(\psi,\Phi)\rangle_{\cX\times\cE}}$. Moreover, if the initial conditions $Z^n_0$ converge to a Gaussian random variable on $\cX^\ast\times\cE^\ast$, then the limit is a Gaussian process.
\end{theorem}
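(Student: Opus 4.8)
The plan is to follow the classical three-step scheme for central limit theorems for semimartingales on Hilbert(-ish) spaces: (1) establish tightness of the laws of $(Z^n_t)_{t\geq 0}$ in $D(\rr_+,\cX^\ast\times\cE^\ast)$; (2) show every accumulation point is supported on $C(\rr_+,\cX^\ast\times\cE^\ast)$; (3) identify any accumulation point uniquely via its characteristic function. The starting point is the decomposition \eqref{proof_lln_decomposition} for $z^n(\Theta^n)$ together with the weak-solution identity for $U^n$; subtracting the corresponding equations for $(u,p)$ and multiplying by $\sqrt{\alpha_n}$ yields a closed semimartingale evolution equation for $Z^n$ of the form $Z^n_t = Z^n_0 + \int_0^t (\text{drift}_n) \,\dif s + \sqrt{\alpha_n}\,(0,M^n_t)$, where the rescaled martingale $\sqrt{\alpha_n} M^n$ has predictable quadratic variation governed by $\alpha_n G^n(U^n_s,\Theta^n_s)$. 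Assumptions \assref{C}{ass_C_lipschitz_conds}, \assref{C}{ass_C_generator_diff_bound} and \assref{C}{ass_C_IC_bounds} give the uniform-in-$n$ moment bounds needed to control the drift and the martingale increments; \assref{C}{ass_C_tightness} provides the flatness/compactness condition in the weak topology; and \assref{C}{ass_B_large_fluctuations} controls the large jumps of the rescaled martingale.

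For step (1) I would use a Jakubowski-type tightness criterion in $D(\rr_+,\cX^\ast\times\cE^\ast)$: show that for each fixed $\Phi$ in a dense set the real-valued processes $\langle Z^n_t,\Phi\rangle$ are tight in $D(\rr_+,\rr)$ (via Aldous's criterion, using the semimartingale decomposition, the moment bounds from \assref{C}{ass_C_lipschitz_conds}–\assref{C}{ass_C_generator_diff_bound} for the drift, and \eqref{ass_C_generator_diff_bound_1}, \eqref{ass_C_covariance_convergence} for the martingale part), together with the compact-containment/flatness condition \assref{C}{ass_C_tightness} that upgrades finite-dimensional tightness to tightness of the Hilbert-space-valued processes. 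The intermediate space $V$ enters here: the drift naturally lives in $V^\ast$ (that is why the Lipschitz and generator bounds \eqref{ass_C_generator_diff_bound_1}, \eqref{ass_C_generator_diff_bound_2} are posed in $V^\ast$), the martingale part converges in $\cE^\ast$, and the embedding $V^\ast\hookrightarrow\cE^\ast$ reconciles the two; for the $U$-component the energy structure of the evolution triplet plays the analogous role. For step (2), continuity of the limit follows because the jumps of $Z^n$ come only from the martingale $\sqrt{\alpha_n} M^n$, and by \eqref{ass_B_large_fluctuations_1} the compensated large jumps vanish in the limit while \eqref{clt_martingale_cond_1} keeps the remaining jumps uniformly bounded; hence $\sup_{t}\|\Delta Z^n_t\|_{\cE^\ast}\to 0$ in probability and the standard criterion (e.g. via the modulus of continuity or $J_1$-to-uniform upgrade) gives \eqref{support_of_the_limit_result}.

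For step (3), the uniqueness of the limit, I would apply Dynkin's formula \eqref{Dynkin_formula} to the exponential functionals $f(u,\theta)=\exp(i\langle\psi,u\rangle_H + i\langle\Phi,z(\theta)\rangle_E)$ from Remark \ref{exponential_function_props} evaluated along $(U^n,\Theta^n)$, rescale, expand to second order in $\sqrt{\alpha_n}$ (Taylor expanding the increments of the jump terms), and pass to the limit: the first-order terms produce, via the differentiability Assumption \assref{C}{ass_C_differentiability} and the linearization convergences \eqref{ass_C_generator_convergence}, the drift terms in \eqref{the_complex_equality_in_theorem} involving the adjoint Gateaux derivatives $\ntilde A^\ast,\ntilde B_u^\ast,\ntilde F_u^\ast$ etc.; the second-order jump term produces, via \eqref{ass_C_covariance_convergence}, the quadratic form $-\tfrac12\langle G(u(s),p(s))\Phi,\Phi\rangle_\cE$; and the higher-order remainders vanish by \eqref{ass_B_large_fluctuations_1} and \assref{C}{ass_C_uniform_bound} (the $p>2$ moments give the uniform integrability needed to exchange limit and expectation). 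This shows that the characteristic function $\varphi(t,\psi,\Phi)=\lim_n\EX^n e^{i\langle Z^n_t,(\psi,\Phi)\rangle}$ of any accumulation point solves \eqref{the_complex_equality_in_theorem} with the prescribed initial datum; since \eqref{the_complex_equality_in_theorem} is a linear first-order (transport-type) PDE in $(\psi,\Phi)$ with coefficients that are affine and Lipschitz (by \assref{C}{ass_C_differentiability}), it has a unique solution, so the accumulation point is unique and $Z^n\Rightarrow Z$. Finally, if $Z^n_0$ converges to a Gaussian, the initial datum $\varphi(0,\cdot)$ is the characteristic function of a Gaussian, and one checks directly that a Gaussian ansatz $\varphi(t,\psi,\Phi)=\exp(i\langle m(t),(\psi,\Phi)\rangle - \tfrac12 \langle Q(t)(\psi,\Phi),(\psi,\Phi)\rangle)$ solves \eqref{the_complex_equality_in_theorem} (reducing it to a linear ODE for the mean $m(t)$ and a Lyapunov-type equation for the covariance $Q(t)$), so by uniqueness the limit is Gaussian. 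I expect the main obstacle to be step (1), specifically proving tightness in the correct weak topology: controlling the drift in $V^\ast$ uniformly in $n$ while simultaneously handling the $U$-component through the evolution-triplet energy estimates, and verifying that Assumption \assref{C}{ass_C_tightness} genuinely upgrades the one-dimensional Aldous estimates to tightness of the infinite-dimensional processes — this is where the interplay of the four spaces $\cE\hookrightarrow V\hookrightarrow E\hookrightarrow V^\ast\hookrightarrow\cE^\ast$ and the evolution triplet $X\hookrightarrow H\hookrightarrow X^\ast$ must be managed carefully.
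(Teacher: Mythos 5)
Your overall architecture — tightness via an Aldous/Ethier--Kurtz criterion combined with the compact-containment condition \assref{C}{ass_C_tightness}, continuity of the limit from the vanishing of the rescaled martingale jumps, and identification by applying Dynkin's formula to the exponential functionals and Taylor-expanding the jump part — is the same as the paper's, and your treatment of the Gaussian case (reducing \eqref{the_complex_equality_in_theorem} to an ODE for the mean and a Lyapunov-type operator equation for the covariance) matches the paper's Section on the Gaussian solution. However, there is one genuine gap in your step (3): showing that the one-dimensional characteristic functions $\varphi(t,\psi,\Phi)=\lim_n\EX^n\e^{i\langle Z^n_t,(\psi,\Phi)\rangle_{\cX\times\cE}}$ are uniquely determined by \eqref{the_complex_equality_in_theorem} does \emph{not} yet identify the accumulation point as a probability measure on $D(\rr_+,\cX^\ast\times\cE^\ast)$: two distinct processes can share all one-dimensional marginals. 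You cannot conclude ``the accumulation point is unique and $Z^n\Rightarrow Z$'' at that stage; you must characterise all finite-dimensional distributions. The paper closes this gap in its concluding subsection by exploiting the Markov property of $(Z^n,u,p)$: writing the two-dimensional characteristic function as $\EX^n[h(Z^n_{t_1})\,g^n(Z^n_{t_1},\dots)]$ with $g^n$ the conditional characteristic function, extending $g^n$ off the attainable set $\mathcal{K}_n$, invoking the continuous-convergence version of the mapping theorem to pass to the limit, and then showing via the flow of \eqref{banach_ode_pt1}--\eqref{banach_ode_pt2} that the resulting expression is determined by the one-dimensional theory; the general case follows by induction. The same issue affects your final claim that the limit is a Gaussian \emph{process}: Gaussian one-dimensional marginals do not make a process Gaussian, and the paper again needs the finite-dimensional argument (formula \eqref{final_form_twodim_char_dist}) to conclude joint Gaussianity.

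Two smaller remarks. First, your tightness route (Aldous for the scalar projections $\langle Z^n_t,\Phi\rangle$, upgraded by \assref{C}{ass_C_tightness}) differs slightly from the paper, which verifies the modulus-of-continuity condition directly for the $H\times\cE^\ast$-norm of the increments via Gronwall estimates on $\|Z^n_{t+s}-Z^n_t\|$ and the trace of the quadratic variation; both are viable, but the paper's version is the one for which Lemma \ref{lemma_uniform_integrable} and the conditional trace identity are tailored. Second, in the identification step the passage from weak convergence of the integrands to convergence of their expectations, and the interchange of expectation with the Fr\'echet derivative of $h$, require the uniform integrability supplied by Lemma \ref{lemma_uniform_integrable} (not only the $p>2$ moments of \assref{C}{ass_C_uniform_bound}, which the paper uses chiefly to kill the Taylor remainder terms $\eps_{A,n},\eps_{B,n},\eps_{F,n}$); your proposal gestures at this but should make the two uses explicit.
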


\begin{remark}  It is easy to see that Theorem \ref{fluctuation_limit_theorem} is an extension to previous results considering PDMPs on Euclidean spaces. In finite dimensions linear operators are matrices with the adjoint operator given by the matrix transpose and thus equation \eqref{the_complex_equality_in_theorem} reduces to the equation reported in \cite{Wainrib1}. Furthermore, reducing to piecewise constant finite dimensional processes, i.e., $A=B=0$, we obtain the equation as in Kurtz \cite{Kurtz2}. 

We stated that in previous work we presented a limit for the internal fluctuations of the PDMPs (cf.~\cite[Thm.~5.1]{RTW}). The present Assumptions \ref{assumptions_CLT} are stronger than the assumptions needed for that limit theorem (if we assume that the embedding of $\cE\in V$ is of Hilbert-Schmidt type as in applications is always the case). Hence under the Assumptions \ref{assumptions_CLT} it follows that the $\cE^\ast$--valued martingales $(\sqrt{\alpha_n}\,M_t^n)_{t\geq 0}$ converge weakly to an $\cE^\ast$--valued centred, continuous Gaussian process with independent increments characterised by the covariance operator
\begin{equation}
C(t)=\int_0^t G(u(s),p(s))\circ\iota^{-1}_\cE\,\dif s \qquad\forall\,t \geq 0\,. 
\end{equation}
Finally, in \cite{RiedlerPhD,RTW} a second version of the martingale central limit theorem with slightly different conditions was considered, i.e., a marginally stronger condition instead of \eqref{ass_B_large_fluctuations_1} and a marginally weaker condition instead of \eqref{clt_martingale_cond_1}. We note that Theorem \ref{fluctuation_limit_theorem} remains valid if the Assumptions \ref{assumptions_CLT} are changed accordingly.

\end{remark}

It is important to note that if the limit is a Gaussian process, it may be identified with the mild solution of a linear stochastic partial differential equation with additive noise, i.e., a generalised infinite dimensional Ornstein-Uhlenbeck process.

\begin{corollary}\label{corollary_spde_representation} Assume that \tn{Theorem \ref{fluctuation_limit_theorem}} holds and the initial conditions $Z^n_0$ converge to a Gaussian limit $(U_0,P_0)\in \cX^\ast\times\cE^\ast$. Then the limiting process is a version of the mild solution to the linear, non-autonomous stochastic evolution equation
\begin{equation}\label{general_spde}
\left.\begin{array}{rcl}
\dif U_t &=& \Bigl[\bigl(\ntilde A(p(t))+\ntilde B_u[u(t),p(t)]\bigr) U_t + \ntilde F_u[u(t),p(t)] P_t\Bigr]\,\dif t\\[2ex] 
\dif P_t &=& \Bigl[\bigl(\ntilde A_p[p(t)] +\ntilde B_p[u(t),p(t)]\bigr) U_t + \ntilde F_p[u(t),p(t)] P_t\Bigr]\,\dif t + g(t)\,\dif W^Q_t 
\end{array}\right.
\end{equation}
with initial condition $(U_0,P_0)$, where $W^Q$ is a Wiener process on a Hilbert space $H'$ with covariance $Q\in L(H')$ and $g(t)\in L(H',\cE^\ast)$ such that $g(t)Qg^\ast(t)=G(u(t),p(t))\circ\iota^{-1}_\cE$.
\end{corollary}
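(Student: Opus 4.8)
The plan is to first construct the candidate mild solution to \eqref{general_spde} explicitly and then verify that its law coincides with the limit identified in Theorem \ref{fluctuation_limit_theorem} by matching characteristic functions. The preparatory step is to set up the linear, non-autonomous deterministic problem underlying \eqref{general_spde}. Write the drift as a time-dependent linear operator $\mathcal{L}(t)$ on $\cX^\ast\times\cE^\ast$ with the block structure appearing on the right-hand side of \eqref{general_spde}; by Assumption \assref{C}{ass_C_differentiability} the entries $\ntilde A(p(t))$, $\ntilde B_u[u(t),p(t)]$, $\ntilde F_u[u(t),p(t)]$, $\ntilde A_p[p(t)]$, $\ntilde B_p[u(t),p(t)]$, $\ntilde F_p[u(t),p(t)]$ are bounded and Lipschitz (hence in particular strongly measurable and locally bounded in $t$) as maps on $\cX^\ast$, respectively $\cE^\ast$. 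First I would invoke the standard theory of non-autonomous linear evolution equations (e.g.\ the construction of an evolution system / propagator $S(t,r)$, $0\le r\le t$, associated with $\mathcal{L}(\cdot)$, cf.\ \cite{Evans} or the Appendix) to obtain a strongly continuous two-parameter family $S(t,r)\in L(\cX^\ast\times\cE^\ast)$ solving $\partial_t S(t,r)=\mathcal{L}(t)S(t,r)$, $S(r,r)=\mathrm{Id}$. Here the operators are genuinely bounded, so the evolution system exists by a Picard/Dyson-series argument and no unbounded-generator subtleties arise.

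Next I would define the candidate process by the variation-of-constants (mild) formula
\begin{equation*}
(U_t,P_t):=S(t,0)(U_0,P_0)+\int_0^t S(t,r)\,\bigl(0,\,g(r)\,\dif W^Q_r\bigr)\,,
\end{equation*}
where the stochastic convolution is the $\cX^\ast\times\cE^\ast$-valued Itô integral against the $H'$-valued $Q$-Wiener process $W^Q$; its well-definedness follows from $\int_0^T \trace\!\bigl(S(t,r)(0,g(r))Q(0,g(r))^\ast S(t,r)^\ast\bigr)\dif r<\infty$, which is guaranteed because $g(r)Qg^\ast(r)=G(u(r),p(r))\circ\iota^{-1}_\cE$ is trace class and integrable in $r$ by the hypothesis on $G$ in \assref{C}{ass_C_convergences}, and $S$ is uniformly bounded on $[0,T]$. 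This process is by construction Gaussian whenever $(U_0,P_0)$ is Gaussian, since it is an affine image of a Gaussian datum plus a Gaussian stochastic integral. I would then compute its characteristic function $\varphi(t,\psi,\Phi):=\EX\,\e^{i\langle(U_t,P_t),(\psi,\Phi)\rangle_{\cX\times\cE}}$ using the Gaussianity: $\varphi(t,\psi,\Phi)=\exp\!\bigl(i\langle m(t),(\psi,\Phi)\rangle-\tfrac12\langle \mathrm{Cov}(t)(\psi,\Phi),(\psi,\Phi)\rangle\bigr)$ with mean $m(t)=S(t,0)\EX(U_0,P_0)$ and covariance built from $S$ and $G$. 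Differentiating this expression in $t$, using $\partial_t S(t,0)=\mathcal{L}(t)S(t,0)$ and $\frac{\dif}{\dif t}\mathrm{Cov}(t)=\mathcal{L}(t)\mathrm{Cov}(t)+\mathrm{Cov}(t)\mathcal{L}(t)^\ast+(0,g(t))Q(0,g(t))^\ast$, one checks directly that $\varphi$ solves \eqref{the_complex_equality_in_theorem}: the quadratic term produces $-\tfrac12\langle G(u(t),p(t))\Phi,\Phi\rangle_\cE\,\varphi$ (the first component contributes nothing since the noise acts only on $P$), and the $\mathcal{L}(t)^\ast$-terms acting on the argument $(\psi,\Phi)$ reproduce exactly the $\langle \partial\varphi/\partial\psi,\cdot\rangle_\cX$ and $\langle\partial\varphi/\partial\Phi,\cdot\rangle_\cE$ expressions with the adjoint blocks $\ntilde A^\ast(p(t))\psi+\ntilde B_u^\ast[u(t),p(t)]\psi+\ntilde F_u^\ast[u(t),p(t)]\Phi$ and $\ntilde A_p^\ast[u(t),p(t)]\psi+\ntilde B_p^\ast[u(t),p(t)]\psi+\ntilde F_p^\ast[u(t),p(t)]\Phi$. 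Finally, since $Z^n_0\to(U_0,P_0)$, the initial datum $\varphi(0,\psi,\Phi)=\EX\,\e^{i\langle(U_0,P_0),(\psi,\Phi)\rangle}$ matches $\lim_n\EX^n\e^{i\langle Z^n_0,(\psi,\Phi)\rangle_{\cX\times\cE}}$, so by the uniqueness assertion of Theorem \ref{fluctuation_limit_theorem} the law of $(U_t,P_t)_{t\ge0}$ coincides with that of the limit process, which completes the proof.

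I expect the main obstacle to be purely bookkeeping rather than conceptual: correctly identifying the adjoint block operators and the duality pairings so that the term-by-term match with \eqref{the_complex_equality_in_theorem} is exact. In particular one must be careful that the Gateaux derivatives in \eqref{general_spde} act as operators into $\cX^\ast$ (resp.\ $\cE^\ast$) while in \eqref{the_complex_equality_in_theorem} their adjoints act on test elements $\psi\in\cX$, $\Phi\in\cE$; reconciling these requires the identification of $\cE^\ast$ with the dual via $\iota_\cE$ and tracking where $\iota^{-1}_\cE$ enters the covariance (which is precisely the reason the hypothesis demands $g(t)Qg^\ast(t)=G(u(t),p(t))\circ\iota^{-1}_\cE$ rather than simply $G(u(t),p(t))$). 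A secondary technical point is justifying the differentiation of $\mathrm{Cov}(t)$ and the validity of the stochastic Fubini / Itô-isometry computations in the triple $\cE\hookrightarrow V\hookrightarrow E\hookrightarrow V^\ast\hookrightarrow\cE^\ast$, but since all operators involved are bounded and $G$ is trace class and integrable, these are routine applications of the theory of Gaussian processes and stochastic convolutions on Hilbert spaces (cf.\ \cite{Evans} and the references on infinite-dimensional stochastic evolution equations).
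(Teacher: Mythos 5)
Your proposal follows essentially the same route as the paper: construct the two-parameter evolution system $S(s,t)$ generated by the bounded linearised drift, write the candidate process via the variation-of-constants (mild solution) formula, observe it is Gaussian, compute its mean and covariance, differentiate in $t$, and match against the characterisation of the limit from Theorem \ref{fluctuation_limit_theorem}. The only cosmetic difference is that the paper performs the matching at the level of the mean and covariance ODEs \eqref{banach_ode_pt1}--\eqref{banach_ode_pt2} (which it had already shown to be equivalent, for Gaussian laws, to the characteristic-function equation \eqref{the_complex_equality_in_theorem}), whereas you substitute the Gaussian characteristic function into \eqref{the_complex_equality_in_theorem} directly; this is the same computation.

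The one step you gloss over is the passage from one-dimensional marginals to the law of the process. Equation \eqref{the_complex_equality_in_theorem} determines only the one-dimensional characteristic functions, so verifying that the mild solution of \eqref{general_spde} satisfies it does not by itself identify the law of $(U_t,P_t)_{t\geq 0}$ with that of the limit; two processes can share all one-dimensional marginals without being versions of one another. The paper closes this by noting that the mild solution of \eqref{general_spde} is a Markov process, so its two-dimensional characteristic functions take exactly the form \eqref{final_form_twodim_char_dist} derived for the limit in Section \ref{section_proof_conclusion}, and all higher finite-dimensional distributions then agree by induction. You should add this short argument; with it your proof is complete.
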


\begin{remark} It is always possible to find operators $g$ in \eqref{general_spde}. For instance, choose $U=\cE^\ast$ and $Q=I$ being the identity. Then $g(t)$ is the unique square root of the trace class operator $G(u(t),p(t))\circ\iota^{-1}_\cE$.
\end{remark}

\subsection{Discussion of the assumptions}\label{discussion_of_assumptions}

Before proceeding to the proof of Theorem \ref{fluctuation_limit_theorem}, we discuss the set of Assumptions C that we have chosen to rely on.

\subsubsection{Proving tightness under Assumptions \ref{assumptions_CLT}}

In the sequel we prove convergence in distribution of a sequence of c\`adl\`ag processes. This is a well established question in Probability Theory for which sufficient conditions as well as the plan of the proof are well identified: one first proves tightness of the given sequence and then shows that the set of accumulation points is a singleton using some characterisation. Tightness of probability measures on the space of c\`adl\`ag functions taking values in $\cX^\ast\times\cE^\ast$ is equivalent to tightness of the sequences of marginals (in a dense subset of $\rr_+$) and convergence in probability for the mode of continuity. These properties are classically obtained by checking the conditions \assref{T}{ass_T1}--\assref{T}{ass_T3} below. The first property is implied by \assref{T}{ass_T1} and \assref{T}{ass_T2} (which are also necessary), the second one is implied by condition \assref{T}{ass_T3} due to theorem \cite[Thm.~8.6, Chap.~3]{EthierKurtz}.

\setcounter{ass}{020}
\begin{enumerate_ass}
\item\label{ass_T1} There exists an increasing sequence of finite dimensional subspaces $S_k$, $k\in\mathbb{N}$ of $\cX^\ast\times\cE^\ast$ such that $\lim_{k\to\infty} \pi_k x =x$ for all $x\in \cX^\ast\times\cE^\ast$, where $\pi_k$ denotes the projection onto $S_k$, and for every $\eps,\,\delta>0$ and all $t$ out of a dense subset of $\rr_+$, there exists $S_m$ out of $(S_k)_{k\in\mathbb{N}}$ satisfying 
\begin{equation*}
\sup_{n\in\mathbb{N}} \Pr^n\bigl[\,\inf\nolimits_{y\in S_m}\|Z^n_t-y\|_{\cX^\ast\times\cE^\ast}>\delta\bigr]\leq\eps\,. 
\end{equation*}

\item\label{ass_T2} For all $\eps>0$ and all $t$ out of a dense subset of $\rr_+$, there exists a $\delta>0$ such that
\begin{equation*}
\sup_{n\in\mathbb{N}}\,\Pr^n[\,\|Z_t^n\|_{\cX^\ast\times\cE^\ast}>\delta]\leq \eps\,. 
\end{equation*}

\item\label{ass_T3} For every $T>0$, all $\Delta t\in(0,1)$ all $n\in\mathbb{N}$, there exist non-negative random variables $\delta^n(\Delta t)$ such that for all $0\leq t<T$ and $0<s\leq\Delta t$
\begin{equation}\label{inequality_T3}
\EX^n\Bigl[\big\|Z^n_{t+s}-Z^n_t\big\|_{\cX^\ast\times\cE^\ast}^2\,\Big|\,\sF^n_t\Bigr] \,\leq\,\EX^n\bigl[\delta^n(\Delta t)\big|\,\sF^n_t\Bigr]
\end{equation}
and
\begin{equation}\label{conditaion_on_delta_T3}
\lim_{\Delta t\to 0} \limsup_{n\to\infty} \EX^n[\delta^n(\Delta t)]=0\,. 
\end{equation}
\end{enumerate_ass}
\setcounter{ass}{003}

However in the present paper we found it more natural to work with the set of Assumptions C. We now show that Assumptions C imply \assref{T}{ass_T1}--\assref{T}{ass_T3}: 

To this end we first note that if $S_n$ are the spaces spanned by an increasing set of elements of the orthonormal basis obtained from the Riesz Representation of the orthonormal basis $(\varphi_k)_{k\in\mathbb{N}}$ in $\cX\times\cE$ then, assumption \assref{C}{ass_C_tightness} is just a reformulation of condition \assref{T}{ass_T1}. To proceed we need the following lemma, which is also of later use.

\begin{lemma}\label{lemma_uniform_integrable} Assume that \tn{\assref{C}{ass_C_lipschitz_conds}} holds. Then there exist constants $K_1,\,K_2$ independent of $n\in\mathbb{N}$ and $t\leq T$ such that for all $n\in\mathbb{N}$ and $t\leq T$,
\begin{eqnarray}\label{proof_tightness_gronwall_est}
\lefteqn{\alpha_n\,\EX^n\Bigl[\|U^n_t-u(t)\|_H^2+\|z^n(\Theta^n_t)-p(t)\|_{V^\ast}^2\Bigr]}\nonumber\\[1ex]
&\leq& \Bigl(\alpha_n\,\EX^n\|U^n_0-u_0\|_{H}^2+\alpha_n\,\EX^n\|z^n(\Theta^n_0)-p_0\|^2_{V^\ast} +\alpha_n\,\EX^n\sup_{s\in[0,t]}\|M^n_s\|_{V^\ast}^2\Bigr)\,K_1\e^{K_2t}\\
&&\mbox{}+ \Bigl(\alpha_n\,\EX^n\int_0^t\bigl\|\mathcal{A}^n\bigl(\langle\cdot,z^n(\cdot)\rangle_E\bigr)(U^n_s,\Theta^n_s) - F(U^n_s,z^n(\Theta^n_s))\bigr\|_{V^\ast}^2\,\dif s \Bigr)\,t K_1\e^{K_2t}\,.\nonumber
\end{eqnarray}
Assume moreover  that \tn{\assref{C}{ass_C_IC_bounds}} and \tn{\assref{C}{ass_C_generator_diff_bound}} also hold. Then the family of random variables
\begin{equation}\label{uniform_integrability_family}
\Bigl\{ \sqrt{\alpha_n}\|U^n_t-u(t)\|_{H}+\sqrt{\alpha_n}\|z^n(\Theta^n_t)-p(t)\|_{V^\ast};\, t\in [0,T], n\in\mathbb{N}\Bigr\}
\end{equation}
is uniformly integrable for every $T>0$.
\end{lemma}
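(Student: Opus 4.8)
The plan is to establish the Gronwall-type inequality \eqref{proof_tightness_gronwall_est} first and then deduce uniform integrability from it. To derive the inequality, I would start from the decomposition \eqref{proof_lln_decomposition} for $z^n(\Theta^n)$ together with the defining weak formulation of $U^n$ as a solution of \eqref{abstract_evolution_equation_fam} (which holds between jumps, and hence for almost all $t$ since the process is regular, as noted in the footnote to the Corollary). Subtracting the deterministic system \eqref{det_limit_process}, I would test the $U$-equation against $U^n_t - u(t)$ in the duality pairing $\langle\cdot,\cdot\rangle_X$ and use the standard identity $\frac{\dif}{\dif t}\tfrac12\|U^n_t-u(t)\|_H^2 = \langle \dot U^n_t - \dot u(t), U^n_t - u(t)\rangle_X$ valid on an evolution triplet. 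For the $p$-component I would write $z^n(\Theta^n_t) - p(t)$ as the sum of the initial-condition difference, the time-integral of $\mathcal{A}^n\langle\cdot,z^n(\cdot)\rangle_E - F$ evaluated along the process plus the integral of $F(U^n,z^n(\Theta^n)) - F(u,p)$, and the martingale $M^n_t$; then I would bound $\|z^n(\Theta^n_t)-p(t)\|_{V^\ast}^2$ by four times the squared norms of these four terms (Cauchy–Schwarz on the time integral giving a factor $t$).

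The two key analytic inputs are assumption \assref{C}{ass_C_lipschitz_conds}: the first displayed Lipschitz-type inequality controls the combined $A$- and $B$-terms after integration in time by $L_3\int_0^t \|U^n_s-u(s)\|_H^2 + \|z^n(\Theta^n_s)-p(s)\|_{V^\ast}^2\,\dif s$, and the second inequality controls $\|F(u,p)-F(U^n,z^n(\Theta^n))\|_{L^2((0,T),V^\ast)}$ by $L_4$ times the $L^2$-norm of the difference in $H\times V^\ast$. Adding the estimates for the two components, multiplying by $\alpha_n$, taking expectations, and bounding the martingale integral crudely by $\alpha_n\EX^n\sup_{s\le t}\|M^n_s\|_{V^\ast}^2$, I obtain an inequality of the form $\phi_n(t) \le C_n + K_2\int_0^t \phi_n(s)\,\dif s$ where $\phi_n(t) := \alpha_n\EX^n[\|U^n_t-u(t)\|_H^2 + \|z^n(\Theta^n_t)-p(t)\|_{V^\ast}^2]$ and $C_n$ is (a constant multiple of) the sum of the three bracketed terms on the right of \eqref{proof_tightness_gronwall_est}. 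Gronwall's lemma then yields \eqref{proof_tightness_gronwall_est} with suitable $K_1$, $K_2$ independent of $n$ and $t\le T$; one has to be a little careful to keep the $t$-prefactor on the generator-difference term (it comes from the Cauchy–Schwarz factor and is not absorbed into the exponential, since that term does not re-enter the Gronwall iteration) and to note $\phi_n$ is finite by the square-integrability of $M^n$ and the $L^p$-bound \assref{C}{ass_C_uniform_bound} with $p>2$, which also makes the interchange of expectation and time-integral legitimate.

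For the uniform integrability claim, the point is that the right-hand side of \eqref{proof_tightness_gronwall_est} is bounded uniformly in $n$ and in $t\in[0,T]$: the first two terms in the bracket are finite by \assref{C}{ass_C_IC_bounds} (noting $\EX^n\|\cdot\|^2 \le (\EX^n\|\cdot\|)^{\text{-bound squared}}$ is not quite it — rather one uses that \assref{C}{ass_C_IC_bounds} gives a uniform bound on $\sqrt{\alpha_n}\EX^n\|U^n_0-u_0\|_H$, and combined with \assref{C}{ass_C_uniform_bound} for the second moment, or directly if one reads \assref{C}{ass_C_IC_bounds} as controlling the relevant quantity — I would check which reading the authors intend and, if needed, invoke \assref{C}{ass_C_uniform_bound} to pass from first to second moments), the $\sup$-of-martingale term is handled by Doob's maximal inequality together with \eqref{ass_C_generator_diff_bound_1} which bounds $\alpha_n\EX^n\|M^n_T\|_{V^\ast}^2$ via the trace formula for the quadratic variation, and the last term is exactly \eqref{ass_C_generator_diff_bound_2}. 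Hence $\sup_{n,\,t\le T}\phi_n(t) < \infty$, i.e. the family $\{\sqrt{\alpha_n}\|U^n_t-u(t)\|_H + \sqrt{\alpha_n}\|z^n(\Theta^n_t)-p(t)\|_{V^\ast}\}$ is bounded in $L^2$; since $L^2$-boundedness (with exponent strictly greater than $1$) implies uniform integrability, the claim follows.

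\textbf{Main obstacle.} The routine part is the Gronwall bookkeeping; the genuinely delicate step is justifying the testing identity $\frac{\dif}{\dif t}\tfrac12\|U^n_t-u(t)\|_H^2 = \langle\dot U^n_t-\dot u(t),U^n_t-u(t)\rangle_X$ uniformly across the (finitely many, a.s.) jump times of $\Theta^n$ — the continuous component $U^n$ has continuous paths and solves the evolution equation in the weak $L^2((0,T),X)\cap H^1((0,T),X^\ast)$ sense on each inter-jump interval, so the identity holds a.e. on each such interval and the $H$-norm is absolutely continuous in $t$ overall; assembling this into a clean a.s. differential inequality on $[0,T]$ (and then taking expectations, which requires the integrability from \assref{C}{ass_C_uniform_bound} to move $\EX^n$ inside the time integral) is where the care is needed. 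A secondary subtlety is ensuring every constant produced is genuinely $n$-independent, which rests entirely on the uniformity built into \assref{C}{ass_C_lipschitz_conds}.
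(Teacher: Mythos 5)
Your proposal is correct and follows essentially the same route as the paper: an energy/Gronwall estimate built from the first two Lipschitz conditions in \assref{C}{ass_C_lipschitz_conds} and the decomposition \eqref{proof_lln_decomposition}, with the Cauchy--Schwarz factor $t$ kept on the generator-difference term, followed by bounding the right-hand side via \assref{C}{ass_C_IC_bounds}, Doob's inequality together with \eqref{ass_C_generator_diff_bound_1} for the martingale supremum, and \eqref{ass_C_generator_diff_bound_2} for the last term, and finally uniform integrability from the uniform second-moment bound (the paper invokes de la Vall\'ee-Poussin, which is equivalent to your $L^2$-boundedness argument). Your side remark about \assref{C}{ass_C_IC_bounds} is well taken: the paper's proof also cites that assumption to bound the second moments $\alpha_n\,\EX^n\|U^n_0-u_0\|_H^2$ although as written it only controls first moments, so the assumption is evidently intended to be read (or strengthened) in the second-moment form.
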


We postpone the proof of Lemma \ref{lemma_uniform_integrable} to the next section and prove now that \assref{T}{ass_T2} and \assref{T}{ass_T3} hold under Assumptions C. 
We see that Lemma \ref{lemma_uniform_integrable} combined with the continuous embeddings $H\hookrightarrow\cX^\ast$ and $V^\ast\hookrightarrow\cE^\ast$, and the inequality $\sqrt{(a^2+b^2)}\leq |a|+|b|$, implies that also the set $\{\|Z^n_t\|_{\cX^\ast\times\cE^\ast};\, t\in[0,T],n\in\mathbb{N}\}$ is uniformly integrable. Thus there exists an increasing, non-negative, convex function $g:[0,\infty)\to\rr_+$ such that $\lim_{x\to\infty} g(x)/x=\infty$ and
\begin{equation}
 \sup_{t\in[0,T],\,n\in\mathbb{N}} \EX^n\bigl[g(\|Z^n_t\|_{\cX^\ast\times\cE^\ast})\bigr]\,<\infty,
\end{equation}
\cite[Appendix, Prop.~2.1]{EthierKurtz}. Now, using Chebyshev's inequality we obtain
\begin{equation*}
\Pr^n[\,\|Z_t^n\|_{\cX^\ast\times\cE^\ast}>\delta]\leq\frac{1}{g(\delta)}\,\EX^n\bigl[g(\|Z^n_t\|_{\cX^\ast\times\cE^\ast})\bigr]\,.
\end{equation*}
Taking the supremum over all $n\in\mathbb{N}$ in the right hand side of this inequality and choosing $\delta$ sufficiently large yields \assref{T}{ass_T2}.\medskip

Moreover, using the norm of $H\times\cE^\ast$ yields
\begin{eqnarray*}
\lefteqn{\big\|Z^n_{t+s}-Z^n_t\big\|_{H\times\cE^\ast}^2}\\[2ex]
& =& \ \alpha_n\,\|U^n_{t+s}-u(t+s)-U^n_t+u(t)\|_H^2+\alpha_n\,\|z^n(\Theta^n_{t+s})-p(t+s)-z^n(\Theta^n_t)-p(t)\|_{\cE^\ast}^2.
\end{eqnarray*}
Next the estimation techniques used to prove \cite[Thm.~4.1]{RTW} yield the inequalities
\begin{eqnarray}\label{Kurtz_aldous_est_1}
\lefteqn{\|U^n_{t+s}-u(t+s)-U^n_t+u(t)\|_H^2}\\[1ex]
&\leq& 4L_3\int_t^{t+s}\|U^n_{r}-u(r)-U^n_t+u(t)\|_H^2+\|z^n(\Theta^n_{r})-p(r)-z^n(\Theta^n_t)-p(t)\|_{V^\ast}^2\,\dif r\nonumber\\[1ex]
&&\mbox{} + 4sL_3\,\Bigl(\|U^n_t-u(t)\|_H^2+\|z^n(\Theta^n_t)-p(t)\|_{V^\ast}^2\Bigr),\nonumber
\end{eqnarray}
and
\begin{eqnarray}\label{Kurtz_aldous_est_2}
\lefteqn{\|z^n(\Theta^n_{t+s})-p(t+s)-z^n(\Theta^n_t)-p(t)\|_{\cE^\ast}^2}\\[1ex]
&\leq& 6L_5\int_t^{t+s}\|U^n_{r}-u(r)-U^n_t+u(t)\|_H^2+\|z^n(\Theta^n_{r})-p(r)-z^n(\Theta^n_t)-p(t)\|_{\cE^\ast}^2\,\dif r\nonumber\\[1ex]
&&\mbox{} +3\,\|M^n_{t+s}-M^n_t\|_{\cE^\ast}^2 + 3sL_5\Bigl(\|U^n_t-u(t)\|_H^2+\|z^n(\Theta^n_t)-p(t)\|_{\cE^\ast}^2\Bigr)\nonumber\\[1ex]
&&\mbox{} + 3s\int_t^{t+s}\bigl\|\mathcal{A}^n\bigl(\langle z^n(\cdot),\cdot\rangle_\cE\bigr)(U^n_r,\Theta^n_{r}) - F(z^n(\Theta^n_r),U^n_r)\bigr\|_{\cE^\ast}^2\,\dif r,\nonumber
\end{eqnarray}
where $L_3$ and $L_5$ are the Lipschitz constants in assumption \assref{C}{ass_C_lipschitz_conds}. Remember that $G^n$ is the quadratic variation defined in \eqref{definition_quad_var_operator}. Then, for fixed $t\geq 0$, the process
\begin{equation*}
s\mapsto\|M^n_{t+s}-M^n_t\|_{\cE^\ast}^2-\int_t^{t+s}\textnormal{Tr}\,G^n(U^n_r,\Theta^n_r)\,\dif r
\end{equation*}
is a martingale with respect to $(\sF^n_{t+s})_{s\geq 0}$ satisfying
\begin{equation*}
\EX^n\bigl[\|M^n_{t+s}-M^n_t\|_{\cE^\ast}^2\big|\sF^n_t\bigr]\,=\,\EX^n\Bigl[\int_t^{t+s}\textnormal{Tr}\,G^n(U^n_r,\Theta^n_r)\,\dif r\Big|\sF^n_t\Bigr]\,.
\end{equation*}
This can be proven using the martingale results developed in \cite[Section 3]{RTW}. Now, a combination of the estimates \eqref{Kurtz_aldous_est_1} and \eqref{Kurtz_aldous_est_2}, employing the continuous embeddings $\cE\hookrightarrow V$ and the conditional Fubini Theorem, see, e.g., \cite[Thm.~1.1.7]{Applebaum}, for interchanging conditional expectation and integration, yields
\begin{eqnarray*}
\lefteqn{\EX^n\bigl[\alpha_n^{-1}\|Z^n_{t+s}-Z^n_t\|_{H\times\cE^\ast}^2\big|\sF^n_t\bigr]}\\[1ex]
&\leq& K_1\,\int_t^{t+s}\EX^n\bigl[\alpha_n^{-1}\|Z^n_{r}-Z^n_r\|_{H\times V^\ast}^2\big|\sF^n_t\bigr]\,\dif r  + \EX^n\Bigl[\int_t^{t+s}\textnormal{Tr}\,G^n(U^n_r,\Theta^n_r)\,\dif r\Big|\sF^n_t\Bigr]\\[1ex]
&&\mbox{} + s\, K_1\Bigl(\EX^n\|U^n_t-u(t)\|_H^2+\EX^n\|z^n(\Theta^n_t)-p(t)\|_{V^\ast}^2\Bigr)\\[1ex]
&&\mbox{} + s\,K_1\EX^n\Bigl[\int_t^{t+s}\bigl\|\mathcal{A}^n\bigl(\langle z^n(\cdot),\cdot\rangle_\cE\bigr)(U^n_r,\Theta^n_{r}) - F(z^n(\Theta^n_r),U^n_r)\bigr\|_{\cE^\ast}^2\,\dif r\Big|\sF^n_t\Bigr]
\end{eqnarray*}
for a suitable finite constant $K_1$ independent of $n\in\mathbb{N}$. We then apply Gronwall's lemma to this estimate and multiply the resulting inequality by $\alpha_n$. Thus, as $s\leq \Delta t$, we obtain the estimate
\begin{eqnarray*}
\lefteqn{\EX^n\bigl[\|Z^n_{t+s}-Z^n_t\|_{H\times\cE^\ast}^2\big|\sF^n_t\bigr]}\\[1ex]
&\leq& \alpha_n\,\EX^n\Bigl[\int_t^{t+\Delta t}\textnormal{Tr}\,G^n(U^n_r,\Theta^n_r)\,\dif r\Big|\sF^n_t\Bigr]\e^{K_1\Delta t}\\[1ex]
&&\mbox{} +\Delta t\,K_1\Bigl(\alpha_n\|U^n_t-u(t)\|_H^2+\alpha_n\|z^n(\Theta^n_t)-p(t)\|_{V^\ast}^2\Bigr)\e^{K_1\Delta t}\\[1ex]
&&\mbox{} + \Delta t\,K_1\,\alpha_n\,\EX^n\Bigl[\int_t^{t+s}\bigl\|\mathcal{A}^n\bigl(\langle z^n(\cdot),\cdot\rangle_\cE\bigr)(U^n_r,\Theta^n_{r}) - F(z^n(\Theta^n_r),U^n_r)\bigr\|_{\cE^\ast}^2\,\dif r\Big|\sF^n_t\Bigr]\e^{K_1\Delta t}\,.
\end{eqnarray*}
Let us define the random variable $\delta^n(\Delta t)$ in \eqref{inequality_T3} as being the right hand side of this inequality. This choice is relevant since the left hand side is an upper bound to $\EX^n\bigl[\|Z^n_{t+s}-Z^n_t\|^2_{\cX^\ast\times\cE^\ast}\big|\sF^n_t\bigr]$ due to the continuous embeddings $H\hookrightarrow \cX^\ast$. Then this random variable satisfies
\begin{eqnarray}\label{final_tightness_proof_estimate}
\limsup_{n\to\infty} \EX^n[\delta^n(\Delta t)]& \leq& K_2\Delta t+K_2\, \limsup_{n\to\infty}\EX^n\Bigl[\alpha_n\int_t^{t+\Delta t}\textnormal{Tr}\,G^n(U^n_r,\Theta^n_r)\,\dif r\Bigr]\nonumber\\[1ex]
& \leq& K_2\Delta t+K_2\int_t^{t+\Delta t} \textnormal{Tr}\,G(u(s),p(s))\,\dif s
\end{eqnarray}
for a suitable constant $K_2$ independent of $n$. Here the first estimate is due to condition \assref{C}{ass_C_generator_diff_bound} and Lemma \eqref{lemma_uniform_integrable}. The second one follows from the convergence result in Proposition \ref{app_trace_con_prop} in the appendix. Since the upper bound \eqref{final_tightness_proof_estimate} converges to zero when $\Delta t\to 0$, condition \assref{T}{ass_T3} is satisfied.

\subsubsection{Norms and spaces}

In our assumptions we have to choose the right norms and spaces carefully. In applications the choices for the correct spaces $\cX^\ast$ and $\cE^\ast$ are usually determined by the need to establish condition \assref{C}{ass_C_tightness}. If the embeddings $\cX\hookrightarrow H$ and $\cE\hookrightarrow V$ are of Hilbert-Schmidt type then \assref{C}{ass_C_tightness} is satisfied under the conditions \assref{C}{ass_C_lipschitz_conds} -- \assref{C}{ass_C_generator_diff_bound}, see Remark \ref{embedding_remark}. Thus, in applications one usually chooses first the space $V^\ast$ small enough such that conditions \assref{C}{ass_C_lipschitz_conds}--\assref{C}{ass_C_generator_diff_bound} hold. Then the spaces $\cX$ and $\cE$ are chosen large enough such that the embeddings $\cX\hookrightarrow H$ and $\cE\hookrightarrow V$ are of Hilbert-Schmidt type. Choosing largest possible spaces $V,\cX,\cE$ one obtains the best possible regularity of the limiting process.
Secondly, condition \eqref{ass_C_generator_diff_bound_1} implies a condition akin to \assref{A}{ass_A_second_moments} and thus the processes $(\sqrt{\alpha_n}\,M^n_t)_{t\geq 0}$ are square-integrable $V^\ast$-- and $\cE^\ast$--valued c\`adl\`ag martingales under this assumption. Moreover the assumption \assref{C}{ass_C_generator_diff_bound} implies the same statements for the space $\cE^\ast$.
Concerning assumption \assref{C}{ass_C_differentiability}, we note that the required extensions exist if the embeddings are dense which is usually the case in applications. 
Also, in applications \assref{C}{ass_C_uniform_bound} is either a consequence of the Law of Large Numbers or follows directly from the path properties of the PDMPs.

\section{Proof of the Theorem \ref{fluctuation_limit_theorem} (Central Limit Theorem)}\label{section_proofs}

We split the proof of Theorem \ref{fluctuation_limit_theorem} into several parts. In the first part, Section \ref{proof_section_tightness}, we establish tightness of the sequence of laws of $Z^n$ in \eqref{def_fluctuations_process} and thus existence of a weakly converging subsequence. The short second part, Section \ref{proof_section_support}, shows that any weak limit of a converging subsequence is a continuous stochastic process. The central part of the proof is the third one, Section \ref{proof_section_characterisation}. Therein we first prove that the characteristic function of the limit satisfies the partial differential equation \eqref{the_complex_equality_in_theorem}. To this end we have adapted a proof from \cite{Kurtz2,Wainrib1} to the present infinite-dimensional setting. The main difficulties here are technical in nature by identification and careful treatment of corresponding infinite dimensional concepts that substantially simplify in Euclidean space, that is, the use of general functional analytic operator theory instead of linear algebra for matrices and vectors is necessary. We show that equation \eqref{the_complex_equality_in_theorem} possesses a solution and the solution characterises a diffusion process.

\subsection{Tightness}\label{proof_section_tightness}

We now prove Lemma \ref{lemma_uniform_integrable}. Remember that in the preceding section this lemma was the key to prove that conditions \assref{T}{ass_T1}--\assref{T}{ass_T3}, which ensure tightness, holds true under Assumptions C. Hence, tightness will follow.
 
\medskip

\begin{proof}[Proof of Lemma \ref{lemma_uniform_integrable}.] Due to the de la Vall\'ee-Poussin Theorem \cite[Appendix, Prop.~2.2]{EthierKurtz} it is sufficient to prove that the family \eqref{uniform_integrability_family} possesses uniformly bounded second moments. Using the first two Lipschitz conditions in \assref{C}{ass_C_lipschitz_conds} and Gronwall's inequality yields the almost sure inequality
\begin{eqnarray*}
\|U^n_t-u(t)\|_H^2+\|z^n(\Theta^n_t)-p(t)\|_{V^\ast}^2
\!\!&\!\!\leq&\!\! \Bigl(\|U^n_0-u_0\|_{H}^2+\|z^n(\Theta^n_0)-p_0\|^2_{V^\ast} +\sup_{s\in[0,t]}\|M^n_s\|_{V^\ast}^2\Bigr)\,K_1\e^{K_2t}\\
&&\hspace{-35pt}\mbox{}+ \Bigl(\int_0^t\bigl\|\mathcal{A}^n\bigl(\langle\cdot,z^n(\cdot)\rangle_V\bigr)(U^n_s,\Theta^n_s) - F(z^n(\Theta^n_s),U^n_s)\bigr\|_{V^\ast}^2\,\dif s \Bigr)\,K_1\e^{K_2t},
\end{eqnarray*}
where the constants $K_1,\,K_2$ are independent of $n\in\mathbb{N}$. Applying Cauchy-Schwarz inequality to the last term, taking expectations and multiplying by $\alpha_n$ yields \eqref{proof_tightness_gronwall_est}. Let us show now that each expectation in the right hand side of  \eqref{proof_tightness_gronwall_est} is bounded over all $n\in\mathbb{N}$ and $t\leq T$: Indeed the terms containing the initial conditions are bounded due to assumption \assref{C}{ass_C_IC_bounds}. The boundedness of the martingale term holds due to Doob's inequality and condition \eqref{ass_C_generator_diff_bound_1} as $\|M^n\|_{V^\ast}^2$ is a submartingale.
The last term is bounded by condition \eqref{ass_C_generator_diff_bound_2}. This proves the lemma.
\end{proof}

The results so far have the following consequence which will be useful in the remainder of the proof. As we now have established tightness there exists a weak limit $Z_t$ to a subsequence of the sequence $Z^n_t$ in $\cX^\ast\times\cE^\ast$. Without loss of generality we identify in the following $Z^n$ with a converging subsequence. Then, the Continuous Mapping Theorem and the established uniform integrability of $Z^n$ imply, due to \cite[Appendix, Prop.~2.3]{EthierKurtz}, the convergence of expectations
\begin{equation}\label{integrability_of_limit}
\lim_{n\to\infty}\EX^n\|Z^n_t\|_{\cX^\ast\times\cE^\ast}\ = \ \EX \|Z_t\|_{\cX^\ast\times\cE^\ast} <\infty \qquad\forall\, t\geq 0\,.
\end{equation}

\begin{remark}\label{embedding_remark} Before closing this section let us mention another consequence of Lemma \ref{lemma_uniform_integrable}. Let $\varphi_k=(\varphi_k^\cX,\varphi_k^\cE)$, $k\in\mathbb{N}$, be an orthonormal basis of $\cX\times\cE$. The Markov inequality yields
\begin{equation*}
\Pr^n\Bigl[\sum_{k>m}|\langle Z^n_t,\varphi_k\rangle_{\cX\times\cE}|^2>\delta\Bigr]\ \leq\ \frac{1}{\delta}\EX^n\Bigl[\sum\nolimits_{k>m}\alpha_n\,\langle U^n_t-u(t),\varphi^\cX_k\rangle_{\cX}^2+\alpha_n\,\langle z^n(\Theta^n_t)-p(t),\varphi^\cE_k\rangle^2_{\cE}\Bigr] 
\end{equation*}
and employing the properties of the evolution triplet we further estimate
\begin{eqnarray*}
\lefteqn{\Pr^n\Bigl[\sum_{k>m}|\langle Z^n_t,\varphi_k\rangle_{\cX\times\cE}|^2>\delta\Bigr]}\\
&&\phantom{xxxxxxx} \leq\ \frac{1}{\delta}\EX^n\Bigl[\sum\nolimits_{k>m}\bigl(\|\varphi^\cX_k\|_H^2+\|\varphi^\cE_k\|_V^2\bigr)\,\alpha_n\,\Bigl(\|U^n_t-u(t)\|_H^2+\alpha_n\,\|z^n(\Theta^n_t)-p(t)\|^2_{V^\ast}\Bigr)\Bigr]\,.
\end{eqnarray*}
Then the additional assumption that the embeddings $\cX\hookrightarrow H$ and $\cE\hookrightarrow V$ are of Hilbert-Schmidt type implies that the sum $\sum\nolimits_{k>m}\bigl(\|\varphi^\cX_k\|_H^2+\|\varphi^\cE_k\|_V^2\bigr)$ is finite and vanishes for $m\to\infty$. Thus
\begin{eqnarray*}
\lefteqn{\Pr^n\Bigl[\sum_{k>m}|\langle Z^n_t,\varphi_k\rangle_{\cX\times\cE}|^2>\delta\Bigr]}\\
&&\leq\ \frac{1}{\delta}\Bigl(\sum\nolimits_{k>m}\bigl(\|\varphi^\cX_k\|_H^2+\|\varphi^\cE_k\|_V^2\bigr) \Bigr)\sup\nolimits_{n\in\mathbb{N},\,t\leq T}\Bigl(\alpha_n\,\EX^n\bigl[\|U^n_t-u(t)\|_H^2+\|z^n(\Theta^n_t)-p(t)\|^2_{V^\ast}\bigr]\Bigr)\,.
\end{eqnarray*}
Lemma \ref{lemma_uniform_integrable} ensures that the supremum is bounded. Hence in this case \assref{C}{ass_C_tightness} follows from \assref{C}{ass_C_lipschitz_conds} -- \assref{C}{ass_C_generator_diff_bound}.
\end{remark}

\subsection{Support of the limit \eqref{support_of_the_limit_result}}\label{proof_section_support}

Let us show that any weak limit to a subsequence of the sequence $Z^n$ is a continuous process. This result is an immediate consequence of the Martingale Central Limit Theorem \cite[Thm.~5.1]{RTW} and its method of proof. Therein it was shown that the rescaled martingales converge to a continuous process and for this result it is sufficient that
\begin{equation}\label{vanishing_martingale_jumps}
 \lim_{n\to\infty}\EX\sum_{s\in(0,t]}\alpha_n\|\Delta M_t^n\|^2_{\cE^\ast}=\EX\sum_{s\in(0,t]}\|\Delta M_t\|^2_{\cE^\ast} = 0
\end{equation}
holds for all $t>0$, where $\Delta_t M^n = M^n_t-M^n_{t-}$ and $M$ denotes the weak limit of the martingales $\sqrt{\alpha_n} M^n$. The property \eqref{vanishing_martingale_jumps} was established in \cite{RTW} under assumptions implied by the Assumptions \ref{assumptions_CLT}. An analogous result immediately follows in the present case for the processes $Z^n$. Note that
\begin{equation*}
\|\Delta_t Z^n\|^2_{\cX^\ast\times\cE^\ast}\,=\,\alpha_n\|\Delta_t(U^n-u)\|^2_{\cX^\ast}+\alpha_n\|\Delta_t(z^n(\Theta^n)-p)\|_{\cE^\ast}^2\,=\, 0 +\alpha_n\|\Delta_t M^n\|_{\cE^\ast}^2\,.
\end{equation*}
Thus 
\begin{equation*}
\lim_{n\to\infty}\EX\sum_{s\in(0,t]}\|\Delta_tZ^n\|^2_{\cX^\ast\times\cE^\ast}=0
\end{equation*}
follows immediately from \eqref{vanishing_martingale_jumps}. Then we infer from \cite[Chap.~3, Thm.~1.2]{EthierKurtz} that the limit possesses almost surely continuous paths.\medskip

\subsection{Characterisation of the limit}\label{proof_section_characterisation}

In the course of the proof we have so far obtained that there exists a weak limit to a subsequence of the processes $Z^n$, which is an a.s.~continuous process on $\cX^\ast\times\cE^\ast$. In this final part we now characterise this limit as a particular diffusion process and show that it is unique. We use a characterisation by convergence of the finite-dimensional distributions. 
Let us introduce some notation. We denote by $(Z_t)_{t\geq 0}$ a version of the limiting process defined on some probability space $(\Omega,\sF,\Pr)$ and $\EX$ denotes the expectation with respect to the measure $\Pr$. For given $\psi\in\cX\subset H$ and $\Phi\in\cE$ we denote by $h$ the bounded, continuous mapping
\begin{equation}\label{def_of_function_h}
h: H\times\cE^\ast\to\mathbb{C}: (x,y)\to \exp\bigl(i\langle x,\psi\rangle_\cX+i\langle y,\Phi\rangle_{\mathcal{E}}\bigr) 
\end{equation}
for which we also use the shorter notation $h(z)=h(x,y)$ for $z=(x,y)$. Moreover, due to the chain rule $h$ is partially Fr\'echet differentiable with derivatives
\begin{equation*}
\frac{\partial h}{\partial x}(x,y)= i h(x,y)\,\psi\, \in\, L(H,\mathbb{C})\triangleq H,\quad \frac{\partial h}{\partial y}(x,y)= i h(x,y)\,\Phi\, \in\, L(\cE^\ast,\mathbb{C})\triangleq\cE\,.
\end{equation*}
That is, the derivatives lie in the corresponding dual spaces, i.e., $H^\ast=H$ and $\cE$, respectively, considered as complex Hilbert spaces. Conversely, $h$ can be considered as a function of $\psi$ and $\Phi$ for given $x$ and $y$ in which case we obtain the partial derivatives
\begin{equation*}
\frac{\partial h}{\partial \psi}(x,y)= i h(x,y)\,x\, \in\, L(\cX,\mathbb{C})\triangleq\cX^\ast,\quad \frac{\partial h}{\partial \Phi}(x,y)= i h(x,y)\,y\, \in\, L(\cE,\mathbb{C})\triangleq\cE^\ast\,.
\end{equation*}
By the use of this notation the characteristic function $\varphi^n$ of the random variable $Z^n_t\in\cX^\ast\times\cE^\ast$ satisfies
\begin{equation*}
\varphi^n(t,\psi,\Phi)=\EX^n h(Z^n_t)=\EX^n h\Bigl(\sqrt{\alpha_n}(U^n_t-u(t)),\sqrt{\alpha_n}(z^n(\Theta^n_t)-p(t))\Bigr)\,. 
\end{equation*}
Note here that $U^n_t-u(t)$ is considered a random variable in $\cX^\ast$, however, it actually takes values in $H$ which is embedded in $\cX^\ast$ in the sense of evolution triplets.
We show in Appendix \eqref{appendix_function_example} that $h$ defined in \eqref{def_of_function_h} is in the domain of the extended generator of the PDMP $(U^n_t,\Theta^n_t,u(t),p(t))_{t\geq 0}$.\footnote{It is clear that the process $(U^n_t,\Theta^n_t,u(t),p(t))_{t\geq 0}$ is a PDMP as the components $(U^n_t,\Theta^n_t)_{t\geq 0}$ form a PDMP and also $(u(t),p(t))_{t\geq 0}$ are a (degenerate) continuous Markov process as being the solution of a deterministic abstract evolution equation.} 
Thus Dynkin's formula \eqref{Dynkin_formula} yields \footnote{To be precise, here Dynkin's formula is applied to the map $$ (U,\Theta,u,p)\mapsto \exp\Big(i\sqrt{\alpha_n}\langle U-u,\psi\rangle_{V}+i\sqrt{\alpha_n}\langle\Phi,z^n(\Theta)-p\rangle_{\mathcal{E}}\Bigr) $$ which is in the domain of the extended generator of the PDMP $(U^n_t,\Theta^n_t,u(t),p(t))_{t\geq 0}$ satisfying the special conditions in Theorem \ref{PDMP_gen_theorem}.}
\begin{eqnarray*}
\lefteqn{\varphi^n(t,\psi,\Phi)-\EX^n h\Bigl(\sqrt{\alpha_n}(U^n_0-u_0),\sqrt{\alpha_n}(z^n(\Theta^n_0)-p_0)\Bigr)}\\[2ex]
&\hspace{-30pt}=&\hspace{-20pt}\int_0^t\EX^n\Bigl[\bigl\langle A(z^n(\Theta^n_s))\,U^n_s+B(U^n_s,z^n(\Theta^n_s))-A(p(s))\,u(s)-B(u(s),p(s)),\psi\bigr\rangle_\cX\,i\sqrt{\alpha_n}\, h(Z^n_s)\\[2ex]
&&\hspace{-20pt}\phantom{xxxxxxx}\mbox{}- \bigl\langle F(p(s),u(s)),\Phi\bigr\rangle_\cE\,i\sqrt{\alpha_n}\,h(Z^n_s)\\[2ex]
&&\hspace{-20pt}\phantom{xxxxxxx}\mbox{}+ \Lambda^n(U^n_s,\Theta^n_s)\int_{K_n}\Bigl(h\bigl(\sqrt{\alpha_n}(U^n_s-u(s)),\sqrt{\alpha_n}(z^n(\xi)-p(s))\bigr)\\[2ex]
&&\hspace{-20pt}\phantom{xxxxxxxxxxxxxxxxxxxx}\mbox{}- h\bigl(\sqrt{\alpha_n}(U^n_s-u(s)),\sqrt{\alpha_n}(z^n(\Theta^n_s)-p(s))\bigr)\Bigr)\,\mu^n\bigl((U^n_s,\Theta^n_s),\dif\xi\bigr)
\Bigr]\,\dif s\\[2ex]
&=:& \int_0^t\EX^n\bigl[ H_1(s)+H_2(s)\bigr]\,\dif s,
\end{eqnarray*}
where the terms $H_1$ and $H_2$ denote the continuous and jump part of the generator, respectively. Introducing the function
\begin{equation*}
K(u)=e^{iu}-1-iu+\tfrac{1}{2}u^2,
\end{equation*}
for which $K(u)=\landau(u^3)$ holds, we obtain
\begin{eqnarray*}
H_2(s)&=&\Lambda^n(U^n_s,\Theta^n_s)h(Z^n_s)\int_{K_n}\Bigl[\exp\Bigl(\sqrt{\alpha_n}\langle z^n(\xi)-z^n(\Theta^n_s),\Phi\rangle_\cE\Bigr)-1\Bigr]\,\mu^n\bigl((U^n_s,\Theta^n_s),\dif\xi\bigr)\\[2ex]
&=& \Lambda^n(U^n_s,\Theta^n_s)h(Z^n_s)\int_{K_n}\Bigl[i\sqrt{\alpha_n}\langle z^n(\xi)-z^n(\Theta^n_s),\Phi\rangle_\cE\\[2ex]
&&\mbox{}-\frac{\alpha_n}{2}\langle z^n(\xi)-z^n(\Theta^n_s),\Phi\rangle_\cE^2+K\Bigl(\sqrt{\alpha_n}\langle z^n(\xi)-z^n(\Theta^n_s),\Phi\rangle_\cE\Bigr)\Bigr]\,\mu^n\bigl((U^n_s,\Theta^n_s),\dif\xi\bigr)\,.
\end{eqnarray*}
Next note that (see \cite[Sec.~3]{RTW} for details),
\begin{equation*}
\Lambda^n(U^n_s,\Theta^n_s)\int_{K_n}\Bigl( \langle z^n(\xi),\Phi\rangle_\cE-\langle z^n(\Theta^n_t),\Phi\rangle_\cE\Bigr)\,\mu^n\bigl((U^n_s,\Theta^n_s),\dif\xi\bigr)= \bigl[\mathcal{A}\langle z^n(\cdot),\Phi\rangle_\cE\bigr](U^n_s,\Theta^n_s)\,,
\end{equation*}
where $\mathcal{A}\langle z^n(\cdot),\Phi\rangle_\cE$ denotes the application of the generator of the PDMP to the map $(U,\Theta,u,p)\mapsto \langle z^n(\Theta),\Phi\rangle_\cE$. \medskip

Therefore we obtain that $\varphi^n(t,\psi,\Phi)-\varphi^n(0,\psi,\Phi) $ is the sum of five integral terms that we denote by (i) to (v). Next we make these terms explicit and we discuss their respective limits for $n\to\infty$. We start with (i), (iv) and (v) which are simpler before proceeding to the more demanding terms (ii) and (iii).


\subsubsection{The term (i)}

The term (i) is equal to 
\begin{equation*}
-\frac{1}{2}\int_0^t\EX^n\Bigl[\alpha_n\,\Lambda^n(U^n_s,\Theta^n_s)\int_{K_n}\langle z^n(\xi)-z^n(\Theta^n_s),\Phi\rangle_\cE^2\, \mu^n\bigl((U^n_s,\Theta^n_s),\dif\xi\bigr)\,h(Z^n_s)\Bigr]\,\dif s
\end{equation*}
which can also be written as
\begin{equation*}
-\frac{1}{2}\,\alpha_n\int_0^t\EX^n\Bigl[h(Z^n_s)\,\langle G^n(U^n_s,\Theta^n_s)\Phi,\Phi\rangle_\cE\Bigr]\,\dif s\,.
\end{equation*}
This term converges to
\begin{equation*}
-\frac{1}{2}\int_0^t\langle G(u(s),p(s))\Phi,\Phi\rangle_\mathcal{E}\,\EX\bigl[h(Z_s)\bigr]\,\dif s, 
\end{equation*}
where $\EX\bigl[h(Z_s)\bigr]=\varphi(s,\psi,\Phi)$ is the characteristic function of the limit process, due to
\begin{eqnarray*}
\lefteqn{\Bigl|\int_0^t\alpha_n\EX^n\Bigl[h(Z^n_s)\,\langle G^n(U^n_s,\Theta^n_s)\Phi,\Phi\rangle_\cE\Bigr]\,\dif s-\int_0^t\langle G(u(s),p(s))\Phi,\Phi\rangle_\cE\,\EX\bigl[h(Z_s)\bigr]\,\dif s\Bigr|}\\
&\leq& \int_0^t\EX^n\Bigl[|h(Z^n_s)|\,\big|\alpha_n\langle G^n(U^n_s,\Theta^n_s)\Phi,\Phi\rangle_\cE-\langle G(u(s),p(s))\Phi,\Phi\rangle_\cE\big|\Bigr]\,\dif s\\
&&\mbox{}+ \Bigl|\int_0^t\langle G(u(s),p(s))\Phi,\Phi\rangle_\cE\,\EX\bigl[h(Z^n_s)\bigr]\,\dif s-\int_0^t\langle G(u(s),p(s))\Phi,\Phi\rangle_\cE\,\EX\bigl[h(Z_s)\bigr]\,\dif s\,\Bigr|\,.
\end{eqnarray*}
Indeed, the first integral in the right hand side converges to zero due to $|h(Z^n_s)|\leq 1$ and condition \eqref{ass_C_covariance_convergence}. The second integral converges to zero as well due to the Bounded Convergence Theorem as $\EX[h(Z_s^n)]\to \EX[h(Z_s)]$ for all $s\geq 0$ by definition of the weak convergence.


\subsubsection{The term (iv)}

The term (iv) is equal to
\begin{equation*}
\int_0^t i\sqrt{\alpha_n}\,\EX^n\Bigl[\Bigl(\bigl[\mathcal{A}\langle z^n(\cdot),\Phi\rangle_\cE\bigr](U^n_s,\Theta^n_s)-\bigl\langle F(U^n_s,z^n(\Theta^n_s)),\Phi\bigr\rangle_\cE\Bigr)\,h(Z^n_s)\Bigr]\,\dif s\,.
\end{equation*}
It vanishes in the limit due to condition \eqref{ass_C_generator_convergence} and the following estimate:
\begin{eqnarray*}
\tn{(iv)}&\leq&\int_0^t \sqrt{\alpha_n}\,\EX^n\Bigl[\Big|\bigl[\mathcal{A}\langle z^n(\cdot),\Phi\rangle_\cE\bigr](U^n_s,\Theta^n_s)-\bigl\langle F(U^n_s,z^n(\Theta^n_s)),\Phi\bigr\rangle_\cE\Big|\,|h(Z_s^n)|\Bigr]\,\dif s\\
&\leq& \sqrt{\alpha_n}\int_0^t\EX^n\big|\bigl[\mathcal{A}\langle z^n(\cdot),\Phi\rangle_\cE\bigr](U^n_s,\Theta^n_s)-\bigl\langle F(U^n_s,z^n(\Theta^n_s)),\Phi\bigr\rangle_\cE\big|\,\dif s\,.
\end{eqnarray*}


\subsubsection{The term (v)}

The term (v) is equal to
\begin{equation*}
\int_0^t\EX^n\Bigl[\Lambda^n(U^n_s,\Theta^n_s)\int_{K_n}K\Bigl(\sqrt{\alpha_n}\langle z^n(\xi)-z^n(\Theta^n_s),\Phi\rangle_\cE\Bigr)\,\mu^n\bigl((U^n_s,\Theta^n_s),\dif\xi\bigr)\,h(Z^n_s)\Bigr]\,\dif s\,.
\end{equation*}
In order to study its limit, note first that assumption \eqref{ass_B_large_fluctuations_1} implies the existence of a sequence $\beta_n$, $n\in\mathbb{N}$, converging to zero\footnote{For example, we can construct such a sequence in the following way: Let $a_n(\beta)$ denote the sequence elements in the limit of \eqref{ass_C_covariance_convergence} which satisfy $\lim_{n\to\infty} a_n(\beta)=0$ for all $\beta>0$. The aim is to construct a sequence $\beta_n$ satisfying (A) $\lim_{n\to\infty}\beta_n=0$ and (B) $\lim_{n\to\infty} a_n(\beta_n)=0$.

We first define a subsequence of $\beta_n$ by $\beta_{n_m}=m^{-1}$ for $m\in\mathbb{N}$ where the strictly increasing values of the indices $n_m$ has to be defined. For $n_m< n < n_{m+1}$ we set $\beta_n=m^{-1}$. This sequence converges to zero and hence the condition (A) on the sequence is satisfied. Choosing cleverly the values of the indices $n_m$ allows for also the condition (B) to be satisfied.

We define these values recursively: Choose $n_1$ to be the smallest integer $n'$ such that $a_{n}(1)<1$ for all $n\geq n'$ which exists due to \eqref{ass_C_covariance_convergence}. Then for the finitely many $n<n_1$ we can set $\beta_n$ arbitrary, say, equal to 1, as these sequence elements are insignificant for the limit. Now we have $m=1$. Next set for $n_{m+1}$ some integer $n''$ larger than $n_m$ which satisfies $a_{n}((m+1)^{-1})\leq (m+1)^{-1}$ for all $n\geq n''$. For the $n$ in between $n_m$ and $n_{m+1}$ we set $\beta_n=m^{-1}$ and it thus holds that $a_n(\beta_n)\leq \beta_n$ for all these $n$. The sequence $a_n(\beta_n)$ we construct in this way is monotonically decreasing and convergent to zero.} such that
\begin{equation}\label{conseqeuence_similar_to_Kurtz}
\lim_{n\to\infty}\,\EX^n\Bigl[\int_0^t\Lambda^n(U^n_s,\Theta^n_s)\int_{\sqrt{\alpha_n}\,|\langle\Phi, z^n(\xi)-z^n(\Theta_s^n)\rangle_\mathcal{E}|>\beta_n}\,\mu^n\bigl((U^n_s,\Theta^n_s),\dif\xi\bigr)\,\dif s\Bigr]\,=\, 0\,.
\end{equation}

We then obtain for (v) the estimate
\begin{eqnarray*}
\lefteqn{\int_0^t\EX^n\Bigl[\Lambda^n(U^n_s,\Theta^n_s)|h(Z^n_s)|\int_{K_n} \big|K\Bigl(\sqrt{\alpha_n}\langle z^n(\xi)-z^n(\Theta^n_s),\Phi\rangle_\cE\Bigr)\big|\,\mu^n\bigl((U^n_s,\Theta^n_s)\Bigr]\,\dif s}\\
&\leq& \nbar k \,\int_0^t\EX^n\Bigl[\Lambda^n(U^n_s,\Theta^n_s)\int_{\sqrt{\alpha_n}\,|\langle z^n(\xi)-z^n(\Theta_s^n),\Phi\rangle_\cE|>\beta_n}\alpha_n\,|\langle z^n(\xi)-z^n(\Theta_s^n),\Phi\rangle_\cE|^2\,\mu^n\bigl((U^n_s,\Theta^n_s)\Bigr]\,\dif s\\
&&\mbox{} + |k(\beta_n)|\,\alpha_n\int_0^t\EX^n\Bigl[\Lambda^n(U^n_s,\Theta^n_s)\int_{\sqrt{\alpha_n}\,|\langle z^n(\xi)-z^n(\Theta_s^n),\Phi\rangle_\cE|\leq \beta_n}|\langle z^n(\xi)-z^n(\Theta_s^n),\Phi\rangle_\cE|^2\,\mu^n\bigl((U^n_s,\Theta^n_s)\Bigr]\,\dif s,
\end{eqnarray*}
where $k(u):=K(u)/u^2$ is bounded by some constant $\nbar k$. The first term in the right hand side of this inequality converges to zero due to conditions \eqref{clt_martingale_cond_1} and \eqref{conseqeuence_similar_to_Kurtz}, as does the second term due to $|k(\beta_n)|=\landau(\beta_n)$ in combination with condition \eqref{ass_C_generator_diff_bound_1}. Therefore, the term (v) vanishes for $n\to\infty$.


\subsubsection{The terms (ii) and (iii)}

The terms (ii) and (iii) are given by 
\begin{equation*}
\int_0^t i \sqrt{\alpha_n}\,\EX^n\Bigl[\bigl\langle A(z^n(\Theta^n_s))\,U^n_s+B(U^n_s,z^n(\Theta^n_s))-A(p(s))\,u(s)-B(u(s),p(s)),\psi\bigr\rangle_\cX\,h(Z^n_s)\Bigr]\,\dif s,
\end{equation*}
which is further split into parts containing the operators $A$ and $B$, and 
\begin{equation*}
\int_0^t i\sqrt{\alpha_n}\,\EX^n\Bigl[\bigl\langle F(U^n_s,z^n(\Theta^n_s))-F(u(s),p(s)),\Phi\bigr\rangle_\cE\,h(Z^n_s)\Bigr]\,\dif s,
\end{equation*}
respectively. Let us start with the term (ii) containing and prove the convergence
\begin{eqnarray}\label{exp_result_B}
\lefteqn{\lim_{n\to\infty}\int_0^t i \sqrt{\alpha_n}\,\EX^n\Bigl[\bigl\langle B(U^n_s,z^n(\Theta^n_s))-B(u(s),p(s)),\psi\bigr\rangle_\cX\,h(Z^n_s)\Bigr]\,\dif s}\\[2ex] 
&&\phantom{xxxxxxxxx}=\ \int_0^t \Bigl\langle\frac{\partial \varphi}{\partial\psi},\ntilde B_u^\ast[u(s),p(s)]\psi\Bigr\rangle_\cX + \Bigl\langle \frac{\partial\varphi}{\partial\Phi},\ntilde B_p^\ast[u(s),p(s)]\psi\Bigr\rangle_\cE\,\dif s,\nonumber
\end{eqnarray}
where the partial derivatives of $\varphi$ are evaluated at $(s,\psi,\Phi)$. Note that the operators $\ntilde B_u^\ast[u(s),p(s)]:\cX\to\cX$ and $\ntilde B_p^\ast[u(s),p(s)]:\cX\to\cE$ are the adjoint operators to the extensions of the partial derivatives, see \assref{C}{ass_C_differentiability}. 

The derivation of the limit for the operators $A$ and $F$ work completely analogously and the same line of argument yields the convergence to a limit given by analogous formulae as for the term concerning $B$, i.e.,
\begin{eqnarray}\label{exp_result_A}
\lefteqn{\lim_{n\to\infty}\int_0^ti \sqrt{\alpha_n}\,\EX^n\Bigl[\bigl\langle A(z^n(\Theta^n_s))\,U^n_s-A(p(s))\,u(s),\psi\bigr\rangle_\cX\,h(Z^n_s)\Bigr]\,\dif s }\nonumber\\[2ex]
&&\phantom{xxxxxxxxxxxxxxxxxx}=\  \int_0^t \Bigl\langle\frac{\partial\varphi}{\partial\psi},\ntilde A^\ast(p(s))\psi\Bigr\rangle_\cX+\Bigl\langle\frac{\partial \varphi}{\partial\Phi},\ntilde A_p^\ast[u(s),p(s)]\,\psi\Bigr\rangle_\cE\,\dif s\phantom{xxxxxxx}
\end{eqnarray}
and 
\begin{eqnarray}\label{exp_result_F}
\lefteqn{\lim_{n\to\infty}\int_0^t i \sqrt{\alpha_n}\,\EX^n\Bigl[\bigl\langle F(U^n_s,z^n(\Theta^n_s))-F(u(s),p(s)),\Phi\bigr\rangle_\cE\,h(Z^n_s)\Bigr]\,\dif s}\\[2ex] 
&&\phantom{xxxxxxxxx}=\ \int_0^t \Bigl\langle\frac{\partial \varphi}{\partial\psi},\ntilde F_u^\ast[u(s),p(s)]\Phi\Bigr\rangle_\cX + \Bigl\langle \frac{\partial\varphi}{\partial\Phi},\ntilde F_p^\ast[u(s),p(s)]\Phi\Bigr\rangle_\cE\,\dif s,\nonumber
\end{eqnarray}
where the partial derivatives of $\varphi$ are evaluated at $(s,\psi,\Phi)$. Here the operators marked $\phantom{x}^\ast$ are the adjoints to the extensions $\ntilde A(p(s))\in L(\cX^\ast,\cX^\ast)$, $\ntilde A_p[u(s),p(s)]\in L(\cE^\ast,\cX^\ast)$, $\ntilde F_u[u(s),p(s)] \in L(\cX^\ast,\cE^\ast)$ and $\ntilde F_p[u(s),p(s)]\in L(\cE^\ast,\cE^\ast)$ of the partial derivatives.

\medskip

We now prove in detail \eqref{exp_result_B}. We first derive the pointwise limits for (almost) all $s\in [0,t]$ of the integrands in \eqref{exp_result_B} and then use the Dominated Convergence Theorem to infer the convergence of the integrals. Expanding the mapping $B(U^n_s,z^n(\Theta^n_s))$ around $(u(s),p(s))$  we obtain
\begin{eqnarray}\label{expansion_in_B}
\lefteqn{\langle B(U^n_s,z^n(\Theta^n_s))-B(u(s),p(s)),\psi\bigr\rangle_\cX \ =}\\[1ex]
&&\phantom{xxxxxx}\mbox{} + \langle B_u[u(s),p(s)](U^n_s-u(s)),\psi\rangle_\cX + \langle B_p[u(s),p(s)](z^n(\Theta^n_s)-p(s)),\psi\rangle_\cX +\eps_{B,n}\,.\nonumber
\end{eqnarray}

We thus obtain -- omitting the remainder term $\eps_{B,n} $-- the equalities
\begin{eqnarray*}
i\sqrt{\alpha_n}\bigl\langle B_u[u(s),p(s)](U^n_s-u(s)),\psi\bigr\rangle_\cX\, h(Z^n_s) &=& i\sqrt{\alpha_n}\bigl\langle \ntilde B_u^\ast[u(s),p(s)](U^n_s-u(s)),\psi\bigr\rangle_\cX\, h(Z^n_s)\\[2ex]
&=& i\sqrt{\alpha_n}\bigl\langle U^n_s-u(s),\ntilde B_u^\ast[u(s),p(s)]\psi\bigr\rangle_\cX\, h(Z^n_s)\\[2ex]
&=& \Bigl\langle\frac{\partial h(Z^n_s)}{\partial\psi},\ntilde B_u^\ast[u(s),p(s)]\psi\Bigr\rangle_\cX\in\mathbb{C} 
\end{eqnarray*}
and analogously
\begin{eqnarray*}
i\sqrt{\alpha_n}\bigl\langle B_p[u(s),p(s)](z^n(\Theta^n_s)-p(s)),\psi\bigr\rangle_\cX\, h(Z^n_s)
&=& \Bigl\langle\frac{\partial h(Z^n_s)}{\partial\Phi},\ntilde B_p^\ast[u(s),p(s)]\psi\Bigr\rangle_\cE\in\mathbb{C}. 
\end{eqnarray*} 
Here the operator $\ntilde B_u^\ast[u(s),p(s)]\in L(\cX,\cX)$ is the adjoint of $\ntilde B_u[u(s),p(s)]\in L(\cX^\ast,\cX^\ast)$ and $\ntilde B_p^\ast[u(s),p(s)]\in L(\cX,\cE)$ is the adjoint of $\ntilde B_p[u(s),p(s)]\in L(\cE^\ast,\cX^\ast)$. Then the Continuous Mapping Theorem and the continuity of the derivatives of $h$ imply that
\begin{equation*}
 \Bigl\langle\frac{\partial h(Z^n_s)}{\partial\psi},\ntilde B_u^\ast[u(s),p(s)]\psi\Bigr\rangle_\cX\quad \stackrel{d}{\longrightharpoonup}\quad \Bigl\langle\frac{\partial h(Z_s)}{\partial\psi},\ntilde B_u^\ast[u(s),p(s)]\psi\Bigr\rangle_\cX
\end{equation*}
and
\begin{equation*}
 \Bigl\langle\frac{\partial h(Z^n_s)}{\partial\Phi},\ntilde B_p^\ast[u(s),p(s)]\psi\Bigr\rangle_\cE \quad \stackrel{d}{\longrightharpoonup}\quad \Bigl\langle\frac{\partial h(Z_s)}{\partial\Phi},\ntilde B_p^\ast[u(s),p(s)]\psi\Bigr\rangle_\cE
\end{equation*}
where $ \stackrel{d}{\longrightharpoonup}$ denotes convergence in distribution. Furthermore, the boundedness of $h$, the uniform integrability of $Z^n$ due to Lemma \ref{lemma_uniform_integrable} and the continuity of the partial derivatives imply that the family of random variables given by the real and imaginary parts of the random variables
\begin{equation*}
\Bigl\langle\frac{\partial h(Z^n_s)}{\partial\psi},\ntilde B_u^\ast[u(s),p(s)]\psi\Bigr\rangle_X\,, \Bigl\langle\frac{\partial h(Z^n_s)}{\partial\Phi},\ntilde B_p^\ast[u(s),p(s)]\psi\Bigr\rangle_\cE\quad\forall\, s\leq t,\,n\in\mathbb{N}
\end{equation*}
are uniformly integrable. Thus \cite[Appendix, Prop.~2.3]{EthierKurtz} yields that
\begin{eqnarray*}
 \lim_{n\to\infty}\EX^n\Bigl\langle\frac{\partial h(Z^n_s)}{\partial\psi},\ntilde B_u^\ast[u(s),p(s)]\psi\Bigr\rangle_\cX &=& \EX\Bigl\langle\frac{\partial h(Z_s)}{\partial\psi},\ntilde B_u^\ast[u(s),p(s)]\psi\Bigr\rangle_\cX,\\[2ex]
\lim_{n\to\infty}\EX^n\Bigl\langle\frac{\partial h(Z^n_s)}{\partial\Phi},\ntilde B_p^\ast[u(s),p(s)]\psi\Bigr\rangle_\cE &=& \EX\Bigl\langle\frac{\partial h(Z_s)}{\partial\Phi},\ntilde B_p^\ast[u(s),p(s)]\psi\Bigr\rangle_\cE.
\end{eqnarray*}
Finally, the Dominated Convergence Theorem (see Appendix \ref{app_dom_conv_appl} for a detailed argument) allows to interchange taking the expectation and the Fr\'echet derivative due to \eqref{integrability_of_limit} and the boundedness of the exponential function along the imaginary axis. This yields
\begin{eqnarray*}
 \EX\Bigl\langle\frac{\partial h(Z_s)}{\partial\psi},\ntilde B_u^\ast[u(s),p(s)]\psi\Bigr\rangle_\cX\ = \ \Bigl\langle\EX\frac{\partial h(Z_s)}{\partial\psi},\ntilde B_u^\ast[u(s),p(s)]\psi\Bigr\rangle_\cX&=& \Bigl\langle\frac{\partial \varphi}{\partial\psi},\ntilde B_u^\ast[u(s),p(s)]\psi\Bigr\rangle_\cX,
\end{eqnarray*}
where the characteristic function of the limiting process $\varphi$ is evaluated at $(s,\psi,\Phi)$. This holds for (almost) all $s\in [0,t]$. Moreover, the integrands in the left hand side of \eqref{exp_result_B} are bounded in $s$ and $n$, thus dominated convergence yields that the integrals converge and \eqref{exp_result_B} holds. We have deferred the detailed estimates of this last step to Appendix \ref{app_dom_conv_appl}. 

\begin{remark}\label{remainders} Note that in parts (ii) and (iii) it was tacitly assumed that the remainder terms $\eps_{A,n}$, $\eps_{B,n}$ and $\eps_{F,n}$ in the above expansions vanish for $n\to\infty$. An argument why this holds under the Assumptions \ref{assumptions_CLT} is detailed in Appendix \ref{app_vanishing_remainder}. 
\end{remark}

\subsubsection{Combination of the estimates}

We now study $\varphi^n(t,\psi,\Phi)-\varphi^n(0,\psi,\Phi)$ which equals the sum of the previously discussed five terms. On the one hand, thanks to the convergence in distribution of $Z^n_s$ to a limit, the sequence $\varphi^n(t,\psi,\Phi)$ converges to $\varphi(t,\psi,\Phi)$ for all $t\geq 0$ and on the other hand the five terms converge to their limits respectively. Hence, we have obtained that the characteristic function $\varphi:\rr\times \cX\times \cE \to C: (t,\psi,\Phi)\to\varphi(t,\psi,\Phi)$ of the limit $Z$ satisfies the complex partial differential equation
\begin{eqnarray}\label{the_complex_equality}
\frac{\partial\varphi}{\partial t}&=& -\frac{1}{2}\bigl\langle G(u(s),p(s))\Phi,\Phi\bigr\rangle_\cE\,\varphi+ \Bigl\langle\frac{\partial\varphi}{\partial \psi}, \ntilde A^\ast(p(s))\,\psi+\ntilde B_u^\ast[u(s),p(s)]\,\psi+\ntilde F_u^\ast[u(s),p(s)]\,\Phi\Bigr\rangle_\cX\nonumber \\[2ex]
&&\mbox{} + \Bigl\langle\frac{\partial\varphi}{\partial\Phi},\ntilde A_p^\ast[u(s),p(s)]\,\psi+\ntilde B_p^\ast[u(s),p(s)]\,\psi+\ntilde F_p^\ast[u(s),p(s)]\,\Phi\Bigr\rangle_\cE
\end{eqnarray}
with initial condition $\varphi(0,\psi,\Phi)$, where $\varphi$ and its partial derivatives $\frac{\partial\varphi}{\partial t},\,\frac{\partial\varphi}{\partial\psi}$ and $\frac{\partial\varphi}{\partial\Phi}$ are evaluated at $(t,\psi,\Phi)$ and hence are elements of $\mathbb{C},\, L(\rr,\mathbb{C})=\mathbb{C}$, $L(\cX,\mathbb{C})$ and $L(\cE,\mathbb{C})$, respectively. But this equation is precisely \eqref{the_complex_equality_in_theorem}. Furthermore, as the equation \eqref{the_complex_equality} is linear in $\varphi$ uniqueness of solutions to given initial conditions follows from the uniqueness of solutions started at $\varphi(0,\psi,\Phi)=0$. However, the uniqueness of this solution is easily obtained by adapting the method of characteristics to partial differential equations in Hilbert spaces. We can thus uniquely characterise the one-dimensional distributions of the limit via the characteristic functions given by the unique solution to \eqref{the_complex_equality} started at the limit of the initial distributions. At the end of this section, see Section \ref{section_proof_conclusion}, we prove that \eqref{the_complex_equality} uniquely characterises all finite-dimensional distributions. Let us consider first as an intermediate step a particular family of solutions to \eqref{the_complex_equality}.


\subsection{Gaussian solution to the characteristic equation \eqref{the_complex_equality_in_theorem}}

In this part of the proof we show that the limit possesses one-dimensional Gaussian marginals if the initial condition is Gaussian. Thus we recall that if a process $Z$ is Gaussian, then its one-dimensional characteristic functions satisfy
\begin{equation}\label{Gaussian_process_characteristic}
\varphi(t,\psi,\Phi)=\exp\Bigl(i\,\langle m(t),(\psi,\Phi) \rangle_{\cX\times\cE}-\tfrac{1}{2}\langle C(t)(\psi,\Phi),(\psi,\Phi) \rangle_{\cX\times\cE}\Bigr)
\end{equation}
for families of $m(\cdot)\in \cX^\ast\times\cE^\ast$ and self-adjoint, non-negative trace class operators $C(t):\cX\times\cE\to\cX^\ast\times\cE^\ast$. That is, \eqref{Gaussian_process_characteristic} states that the one-dimensional distributions are Gaussian with mean $m(\cdot)$ and covariance $C(\cdot)$ \cite{DaPratoZabczyk}. 
We prove in the following that equation \eqref{the_complex_equality_in_theorem} possesses a solution of the form \eqref{Gaussian_process_characteristic}. Note that we assume the initial condition is Gaussian, hence there exists a non-negative, self-adjoint trace class operator $\mathcal{R}_0$ and an element $m_0$ such that \eqref{Gaussian_process_characteristic} holds for $t=0$ with $C(0)=\mathcal{R}_0$ and $m(0)=m_0$. From here on we consider the spaces $\cX,\,\cE$ to be real Hilbert spaces.

\medskip
In order to obtain a solution of the form \eqref{Gaussian_process_characteristic} for the equation \eqref{the_complex_equality_in_theorem} (or \eqref{the_complex_equality}) we consider the ansatz
\begin{eqnarray}\label{diffusion_char_funct_ansatz}
\varphi(t,\psi,\Phi)&=&\exp\Bigl\{i\,\langle m_1(t),\psi\rangle_\cX+i\,\langle m_2(t),\Phi\rangle_\cE\\[1ex]
&&\phantom{xxxxx}-\frac{1}{2}\Bigl(\langle R_{11}(t)\psi,\psi\rangle_\cX+\langle R_{12}(t)\Phi,\psi\rangle_\cX+\langle R_{21}(t)\psi,\Phi\rangle_\cE+\langle R_{22}(t)\Phi,\Phi\rangle_\cE\Bigr)\Bigr\} \nonumber
\end{eqnarray}
where $m_1(t)\in\cX^\ast$ and $m_2(t)\in\cE^\ast$ and the trace class operators $R_{11}(t)\in L_1(\cX,\cX^\ast)$, $R_{22}(t)\in L_1(\cE,\cE^\ast)$, $R_{12}(t)\in L_1(\cE,\cX^\ast)$ and $R_{21}(t)\in L_1(\cX,\cE^\ast)$ are Fr\'echet differentiable with respect to $t$. Note, that the adjoint of a trace class operator is again of trace class. 
Moreover, we can use these operators to define a trace class operator $\mathcal{R}\in L_1\bigl(\cX\times\cE,\cX^\ast\times\cE^\ast\bigr)$ such that $S(t):=\bigl\langle\mathcal{R}(t)\,(\psi_2,\Phi_2),(\psi_1,\Phi_1)\bigr\rangle_{\cX\times\cE}$ satisfies
\begin{eqnarray}
S(t)&=& \langle R_{11}(t)\psi_2,\psi_1\rangle_\cX+\langle R_{22}(t)\Phi_2,\Phi_1\rangle_\cE\nonumber\\[2ex]&&\qquad\qquad\mbox{}+\langle R_{12}(t)\Phi_2,\psi_1\rangle_\cX+\langle R_{21}(t)\psi_2,\Phi_1\rangle_\cE\phantom{xxxxxxxx}\label{first_definition_full_exponent}\\[2ex]
&=& \langle R_{11}^\ast(t)\psi_1,\psi_2\rangle_\cX+\langle R_{22}^\ast(t)\Phi_1,\Phi_2\rangle_\cE\nonumber\\[2ex]&&\qquad\qquad\mbox{}+\langle R_{12}^\ast(t)\psi_1,\Phi_2\rangle_\cE+\langle R_{21}^\ast(t)\Phi_1,\psi_2\rangle_\cX\,.\label{second_definition_full_exponent}\phantom{xxxxxxxx}
\end{eqnarray}
The operator $\mathcal{R}$ is self-adjoint if the operators $R_{11}$ and $R_{22}$ are and if $R_{12}=R^\ast_{21}$. However, these conditions are \emph{not} assumed a-priori. Nevertheless, using \eqref{first_definition_full_exponent} we find that the second part in the  exponent in \eqref{diffusion_char_funct_ansatz} equals $-\tfrac{1}{2}\bigl\langle\mathcal{R}(t)\,(\psi,\Phi),(\psi,\Phi)\bigr\rangle_{\cX\times\cE}$\,. The strategy is now to show that there exists a solution to \eqref{the_complex_equality_in_theorem} of the form \eqref{diffusion_char_funct_ansatz} and that $\mathcal{R}(t)$ is a suitable Gaussian covariance operator $C(t)$, i.e., $\mathcal{R}(t)$ is non-negative and self-adjoint.

\medskip

By the chain rule we obtain the partial derivatives of $\varphi$ with respect to $t, t,\psi$ and $\Phi$. Using the equality of the exponent in \eqref{diffusion_char_funct_ansatz} to \eqref{second_definition_full_exponent} we insert these partial derivatives into equation \eqref{the_complex_equality} and compare the coefficients. We obtain for the mean values

\begin{subequations}\label{op_valued_real_equations}
\begin{eqnarray}
\bigl\langle\frac{\partial m_1(t)}{\partial t},\psi\bigr\rangle_\cX&=& \bigl\langle m_1(t),\ntilde A^\ast(p(s))\,\psi+\ntilde B_u^\ast[u(s),p(s)]\,\psi\bigr\rangle_\cX\,,\nonumber\\[1ex]
&&\mbox{}+ \bigl\langle m_2(t),\ntilde A_p^\ast[u(s),p(s)]\,\psi+\ntilde B_p^\ast[u(s),p(s)]\,\psi\bigr\rangle_\cE\\[2ex]
\bigl\langle\frac{\partial m_2(t)}{\partial t},\Phi\bigr\rangle_\cX&=& \bigl\langle m_1(t),\ntilde F_u^\ast[u(s),p(s)]\,\Phi\Bigr\rangle_\cX+\bigl\langle m_2(t),\ntilde F_p^\ast[u(s),p(s)]\,\Phi\bigr\rangle_\cE
\end{eqnarray}
and for the covariance operators
\begin{eqnarray}
\bigl\langle\frac{\partial R_{11}(t)}{\partial t}\psi,\psi\bigr\rangle_\cX&=& \bigl\langle\bigl(R_{11}(t)+R^\ast_{11}(t)\bigr)\psi, \ntilde A^\ast(p(s))\,\psi+\ntilde B_u^\ast[u(s),p(s)]\,\psi\bigr\rangle_\cX\nonumber\\[2ex]
&&\mbox{}+\bigl\langle \bigl(R_{12}^\ast(t)+R_{21}(t)\bigr)\psi,\ntilde A_p^\ast[u(s),p(s)]\,\psi+\ntilde B_p^\ast[u(s),p(s)]\,\psi\bigr\rangle_\cE\\[2ex]
&=&\bigl\langle\frac{\partial R_{11}^\ast(t)}{\partial t}\psi,\psi\bigr\rangle_\cX\,,\nonumber
\end{eqnarray}
\begin{eqnarray}
\bigl\langle\frac{\partial R_{22}(t)}{\partial t}\Phi,\Phi\bigr\rangle_\mathcal{E}&=& \bigl\langle G(u(s),p(s))\Phi,\Phi\bigr\rangle_\cE + \bigl\langle \bigl(R_{12}(t)+R_{21}^\ast(t)\bigr)\Phi,\ntilde F_u^\ast[u(s),p(s)]\,\Phi\bigr\rangle_\cX\nonumber\\[2ex]
&&\mbox{}+ \bigl\langle\bigl(R_{22}(t)+R^\ast_{22}(t)\bigr)\Phi, \ntilde F_p^\ast[u(s),p(s)]\,\Phi\bigr\rangle_\mathcal{E}\\[2ex]
&=& \bigl\langle\frac{\partial R_{22}^\ast(t)}{\partial t}\Phi,\Phi\bigr\rangle_\mathcal{E},\nonumber
\end{eqnarray}
\begin{eqnarray}
\bigl\langle\frac{\partial R_{12}(t)}{\partial t}\Phi,\psi\bigr\rangle_\cX &=& \bigl\langle R^\ast_{11}(t)\psi, \ntilde F_u^\ast[u(s),p(s)]\,\Phi\bigr\rangle_\cX + \bigl\langle R_{12}(t)\Phi,
\ntilde A^\ast(p(s))\,\psi+\ntilde B_u^\ast[u(s),p(s)]\,\psi\bigr\rangle_\cX\nonumber\\[2ex]
&&\mbox{}+ \bigr\langle R_{22}(t)\Phi,\ntilde A_p^\ast[u(s),p(s)]\psi+\ntilde B_p^\ast[u(s),p(s)]\,\psi\bigr\rangle_\cE\phantom{xxxxxx}\\[2ex]
&&\mbox{}+\bigl\langle R_{12}^\ast(t)\psi,\ntilde F_p^\ast[u(s),p(s)]\,\Phi\bigr\rangle_\cE\nonumber\\[2ex]
&=&\bigl\langle\frac{\partial R_{21}^\ast(t)}{\partial t}\Phi,\psi\bigr\rangle_\cX \nonumber
\end{eqnarray}
and
\begin{eqnarray}
\bigl\langle\frac{\partial R_{21}(t)}{\partial t}\psi,\Phi\bigr\rangle_\cE&=& \bigl\langle R_{11}(t)\psi, \ntilde F_u^\ast[u(s),p(s)]\,\Phi\bigr\rangle_\cX\nonumber+ \bigl\langle R_{21}^\ast(t)\Phi,
\ntilde A^\ast(p(s))\,\psi+\ntilde B_u^\ast[u(s),p(s)]\,\psi\bigr\rangle_\cX\nonumber\\[2ex]
&&\mbox{}+ \bigr\langle R^\ast_{22}(t)\Phi,\ntilde A_p^\ast[u(s),p(s)]\psi+\ntilde B_p^\ast[u(s),p(s)]\,\psi\bigr\rangle_\cE\phantom{xxxxxx}\\[2ex]
&&\mbox{}+ \bigl\langle R_{21}(t)\psi,\ntilde F_p^\ast[u(s),p(s)]\,\Phi\bigr\rangle_\cE\nonumber\\[2ex]
&=&\bigl\langle\frac{\partial R_{12}^\ast(t)}{\partial t}\psi,\Phi\bigr\rangle_\cE\nonumber\,.
\end{eqnarray}
\end{subequations}
These real equations \eqref{op_valued_real_equations} are satisfied for all $\psi,\Phi$ if and only if the mean values satisfy the following system of Hilbert-valued differential equations
\begin{equation}\label{banach_ode_pt1}
\left.\begin{array}{rcl}
\dot m_1 &=& (\ntilde A+\ntilde B_u)\,m_1+(\ntilde A_p+\ntilde B_p)\,m_2,\\[2ex]
\dot m_2 &=& \ntilde F_u\,m_1+\ntilde F_p\,m_2
\end{array}\right.
\end{equation}
and the covariance operators satisfy the following system of operator-valued differential equations
\begin{equation}\label{banach_ode_pt2}
\left.\begin{array}{rcl}
\dot R_{11} &\!\!=&\!\! R_{11}\circ(\ntilde A^\ast+\ntilde B_u^\ast) +R_{12}\circ(\ntilde A_p^\ast+\ntilde B_p^\ast) + (\ntilde A+\ntilde B_u)\circ R_{11} +(\ntilde A_p+\ntilde B_p)\circ R_{21},\\[2ex]
\dot R_{22} &\!\!=&\!\! R_{22}\circ \ntilde F_p^\ast+R_{21}\circ \ntilde F_u^\ast + \ntilde F_p\circ R_{22} + \ntilde F_u\circ R_{12} + G,\\[2ex]
\dot R_{12} &\!\!=&\!\! R_{11}\circ\ntilde F_u^\ast+R_{12}\circ\ntilde F_p^\ast+(\ntilde A+\ntilde B_u)\circ R_{12} + (\ntilde A_p+\ntilde B_p)\circ R_{22},\\[2ex]
\dot R_{21} & \!\!=&\!\! R_{21}\circ (\ntilde A^\ast+\ntilde B^\ast_u) + R_{22}\circ (\ntilde A_p^\ast + \ntilde B_p^\ast) +\ntilde F_u\circ R_{11} + \ntilde F_p\circ R_{21}\,.
\end{array}\right.
\end{equation}
Note, that in the equations \eqref{banach_ode_pt1} and \eqref{banach_ode_pt2} we omitted the time-dependence of the derivatives in the right hand sides.
Thus the systems \eqref{banach_ode_pt1} and \eqref{banach_ode_pt2} decouple in two closed systems of differential equations, the first takes values in the space $\cX^\ast\times\cE^\ast$ and the second is operator-valued. 

We next show that these systems possess a unique global solution to any given initial condition as non-autonomous (the derivatives being evaluated along $(u(s),p(s))$ depend on time) differential equations in Banach spaces. First, this is straight forward for system \eqref{banach_ode_pt1}: by assumption the operators in its right hand side are linear and bounded. Hence, its right hand side is Lipschitz continuous. Moreover, the Lipschitz constant is bounded over $[0,T]$ for every $T>$0, since the derivatives of $\ntilde A,\, \ntilde B$ and $\ntilde F$ are continuous in $t$ due to assumption \assref{C}{ass_C_differentiability}. Thus the Picard-Lindel\"of Theorem for Banach-valued differential equations, see, e.g., \cite[Sec.~1.1]{Deimling}, implies existence of a unique global solution of \eqref{banach_ode_pt1} to any initial condition.

We proceed to system \eqref{banach_ode_pt2}. The composition of a trace class operator with a linear, bounded operator is again of trace class. Therefore we can interpret the composition of a linear mapping between spaces of trace class operators in the following sense. Let $Y_i$, $i=1,2,3$, be Hilbert spaces, $O_1\in L_1(Y_1,Y_2)$ be a trace class operator and $O_2\in L(Y_2,Y_3)$ a linear, bounded operator. Then $O_1\circ O_2\in L_1(Y_1,Y_3)$ is a trace class operator and $O_2\circ: L_1(Y_1,Y_2)\to L_1(Y_1,Y_3): O\mapsto O_2\circ O$ is a linear map between two Banach spaces. Moreover, this map is bounded:
\begin{eqnarray*}
\big\|O_2\circ\big\|_{L\bigl(L_1(Y_1,Y_2),L_1(Y_2,Y_3)\bigr)} &=& \sup_{\|O\|_{L_1(Y_1,Y_2)}\leq1} \|O_2\circ O\|_{L_1(Y_1,Y_3)} \\[2ex]
&\leq& \sup_{\|O\|_{L_1(Y_1,Y_2)}\leq1} \|O_2\|_{L(Y_2,Y_3)} \|O\|_{L_1(Y_1,Y_2)}\ = \ \|O_2\|_{L(Y_1,Y_2)}\,.
\end{eqnarray*}
Analogously, we obtain that for $O_1\in L_1(Y_2,Y_3)$ and $O_2\in L(Y_1,Y_2)$,  $O_1\mapsto O_1\circ O_2$ defines a linear, bounded mapping from $L_1(Y_2,Y_3)$ into $L_1(Y_1,Y_3)$. 
Therefore the set of operator-valued differential equations \eqref{banach_ode_pt2} is a linear, non-autonomous and inhomogeneous system (because of the term $G(u(s),p(s))$) in a vector Banach space of trace class operators. Thus it can also be analysed using the theory of ordinary differential equations in Banach spaces. 
The right hand side of the system \eqref{banach_ode_pt1}, \eqref{banach_ode_pt2} is linear, hence it is Lipschitz continuous with a uniform Lipschitz constant on any bounded interval $[0,T]$ due to assumption \assref{C}{ass_C_differentiability}. As before the Picard-Lindel\"of Theorem yields existence of a unique global solution of \eqref{banach_ode_pt2} to any initial condition. \medskip

Finally, in order for the trace class operators $\mathcal{R}(t)$, $t\geq 0$, --  the solution of the system \eqref{banach_ode_pt2} with initial condition $\mathcal{R}(0)=\mathcal{R}_0$ -- to define covariance operators of a Gaussian random variables we need additionally that they are (i) self-adjoint and (ii) that the map $(\psi,\Phi)\mapsto\bigl\langle\mathcal{R}(t)\,(\psi,\Phi),(\psi,\Phi)\bigr\rangle_{\cX\times\cE}$ is non-negative for all $t\geq 0$. By assumption the initial condition of the limiting process is Gaussian. Hence there exists a mean value $m_0\in\cX^\ast\times\cE^\ast$ and a non-negative, self-adjoint trace class operator $\mathcal{R}_0$ as its variance. Furthermore $\mathcal{R}_0$ is of the form \eqref{first_definition_full_exponent} for self-adjoint trace class operators $R_{11}^0\in L_1(\cX,\cX^\ast)$ and $R_{22}^0\in L_1(\cE,\cE^\ast)$ and operators $R_{12}(0)\in L_1(\cE,\cX^\ast)\in $ and $R_{21}(0)\in L_1(\cX,\cE^\ast)$ such that $R_{12}^\ast(0)=R_{21}(0)$. The mean values and covariance operators of the one-dimensional marginals are thus given by the solution to \eqref{banach_ode_pt1} and \eqref{banach_ode_pt2}, respectively, started at these initial conditions.
Using the adjoints in the ansatz \eqref{diffusion_char_funct_ansatz} we obtain the equations
\begin{equation}\label{banach_ode_pt2_b}
\left.\begin{array}{rcl}
\dot R_{11}^\ast &\!\!=&\!\! R_{11}\circ(\ntilde A^\ast+\ntilde B_u^\ast) +R_{12}\circ(\ntilde A_p^\ast+\ntilde B_p^\ast) + (\ntilde A+\ntilde B_u)\circ R_{11} +(\ntilde A_p+\ntilde B_p)\circ R_{21},\\[2ex]
\dot R_{22}^\ast &\!\!=&\!\! R_{22}\circ \ntilde F_p^\ast+R_{21}\circ \ntilde F_u^\ast + \ntilde F_p\circ R_{22} + \ntilde F_u\circ R_{12} + G,\\[2ex]
\dot R_{21}^\ast &\!\!=&\!\! R_{11}\circ\ntilde F_u^\ast+R_{12}\circ\ntilde F_p^\ast+(\ntilde A+\ntilde B_u)\circ R_{12} + (\ntilde A_p+\ntilde B_p)\circ R_{22},\\[2ex]
\dot R_{12}^\ast & \!\!=&\!\! R_{21}\circ (\ntilde A^\ast+\ntilde B^\ast_u) + R_{22}\circ (\ntilde A_p^\ast + \ntilde B_p^\ast) +\ntilde F_u\circ R_{11} + \ntilde F_p\circ R_{21}\,.
\end{array}\right.
\end{equation}
Comparing the right hand sides of these equations and of their respective adjoint operators in \eqref{banach_ode_pt2} we find that they coincide. Thus we infer that $R_{11}(t)$ and $R_{22}(t)$ are self-adjoint and $R_{12}(t)=R^\ast_{21}(t)$ for any time $t\geq 0$ as this holds for the initial conditions.
Thus it remains to establish the non-negativity of the operator $\mathcal{R}(t)$. But the map $\exp\{-\tfrac{1}{2}\bigl\langle\mathcal{R}(t)\,(\psi,\Phi),(\psi,\Phi)\bigr\rangle_{\cX\times\cE}\}$ is the unique solution to \eqref{the_complex_equality} with mean value function $m(t)\equiv 0$ and initial covariance $\mathcal{R}_0$. Therefore it is necessarily the characteristic function of a random variable and consequently $\exp\{-\tfrac{1}{2}\bigl\langle\mathcal{R}(t)\,(\psi,\Phi),(\psi,\Phi)\bigr\rangle_{\cX\times\cE}\}\leq 1$. From this we infer that $\mathcal{R}(t)$ is positive.

\subsubsection{Conclusion of the proof}\label{section_proof_conclusion}

Clearly, for a stochastic process to be Gaussian it is not sufficient that its one-dimensional distributions, i.e., \eqref{Gaussian_process_characteristic}, are Gaussian but all finite-dimensional distributions have to be jointly Gaussian. Analogously, in order to uniquely define an arbitrary limiting process we have to characterise all its finite-dimensional distributions. 
Thus, we now complete the proof of Theorem \ref{fluctuation_limit_theorem} showing that all the finite-dimensional distributions are characterised via the solution of \eqref{the_complex_equality}. We detail the step obtaining the two-dimensional from the one-dimensional characteristic function. The general finite-dimensional case follows by induction.

Due to the Markov property of the PDMPs $(U^n,\Theta^n,u,p)$ or the processes $(Z^n,u,p)$, respectively, the characteristic function of the two-dimensional distribution $\varphi^n_2$ of $Z^n$ at fixed times $t_1<t_2$ satisfies
\begin{eqnarray*}
\varphi^n_2(t_1,t_2,\psi_1,\psi_2,\Phi_1,\Phi_2) &=& \EX^n\Bigl[h(Z^n_{t_1},\psi_1,\Phi_1)h(Z^n_{t_2},\psi_2,\Phi_2)\Bigr]\\
&=& \EX^n\Bigl[h(Z^n_{t_1},\psi_1,\Phi_1)\,\EX^n_{Z_{t_1}^n,u(t_1),p(t_1)}\bigl[h(Z^n_{t_2-t_1},\psi_2,\Phi_2)\bigr]\Bigr]\\
&:=& \EX^n\Bigl[h(Z^n_{t_1},\psi_1,\Phi_1)\,g^n({Z_{t_1}^n},u(t_1),p(t_1),t_2-t_1,\psi_2,\Phi_2)\Bigr]\,,
\end{eqnarray*}
where $\EX^n_{\bullet}$ denotes the expectation with respect to the law of a PDMP $Z^n$ started almost surely in the point as indicated by the index. Clearly, this starting point has to be an attainable value for the random variable $Z^n_{t_1}$. These are not necessarily all elements of $\cX^\ast\times\cE^\ast$ and we denote this set of values by $\mathcal{K}_n$. 
We note that the function $g^n$ thus defined is bounded and measurable. Next, we denote by $g$ the characteristic function of $Z_{t_2-t_1}$ started almost surely at $z$ and thus given by the solution to \eqref{the_complex_equality} at time $t_2-t_1$ with respect to a degenerate Gaussian initial condition centred at $z$. 
The difficulty for taking the limit is that $g^n$ is usually not defined on the same set as $g$ but only on a set $\mathcal{K}_n\subseteq\sqrt{\alpha}(H\times z^n(K^n))$ possessing full measure, i.e., $\Pr^n[Z^n_{t_1}\in\mathcal{K}_n]=1$. We thus have to first extend $g^n$ to a function $\ntilde g^n$ on all of $X^\ast\times \cE^\ast$ via
\begin{equation*}
\ntilde g_n(z):=\left\{\begin{array}{cl} g_n(z)& \tn{if } z\in\mathcal{K}_n,\\[1ex]
g(z) & \tn{if } z\notin\mathcal{K}_n\,.
\end{array}\right.
\end{equation*}
With this extension it holds
\begin{equation*}
\varphi^n_2(t_1,t_2,\psi_1,\psi_2,\Phi_1,\Phi_2)=\EX^n\Bigl[h(Z^n_{t_1},\psi_1,\Phi_1)\,\ntilde g_n({Z_{t_1}^n},u(t_1),p(t_1),t_2-t_1,\psi_2,\Phi_2)\Bigr]\,.
\end{equation*}
Then we define the set
\begin{equation*}
\mathcal{S}:=\{z\in\cX^\ast\times\cE^\ast: \ntilde g_n(z_n)\to g(z) \ \forall\,z_n\to z\}\,.
\end{equation*}
Due to the foregoing convergence result and the continuity of $g$ we immediately find that $\Pr[Z_{t_1}\in \mathcal{S}]=1$. Here we have essentially two cases for $z^n\to z$. If all but finitely many elements of the sequence are such that $z^n\in\mathcal{K}_n$ then $g_n(z_n)\to g(z)$ holds due to the foregoing convergence result. On the other hand, if all but finitely many elements are such that $z^n\notin\mathcal{K}_n$ then $\ntilde g_n(z^n)=g(z^n)\to g(z)$ holds due to the continuity of $g$. All other possibilities are mixtures of such sequences and the convergence $\ntilde g_n(z^n)\to g(z)$ thus also holds.

Therefore, \cite[Thm.~5.5]{Billingsley} establishes that $\ntilde g_n(Z^n_{t_1})$ converges weakly to $g(Z_{t_1})$ which yields for the two-dimensional characteristic function the convergence 
\begin{eqnarray*}
\varphi_2(t_1,t_2,\psi_1,\psi_2,\Phi_1,\Phi_2)&=& \lim_{n\to\infty} \EX^n\bigl[h(Z^n_{t_1},\psi_1,\Phi_1)\,\ntilde g_n({Z_{t_1}^n},u(t_1),p(t_1),t_2-t_1,\psi_2,\Phi_2)\bigr]\\
&=&\EX\bigl[h(Z_{t_1},\psi_1,\Phi_1)\,g(Z_{t_1},u(t_1),p(t_1),t_2-t_1,\psi_2,\Phi_2)\bigr]\,.
\end{eqnarray*}
Here the left hand side $\varphi_2$ is the two-dimensional characteristic function of the limiting process $Z$. We conclude the proof showing that $\varphi_2$ is uniquely defined from \eqref{the_complex_equality}.
We have that $g(z,t_1,t_2-t_1)$ -- omitting the remaining arguments -- is the characteristic function given by the solution of \eqref{the_complex_equality} started at the characteristic function of the degenerate Gaussian law centered at $z$, i.e., the initial condition to \eqref{the_complex_equality} is given by $\exp( i\langle (\psi_2,\Phi_2),z\rangle)$, at time $t_1$ in $z$ after time $t_2-t_1$. Thus $g(z,t_1,t_2-t_1)$ is of the form \eqref{Gaussian_process_characteristic} where the mean value function at time $t_2-t_1$ is given by the solution to \eqref{banach_ode_pt1} started in $t_1$ at $z$. Due to the general existence theorem of linear equations in Banach spaces there exists a linear, bounded operator $M(t_1,t_2)$ such that this mean function is given by $M(t_1,t_2) z$, i.e., the first part of the exponent is $\langle M(t_1,t_2) Z_{t_1},(\psi_2,\Phi_2)\rangle_{\cX\times\cE}=\langle Z_{t_1},M(t_1,t_2)^\ast(\psi_2,\Phi_2)\rangle_{\cX\times\cE}$. Secondly, the covariance operator $\mathcal{R}$ at time $t_2-t_1$ is given by the solution to \eqref{banach_ode_pt2} started in the origin at $t_1$ which we denote by $\mathcal{R}_0(t_1,t_2)$. Note that it is independent of $z$. Therefore we obtain
\begin{eqnarray}\label{final_form_twodim_char_dist}
\varphi_2(t_1,t_2,\psi_1,\psi_2,\Phi_1,\Phi_2)&=&\EX\Bigl[\e^{i\langle Z_{t_1},(\psi_1,\Phi_1)+M^\ast(t_1,t_2)(\psi_2,\Phi_2)\rangle_{\cX\times\cE}}\Bigr] \\
&&\phantom{xxxxxxxxxxxxxx}\times\,\exp\Bigl\{-\frac{1}{2}\langle \mathcal{R}_0(t_1,t_2)(\psi_2,\Phi_2),(\psi_2,\Phi_2)\rangle_{\cX\times\cE}\Bigr\},\nonumber
\end{eqnarray}
where the remaining expectation in the right hand side is a one-dimensional characteristic function uniquely given by \eqref{the_complex_equality}. Hence, all two-dimensional characteristic functions are uniquely defined. The analogous result for all finite-dimensional equations follows by induction. We furthermore infer from \eqref{final_form_twodim_char_dist} that the limiting process is a Gaussian process, if the limit of the initial conditions is Gaussian.

\subsection{Proof of Corollary \ref{corollary_spde_representation}}

If the initial conditions of the PDMPs admit a Gaussian limit then the limiting process is under Theorem \ref{fluctuation_limit_theorem} a Gaussian process as is the mild solution to an stochastic evolution equation of the form \eqref{general_spde} started in a Gaussian initial condition (cf. \cite{DaPratoZabczyk}). Hence, it suffices to show that the finite-dimensional characteristic functions coincide. In the following we first show that the one-dimensional marginals coincide, i.e., we show that the mean and covariance of the solution of the SPDE satisfy \eqref{banach_ode_pt1} and \eqref{banach_ode_pt2}, respectively, and afterwards extend the result by induction to all finite-dimensional distributions. For future use we repeat these equations which we write in simplified matrix notation, i.e., for the equation of the mean values \eqref{banach_ode_pt1} we write
\begin{equation}\label{banach_ode_pt1_repeat}
\left(\!\!\begin{array}{c} \dot m_1 \\ \dot m_2 \end{array}\right)= \left(\!\!\begin{array}{cc} \ntilde A+\ntilde B_u & \ntilde A_p+\ntilde B_p\\ \ntilde F_u & \ntilde F_p\end{array}\right)\left(\!\!\begin{array}{c} m_1 \\ m_2 \end{array}\right),
\end{equation}
where the matrix-vector multiplication is understood as applications of operators, and similarly for the equation for the covariance operator \eqref{banach_ode_pt2}
\begin{eqnarray}\label{banach_ode_pt2_repeat}
\lefteqn{\left(\!\!\begin{array}{cc} \dot R_{11} & \dot R_{12} \\ \dot R_{21} & \dot R_{22} \end{array}\right)}\\
&=& \left(\!\!\begin{array}{cc} R_{11} & R_{12} \\ R_{21} & R_{22}\end{array}\right)\left(\!\!\begin{array}{cc} \ntilde A^\ast+\ntilde B^\ast_u & \ntilde F_u^\ast \\ \ntilde A^\ast_p+\ntilde B^\ast_p & \ntilde F_p^\ast\end{array}\right)+\left(\!\!\begin{array}{cc} \ntilde A+\ntilde B_u & \ntilde A_p+\ntilde B_p\\ \ntilde F_u & \ntilde F_p\end{array}\right)\left(\!\!\begin{array}{cc} R_{11} & R_{12} \\ R_{21} & R_{22}\end{array}\right)+\left(\!\!\begin{array}{cc} 0 & 0 \\ 0 & G\end{array}\right),\nonumber
\end{eqnarray}
where the matrix-matrix multiplication is understood as composition of operators. We denote by $S(s,t)$, $s\leq t$, the two-parameter semigroup of linear operators on $\cX^\ast\times\cE^\ast$ defined by the solution of the differential equation for the mean, i.e., $S(s,t) m$ is the solution to \eqref{banach_ode_pt1_repeat} at time $t$ started in $m$ in time $s$.
Then the unique mild solution to \eqref{general_spde} is given by
\begin{equation}\label{general_mild_solution}
\left(\!\!\begin{array}{c} U_t\\[2ex] P_t\end{array}\!\!\right) = S(0,t)\left(\!\!\begin{array}{c} U_0\\[2ex] P_0\end{array}\!\!\right) + \int_0^t S(s,t)\left(\!\!\begin{array}{c}  0\\[2ex]  g(s)\end{array}\!\!\right)\,\dif  W^Q_s\,, 
\end{equation}
where $W^Q$ is a Wiener process on a Hilbert space $H'$ with covariance $Q\in L(H')$, $g(t)\in L(H',\cE^\ast)$ and $0$ denotes the origin in $L(H',\cX^\ast)$.
Thus taking expectations on both sides of \eqref{general_mild_solution} and differentiating with respect to $t$ we immediately obtain that the mean value function of the mild solution \eqref{general_mild_solution} satisfies \eqref{banach_ode_pt1_repeat} with initial condition given by the expectation of $(U_0,P_0)$.

\medskip

It remains to consider the covariance operator. We denote by $Q_0$ the covariance operator of the initial condition and by $Q_Y(t)$ the covariance of the stochastic convolution 
\begin{equation*}
Y_t=\int_0^t S(s,t)(0,g(s))^T\,\dif W^Q_s
\end{equation*}
which is given by (cf. \cite{DaPratoZabczyk})
\begin{equation*}
Q_Y(t) = \int_0^t \Bigl(S(t,s)(0,g(s))^T\Bigr)Q\Bigl(S(t,s)(0,g(s))^T\Bigr)^\ast\,\dif s\,\in L(\cX^\ast\times\cE^\ast,\cX^\ast\times\cE^\ast)\,.
\end{equation*} 

Thus the covariance of $Z_t$ is given by $Q_Z(t)=S(0,t)Q_0S^\ast(0,t)+Q_Y(t)$. Here all the adjoints are understood as Hilbert space adjoints. Then $Q_Z(t)$ satisfies the initial condition $Q_Z(0)=Q_0$, and differentiating with respect to $t$ yields
\begin{equation}
\frac{\dif Q_Z(t)}{\dif t}
=\left(\!\!\begin{array}{cc} \ntilde A+\ntilde B_u & \ntilde A_p+\ntilde B_p\\ \ntilde F_u & \ntilde F_p\end{array}\right)Q_Z(t)+Q_Z(t)\left(\!\!\begin{array}{cc} \ntilde A^\ast+\ntilde B^\ast_u & \ntilde F_u^\ast \\ \ntilde A^\ast_p+\ntilde B^\ast_p & \ntilde F_p^\ast\end{array}\right)+\left(\!\!\begin{array}{cc} 0 & 0 \\ 0 & G\end{array}\right).
\end{equation}
Thus $Q_Z$ satisfies the differential equation for the covariance of the limit \eqref{banach_ode_pt2_repeat}. Thus the mild solution of the SPDE possesses the same one-dimensional marginals as the limiting process characterised by \eqref{the_complex_equality_in_theorem}. The Markov property of the SPDE solution yields that the two-dimensional characteristic functions are equal to \eqref{final_form_twodim_char_dist}. The general statement that the finite-dimensional characteristic functions agree with the finite-dimensional characteristic functions of the limit characterised by \eqref{the_complex_equality_in_theorem} follows by induction. The proof is complete.
%

\section{Applications to Electrophysiology}\label{section_applications}

In this section we apply the main result, Theorem \ref{fluctuation_limit_theorem}, to obtain limits for the global fluctuations of stochastic microscopic models around their deterministic limit. These limits are classical deterministic spatio-temporal models in neuroscience. The first one is a simple version of the Hodgkin-Huxley model (cf.~\cite{Koch}) for an excitable membrane reduced to its essentials, which is used not only to model neuronal membranes but also intracellular tissues in calcium dynamics or cardiac tissues. Secondly, we consider a neural field model for the large scale activity in macroscopic brain areas. The corresponding deterministic limit in this case is the Wilson-Cowan equation \cite{Bressloff_review}. Internal fluctuations for such models have been considered in previous work applying the law of large numbers (cf.~Theorem \ref{theorem_lln}) and the martingale limit theorem \cite{RTW,BR_neural_fields}. For these models the spatial domain is a bounded domain $D\subset \rr^d$ with sufficiently regular boundary. The Hilbert spaces that thus occur as state spaces of the PDMPs are spaces of real functions on $D$, for instance, the Lebesgue space $L^2(D)$ or Sobolev Spaces $H^\alpha(D)$. Subsequently, with regards to brevity and simplicity of notation we simply write $L^2$ or $H^\alpha$ for these spaces, omitting the domain. Furthermore, products of elements of these spaces are always understood as pointwise products of real functions.

\begin{remark}[Spatial discretisation] In both applications we will study, we start with discretising the spatial domain $D$. For each $n\in\mathbb{N}$ we denote by $\pindex^n$ a decomposition of $D$ into $p(n)<\infty$ non-overlapping subdomains $D^{k,n}$, $k=1,\ldots,p(n)$. For technical reasons we assume that the subdomains $D^{k,n}$ are convex \cite{BR_neural_fields,RiedlerPhD,RTW}. Then, $\nu_{\pm}(n)$ and $\delta_{\pm}(n)$ denote the maximum / minimum Lebesgue measure and the maximum / minimum diameter of the subdomains in partition $\pindex^n$, respectively.  We refer to our previous work for a discussion of the existence of such partitions satisfying all the subsequent assumptions for a large class of domains containing all practically relevant geometries.
\end{remark}

\begin{remark}[Hilbert scales]\label{Hilbert_scales} Another important concept are Hilbert scales. A Hilbert scale is a family of Hilbert spaces $H_\alpha$ indexed by $\alpha\in\rr_+$ which satisfy $H_{\alpha_1}\hookrightarrow H_{\alpha_2}$ for $\alpha_1>\alpha_2$. This extends to Hilbert scale indexed on $\alpha\in\rr$ if we set $H_{-\alpha}$ the dual space of $H_\alpha$ with the following identification of inner product and duality pairing
\begin{equation}
\langle\phi,x\rangle_{H_{-\alpha}}=(\phi,x)_{H_0}\quad\forall\,\phi\in H_0,\,x\in H_\alpha\,. 
\end{equation}
As we observe from this identification the Hilbert space $H_0$ plays a distinguished role as each other space $H_\alpha$, $\alpha>0$, forms an evolution triplet with $H_0$. Examples of Hilbert scales are the standard Sobolev spaces $H^\alpha$, $\alpha\in\mathbb{N}$, on a bounded domain $D$ or Sobolev spaces $H^\alpha_0$ of functions vanishing at the boundary with the usual interpolation spaces for non-integer indices \cite{Adams}. Here the distinguished Hilbert space is given by $H^0=L^2$. A second example are Hilbert scales obtained by closing of operators, see, e.g., \cite{Blount1,Kotelenez1,Kotelenez2} in applications to limit theorems for spatio-temporal reaction-diffusion models. 

\noindent As an example consider the one-dimensional negative Laplacian $-\Delta$ with Dirichlet boundary conditions $D=[0,l]$, and the spaces defined by
\begin{equation}\label{definition_of_laplace_scale}
H_\alpha:=\Bigl\{g\in L^2:\,\sum_{i=1}^\infty (g,\varphi_i)_{L^2}^2(1+i^2)^\alpha\Bigr\}\,,
\end{equation}
where $\varphi_i(x)=\sqrt{2/l}\,\sin(\pi i x/l)$, $i\geq 1$, form a complete orthonormal basis in $L^2$ consisting of eigenvectors to the Laplacian. The inner product on this space is given by
\begin{equation*}
(g_1,g_2)_{H_\alpha}=\sum_{i=1}^\infty(g_1,\varphi_i)_{L^2}(g_2,\varphi_i)_{L^2}(1+\pi^2i^2/l^2)^\alpha\,.
\end{equation*}
Here, as before, the distinguished space satisfies $H_0=L^2$. As the Sobolev spaces above, the spaces \eqref{definition_of_laplace_scale} create a Hilbert scale. Moreover, these spaces are subspaces of the respective Sobolev spaces $H^\alpha$, where the norm induced by $(\cdot,\cdot)_{H_{-\alpha}}$ is equivalent to the restriction of the usual Sobolev norm \cite{Kotelenez2}. Thus in terms of embeddings and forming of evolution triplets this scale behaves as the standard Sobolev spaces. Another important result is the following \cite[Lemma 2.2]{Kotelenez2}: the Laplace operator $\Delta: H^{1}\to H^{-1}$ possesses an extension $\ntilde\Delta_{-\alpha}$ to an operator acting on $H_{-\alpha}$ for every $\alpha\geq 0$.
\end{remark}

Finally, we note that in the following we always use the notation $H_\alpha$ to denote the spaces defined  in \eqref{definition_of_laplace_scale}. \medskip

\subsection{Compartmental-type neuron models}

Models of excitable membranes describe the evolution of the transmembrane potential, that is the difference $V_{{\rm ext}}-V_{{\rm int}}$ of electrical potential between the outside and the inside of the cell. This evolution results from a complex electrochemical coupling between the membrane potential and the various ion channels located at some points of the membrane. The repartition of these channels is heterogeneous. In particular for neuronal membranes it is known that channels are present only at the Nodes of Ranvier. We first present a simple version of general excitable media models. However, the results extend immediately to more general versions (cf. \cite{RiedlerPhD,RTW}). For the spatial domain we restrict ourselves to a finite interval $D=[0,l]\subset\rr $. 
For this model the deterministic limit is given by the excitable membrane equation
\begin{equation}\label{excitable_membrane_system}
\left.\begin{array}{rcl}
 \dot u &=& \Delta u + p\cdot (\nbar v-u)\\[2ex]
\dot p &=& a(u)\cdot (1-p)-b(u)\cdot p
\end{array}\right.
\end{equation}
on $[0,l]$, with Dirichlet boundary conditions to conform with the exposition in Remark \ref{Hilbert_scales}. In \eqref{excitable_membrane_system} $\Delta$ denotes the Laplace operator, $\nbar v$ is some constant and $a,b:\rr\to\rr_+$ are non-negative smooth functions. The model \eqref{excitable_membrane_system} is stripped of any complicating constants and corresponds to an excitable membrane with currents due to a single family of ion channels which can be in two states, i.e., they are either open or closed. It was shown that systems of the form \eqref{excitable_membrane_system} possess unique global solutions which are componentwise in $C(\rr_+,H_2)$ and $C(\rr_+,H^2)$, respectively, for all initial condition $u_0\in H_2$ and $p_0\in H^2$ \cite{RiedlerPhD}.

\medskip

In the corresponding stochastic PDMP model, we get the family of abstract evolution equations
\begin{equation}
\dot u \,=\, \Delta u + z^n(\theta^n)\cdot(\nbar v-u) 
\end{equation}
equipped with the same boundary conditions as the deterministic model \eqref{excitable_membrane_system}. This yields that we choose for $X\subset H\subset X^\ast$ a standard evolution triplet in order to deal with reaction-diffusion equations, i.e.,
\begin{equation*}
H_1\subset L^2\subset H_{-1}\,.
\end{equation*}
Note that Maurin's Theorem implies that the embedding $H_1\hookrightarrow L^2$ is of Hilbert-Schmidt type as the spatial dimension is one. The piecewise constant component $\theta^n=(\theta^{1,n},\ldots,\theta^{p(n),n})\in K_n\subseteq\mathbb{N}^{p(n)}$ counts the number of open ion channels in the subdomains $D^{k,n}$. The Central Limit Theorem that we proved in the previous section enables us to specify the size of the intrinsic noise induced by the stochastic gating of the channels using parameters of the domain decomposition. One interest of our result is that we can, up to some extent, consider inhomogeneous channel repartition. More details are given in Theorem \ref{globalCLTcompartmental} below.

\bigskip 

\noindent Given a partition $\pindex^n$ of subdomains $D^{k,n}$, let us denote by $l(k,n)<\infty$ the total number of channels in domain $D^{k,n}$. We denote by $\ell_{\pm}(n)$ the maximum / minimum number of channels in the individual subdomains of $\pindex^n$. The transition dynamics of the piecewise constant components $\theta^n$ are given by the $(u,\theta^n)$-dependent rates
\begin{equation*}
\theta^n\to\nhat\theta^n \tn{ with rate } 
\left\{\begin{array}{cl}
b\displaystyle\bigl(\nbar u^{k,n}\bigr)\, \theta^{k,n}& \tn{if } \nhat\theta^{k,n}=\theta^{k,n}-1,\, \nhat\theta^{j,n}=\theta^{j,n},\, j\neq k,\\[2.5ex]
a\displaystyle\bigl(\nbar u^{k,n}\bigr)\, \bigl(l(k,n)-\theta^{k,n}\bigr)& \tn{if } \nhat\theta^{k,n}=\theta^{k,n}+1,\, \nhat\theta^{j,n}=\theta^{j,n},\, j\neq k,\\[2.5ex]
0 & \tn{otherwise}\,,
\end{array}\right.
\end{equation*}
where $\nbar u^{k,n}=|D^{k,n}|^{-1}\int_{D^{k,n}} u(x)\,\dif x$ is the average membrane potential over the $k$-th subdomain of $\pindex^n$. We see that the components of $\theta^n$ satisfy $\theta^{k,n}\in\{0,\ldots, l(k,n)\}$ and hence $K^n$ is finite. Finally, we repeat that the coordinate functions $z^n$ are given by \eqref{def_neuron_coordinate}, i.e.,
\begin{equation*}
z^n(\theta^n)=\sum_{k=1}^{p(n)} \frac{\theta^{k,n}}{l(k,n)}\,\mathbb{I}_{D^{k,n}} \in L^2\,
\end{equation*}
and thus $E=L^2$. It remains to fix the Hilbert spaces $\cE$ and $V$: we chose $V=H^\alpha$ and $\mathcal{E}=H^{2\alpha}$ for $\alpha\in(1/2,1]$ and Maurin's Theorem implies that the embedding $\cE \hookrightarrow V$ is of Hilbert-Schmidt type.

\medskip

If the initial conditions converge in probability and $\ell(n)_-\to\infty$, $\delta(n)_+\to 0$ the sequence of stochastic models $(U^n,z^n(\Theta^n))$ converges u.c.p.~to the solution $(u,p)$ of \eqref{excitable_membrane_system} in $L^2\times L^2$ \cite{RTW}. We now present the central limit theorem for the global fluctuations.

\begin{theorem}\label{globalCLTcompartmental} Let us assume that the rescaled difference in the macroscopic initial conditions converges to a Gaussian distribution $\mathcal{N}_0$ and that $\ell_-(n)\to\infty$, $\delta_+(n)\to 0$ with $\ell_-(n)\delta_+(n)\to 0$ and $\lim_{n\to\infty} \frac{\ell_-(n)\nu_-(n)}{\ell_+(n)\nu_+(n)}=1$. Let $\alpha\in(1/2,1]$. Then the sequence
\begin{equation*}
\sqrt{\tfrac{\ell_-(n)}{\nu_+(n)}}\Bigl(U^n-u,z^n(\Theta^n)-p\Bigr) 
\end{equation*}
converges weakly in $H_{-1}\times H^{-2\alpha}$ to the mild solution of the SPDE system
\begin{subequations}\label{compartmental_model_SPDE}
\begin{equation}
\left.\begin{array}{rcrl}
\dif U_t &=&  \displaystyle\Bigl[\bigl(\ntilde\Delta+p(t)\bigr)\, U_t+\bigl(\nbar v-u(s)\bigr)\, P_t\Bigr]\dif t &\\[3ex]
\dif P_t &=&  \ntilde F'\bigl[u(t),p(t)\bigr](U_t,P_t)\,\dif t & + g(t)\,\dif W_t^Q 
\end{array}\right.
\end{equation}
with 
\begin{equation}
\ntilde F'\bigl[u,p\bigr](U,P)\,=\,\bigl(a'(u)\,(1-p)-b'(u)\,p\bigr)\,U-\bigl(a(u)+b(u)\bigr)\, P
\end{equation}
and initial conditions $(U_0,P_0)$ distributed according to $\mathcal{N}_0$ on $H_{-1}\times H^{-2\alpha}$. The process $W^Q$ is a cylindrical Wiener process on a Hilbert space $H'$ with covariance operator $Q\in L(H')$ such that $g(t)\in L(H',H^{-2\alpha})$ satisfies $g(t)Qg^\ast(t)=G(u(t),p(t))\circ\iota^{-1}_{H^{2\alpha}}$, where $G$ is defined via the bilinear form
\begin{equation}
\langle G(u,p)\psi,\phi\rangle_{H^{2\alpha}}\,=\,\int_D \Bigl(a(u(x))\,(1-p(x))+b(u(x))\,p(x)\Bigr)\,\psi(x)\,\phi(x)\,\dif x\,.
\end{equation}
\end{subequations}
\end{theorem}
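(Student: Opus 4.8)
The plan is to derive Theorem~\ref{globalCLTcompartmental} from the abstract Theorem~\ref{fluctuation_limit_theorem} and Corollary~\ref{corollary_spde_representation} by exhibiting the compartmental model, with the rescaling $\alpha_n=\ell_-(n)/\nu_+(n)$, as an instance of the abstract framework and checking Assumptions~\ref{assumptions_LLN} and \ref{assumptions_CLT}. The dictionary is $H=E=L^2$, the evolution triplet $X=H_1\subset L^2\subset H_{-1}$, and the regularising spaces $\cX=H_1$, $V=H^\alpha$, $\cE=H^{2\alpha}$ with $\alpha\in(1/2,1]$, so that the fluctuation limit lives in $\cX^\ast\times\cE^\ast=H_{-1}\times H^{-2\alpha}$; the operators are $A=\Delta$ (independent of $p$), $B(u,p)=p\,(\nbar v-u)$, $F(u,p)=a(u)(1-p)-b(u)p$, and $z^n$ is the piecewise constant function \eqref{def_neuron_coordinate}. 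Well-posedness of \eqref{excitable_membrane_system} and Assumptions~\ref{assumptions_LLN} for this model were established in \cite{RTW,RiedlerPhD}, so the task is to check Assumptions~\ref{assumptions_CLT}.

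First I would dispose of the structural conditions. Tightness \assref{C}{ass_C_tightness} follows from Remark~\ref{embedding_remark}, because in dimension one Maurin's theorem makes $\cX=H_1\hookrightarrow L^2$ and $\cE=H^{2\alpha}\hookrightarrow V=H^\alpha$ Hilbert--Schmidt. For \assref{C}{ass_C_differentiability} one computes the partial Gateaux derivatives: $A_p=0$ while $\Delta$ admits the extension $\ntilde\Delta_{-\alpha}$ of Remark~\ref{Hilbert_scales}, and $B_u,B_p,F_u,F_p$ are multiplications by $-p$, $\nbar v-u$, $a'(u)(1-p)-b'(u)p$, $-(a(u)+b(u))$ respectively, all of them multiplications by $H^2$--functions (since $u,p\in C(\rr_+,H^2)$ and $a,b$ are smooth) and hence boundedly and Lipschitz-continuously extendable to $H_{-1}$ and $H^{-2\alpha}$. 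Condition \assref{C}{ass_C_uniform_bound} is immediate for the $z^n(\Theta^n)$--component, which is pointwise in $[0,1]$, and for $U^n$ follows from the parabolic energy estimate for $\dot U^n=\Delta U^n+z^n(\Theta^n)(\nbar v-U^n)$. Since the jump of $z^n$ at a transition in the $k$-th compartment is $\pm l(k,n)^{-1}\mathbb{I}_{D^{k,n}}$, one has $\sqrt{\alpha_n}\,\|z^n(\Theta^n_t)-z^n(\Theta^n_{t-})\|_{\cE^\ast}\lesssim\sqrt{\alpha_n}\,\ell_-(n)^{-1}\nu_+(n)=\sqrt{\nu_+(n)/\ell_-(n)}\to 0$, which yields \eqref{clt_martingale_cond_1} and, the jump heights eventually falling below any $c>0$, also \eqref{ass_B_large_fluctuations_1}; \assref{C}{ass_C_IC_bounds} is the assumed convergence of the rescaled initial conditions.

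The heart of the proof is the quantitative part, where the three scaling hypotheses enter. From \eqref{def_neuron_coordinate}, the model's transition rates and \eqref{definition_quad_var_operator},
\begin{equation*}
\alpha_n\,\langle G^n(U^n_s,\Theta^n_s)\Phi,\Phi\rangle_\cE=\frac{\ell_-(n)}{\nu_+(n)}\sum_{k=1}^{p(n)}\frac{a(\nbar u^{k,n})(l(k,n)-\theta^{k,n})+b(\nbar u^{k,n})\theta^{k,n}}{l(k,n)^2}\,(\mathbb{I}_{D^{k,n}},\Phi)_{L^2}^2\,,
\end{equation*}
and I would pass to the limit in \eqref{ass_C_covariance_convergence} by using Theorem~\ref{theorem_lln} to replace $\theta^{k,n}/l(k,n)$ by $p$ and $\nbar u^{k,n}$ by $u$ on $D^{k,n}$, the equidistribution hypothesis $\ell_-(n)\nu_-(n)/(\ell_+(n)\nu_+(n))\to1$ — which forces $\ell_-(n)|D^{k,n}|/(\nu_+(n)l(k,n))\to1$ uniformly in $k$ — to absorb the prefactor, and $\delta_+(n)\to0$ together with $(\mathbb{I}_{D^{k,n}},\Phi)_{L^2}^2\approx|D^{k,n}|^2\Phi(x_k)^2$ to recognise a Riemann sum converging to $\int_D(a(u)(1-p)+b(u)p)\Phi^2\,\dif x=\langle G(u,p)\Phi,\Phi\rangle_\cE$. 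The generator term $[\mathcal{A}^n\langle z^n(\cdot),\Phi\rangle_\cE](U^n_s,\Theta^n_s)=\sum_k[a(\nbar u^{k,n})(1-\theta^{k,n}/l(k,n))-b(\nbar u^{k,n})\theta^{k,n}/l(k,n)]\,(\mathbb{I}_{D^{k,n}},\Phi)_{L^2}$ differs from $\langle F(U^n_s,z^n(\Theta^n_s)),\Phi\rangle_\cE$ only through replacing $U^n_s$ by its compartmental average; smoothness of $a,b$, the Poincaré inequality on each (convex) $D^{k,n}$ and $\cE\hookrightarrow H^1$ bound this difference by $C\,\delta_+(n)^2(1+\|U^n_s\|_{H^1}^2)\|\Phi\|_{H^1}$, so multiplying by $\sqrt{\alpha_n}$ and using $\nu_+(n)\asymp\delta_+(n)$ (intervals) and $\ell_-(n)\delta_+(n)\to0$ gives \eqref{ass_C_generator_convergence}, and \emph{a fortiori} \eqref{ass_C_generator_diff_bound_2}. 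The bound \eqref{ass_C_generator_diff_bound_1} uses $\|\mathbb{I}_{D^{k,n}}\|_{H^{-\alpha}}^2\lesssim|D^{k,n}|^2$ — valid precisely because $\alpha>1/2$ — so that $\alpha_n\sum_k\ell_-(n)^{-1}|D^{k,n}|^2\lesssim\alpha_n\nu_+(n)\ell_-(n)^{-1}|D|=|D|$ is bounded, the factor $\alpha_n$ cancelling exactly. Finally \assref{C}{ass_C_lipschitz_conds} combines the energy estimate $\gamma_1\|u\|_{H_1}^2\le-\langle\Delta u,u\rangle_{H_1}+\gamma_2\|u\|_{L^2}^2$ with the fact that, $H^\alpha$ being a Banach algebra in one dimension for $\alpha>1/2$, multiplication by the continuous functions $p$ and $\nbar v-u$ is bounded on $H^{-\alpha}$ and $H^{-2\alpha}$.

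With Assumptions~\ref{assumptions_CLT} verified, Theorem~\ref{fluctuation_limit_theorem} yields convergence in distribution of $Z^n=\sqrt{\alpha_n}(U^n-u,z^n(\Theta^n)-p)$ to a continuous Gaussian process on $H_{-1}\times H^{-2\alpha}$ (the initial conditions converging to $\mathcal{N}_0$), and Corollary~\ref{corollary_spde_representation} identifies it as the mild solution of \eqref{general_spde}; inserting the extended partial derivatives found above and the covariance $G$ determined by the Riemann-sum limit produces exactly the system \eqref{compartmental_model_SPDE}. \textbf{The main obstacle} is the verification of \eqref{ass_C_covariance_convergence}: one must propagate the law of large numbers through a \emph{non-uniform} channel repartition, and it is precisely to control the ratios $\ell_-(n)|D^{k,n}|/(\nu_+(n)l(k,n))$ uniformly that the hypothesis $\ell_-(n)\nu_-(n)/(\ell_+(n)\nu_+(n))\to1$ is imposed; the competing roles of $\ell_-(n)\to\infty$, $\delta_+(n)\to0$ and $\ell_-(n)\delta_+(n)\to0$ in the estimates behind \eqref{ass_C_generator_diff_bound_1}, \eqref{ass_C_generator_convergence} and \eqref{ass_B_large_fluctuations_1} are the other delicate point.
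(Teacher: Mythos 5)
Your proposal follows essentially the same route as the paper's proof: it verifies Assumptions \ref{assumptions_CLT} for the choices $H=E=L^2$, $X=\cX=H_1$, $V=H^\alpha$, $\cE=H^{2\alpha}$ with $\alpha_n=\ell_-(n)/\nu_+(n)$, and then invokes Theorem \ref{fluctuation_limit_theorem} and Corollary \ref{corollary_spde_representation}. The only difference is one of bookkeeping: where the paper cites the asymptotics \eqref{comp_model_asymp_behav_1}--\eqref{comp_model_asymp_behav_2} and conditions \eqref{ass_C_covariance_convergence}, \assref{C}{ass_B_large_fluctuations} from \cite{RTW}, you sketch the underlying Riemann-sum and compartmental-average estimates directly, and your scaling computations (e.g.\ $\alpha_n\,\ell_-(n)^{-1}\nu_+(n)^2\lesssim\nu_+(n)$ for \eqref{ass_C_generator_diff_bound_1} and $\alpha_n\delta_+(n)^2\asymp\ell_-(n)\delta_+(n)\to0$ for \eqref{ass_C_generator_diff_bound_2}) are consistent with those cited results.
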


\noindent Note that the assumption that $\lim_{n\to\infty} \frac{\ell_-(n)\nu_-(n)}{\ell_+(n)\nu_+(n)}=1$ in the above theorem allows for heterogeneity in the spatial repartition of channels in the membrane. 

\bigskip

\begin{proof} We have to establish conditions \assref{C}{ass_C_lipschitz_conds} -- \assref{C}{ass_C_differentiability} for the choice of Hilbert spaces $H=E=L^2$, $X=H_1$, $\cE=H^{2\alpha}$ and $V=H^\alpha$. Then the convergence will follow from Theorem \ref{fluctuation_limit_theorem} and Corollary \ref{corollary_spde_representation}. 

Some of these conditions are easier to check than others. Note that \assref{C}{ass_C_IC_bounds} is satisfied by assumption. Moreover as the embeddings $H_1\hookrightarrow L^2$ and $H^{2\alpha}\hookrightarrow H^{\alpha}$ are of Hilbert-Schmidt type, condition  \assref{C}{ass_C_tightness} follows from \assref{C}{ass_C_lipschitz_conds} -- \assref{C}{ass_C_generator_diff_bound} as discussed in Section \ref{discussion_of_assumptions}. Conditions \eqref{ass_C_covariance_convergence} and \assref{C}{ass_B_large_fluctuations} were established in \cite{RTW} to which we refer the reader. The PDMPs as well as the deterministic limit are pointwise bounded over $D$ independently of $n$ which ensures condition \assref{C}{ass_C_uniform_bound} holds. 

\noindent Hence, we are left with \assref{C}{ass_C_lipschitz_conds}, \assref{C}{ass_C_generator_diff_bound}, \eqref{ass_C_generator_convergence} and \assref{C}{ass_C_differentiability} which we now establish.\medskip

\quad(a)\ Lipschitz conditions in \assref{C}{ass_C_lipschitz_conds}: From the coercivity of the Laplace operator we deduce
\begin{equation*}
\langle \Delta (U^n-u),U^n-u\rangle_{H^1}\,\leq\,-\gamma_1\|U^n-u\|^2_{H^1}+\gamma_2\|U^n-u\|_{L^2}^2
\end{equation*}
for some constants $\gamma_1,\gamma_2>0$. Next, we obtain for $1/2<\alpha\leq 1$ that
\begin{eqnarray*}
\langle z^n(\Theta^n)\,(\nbar v-U^n)-p\,(\nbar v-u),U^n-u\rangle_{H^1}&=&-\bigl(z^n(\Theta^n),(U^n-u)^2\bigr)_{L^2}+\langle z^n(\Theta^n)-p,(\nbar v-u)(U^n-u) \rangle_{H^1}\\[1ex]
&\leq& \|z^n(\Theta^n)-p\|_{H^{-\alpha}}\,\|\nbar v-u\|_{H^\alpha}\,\|U^n-u\|_{H^\alpha}\\[1ex]
&\leq& \frac{1}{4\eps}\,\|z^n(\Theta^n)-p\|_{H^{-\alpha}}^2+\eps\, C\,\|\nbar v-u\|_{H^1}^2\,\|U^n-u\|_{H^1}^2,
\end{eqnarray*}
where $C$ is a constant resulting from the continuous embedding $H^1\hookrightarrow H^\alpha$. Note here, that for these estimates to hold we have to choose $\alpha\leq 1$. This restriction combined with the further condition $\alpha>d/2$ where $d$ is the dimension of the domain $D$, impose that we take $d=1$. Integrating the above two estimates over $[0,T]$ we find that if
\begin{equation*}
S\,:=\,-\gamma_1\int_0^T\|U^n-u\|_{H^1}^2\,\dif t + \eps\, C\int_0^T \|\nbar v-u\|_{H^1}^2\,\|U^n-u\|_{H^1}^2\,\dif t\,\leq 0
\end{equation*}
then the first Lipschitz condition in \assref{C}{ass_C_lipschitz_conds} is satisfied. Standard estimates in the theory of linear partial differential equations, see \cite{Evans}, yield the bound
\begin{equation*}
\|u\|_{L^\infty((0,T),H^1)}\ \leq\ C\bigl(\|u_0\|_{H^1}^2+\|p\cdot(\nbar v-u)\|_{L^2((0,T),L^2)}\bigr)\ \leq\ C\bigl(\|u_0\|_{H^1}^2+T\,\nbar v\bigr)\,.
\end{equation*}
Hence $\|\nbar v-u\|_{L^\infty((0,T),H^1)}$ is finite and we estimate $S$ by
\begin {equation*}
 S\, \leq\, -\gamma_1\int_0^T\|U^n-u\|_{H^1}^2\,\dif t + \eps\, C\,\|\nbar v-u\|_{L^\infty((0,T),H^1)}^2\,\int_0^T\|U^n-u\|_{H^1}^2\,\dif t\,
\end {equation*}
Now, choosing $\eps$ sufficiently small we find that $S\leq0$ and thus there exists a constant $L_3$ such that
\begin{eqnarray*}
\lefteqn{\int_0^T\langle \Delta (U^n-u)+z^n(\Theta^n)\,(\nbar v-U^n)-p\cdot(\nbar v-u),U^n-u\rangle_{H^1}\, \dif t}\\
&&\phantom{xxxxxxxxxxxxxxxxxxxxxxx}\leq\ L_3\int_0^T \|U^n-u\|_{L^2}^2 + \|z^n(\Theta^n)-p\|_{H^{-\alpha}}^2\,\dif t\,. 
\end{eqnarray*}
Hence the first Lipschitz condition in \assref{C}{ass_C_lipschitz_conds} is satisfied. In order to derive the second one  we use that $\|z^n(\Theta^n)\|_{L^\infty}\leq 1$ and obtain
\begin{eqnarray*}
\lefteqn{\|F(u,p)-F(U^n,z^n)\|_{H^{-\alpha}}}\\[1ex]
&\leq&  \|(a(u)+b(u))\, p-(a(U^n)+b(U^n))\, z^n(\theta^n)\|_{H^{-\alpha}}+\|a(u)-a(U^n)\|_{H^{-\alpha}}\\[1ex]
&\leq&  \|(a(u)+b(u))\, (p-z^n(\Theta^n))\|_{H^{-\alpha}}+ C\,\|(a(u)-a(U^n)+b(u)-b(U^n))\, z^n(\Theta^n)\|_{L^2}+C\,\|a(u)-a(U^n)\|_{L^2}\\[1ex]
&\leq&  \|(a(u)+b(u))\cdot (p-z^n(\Theta^n))\|_{H^{-\alpha}}+ 3CL_{ab}\,\|u-U^n\|_{L^2},
\end{eqnarray*}
where $C$ is a constant resulting from the continuous embedding of $L^2$ into $H^{-\alpha}$ and $L_{ab}$ is a common Lipschitz constant for $a$ and $b$. Finally, since
\begin{eqnarray*}
 \|(a(u)+b(u))\,(p-z^n(\Theta^n))\|_{H^{-\alpha}}&=&\sup_{\|v\|_{H^\alpha}=1}\big|\bigl(v, (a(u)+b(u))\, (p-z^n)\bigr)_{L^2}\big|\\[1ex]
&\leq& \sup_{\|v\|_{H^\alpha}=1}\Bigl(\|v\, (a(u)+b(u))\|_{H^\alpha}\,\|p-z^n\|_{H^{-\alpha}}\Bigr)\\[1ex]
&\leq& C\,\|a(u)+b(u)\|_{H^1}\, \|p-z^n(\Theta^n)\|_{H^{-\alpha}},
\end{eqnarray*}
where $C$ is a constant resulting from the continuous embedding $H^2\hookrightarrow H^\alpha$, we find the second Lipschitz condition in \assref{C}{ass_C_lipschitz_conds} is satisfied since $\|a(u)+b(u)\|_{L^\infty((0,T),H^1)}<\infty$. We establish the third Lipschitz condition in \assref{C}{ass_C_lipschitz_conds} by the same estimates with $\alpha$ replaced by $2\alpha$.

\medskip

\quad(b)\ We now establish \assref{C}{ass_C_generator_diff_bound} and \eqref{ass_C_generator_convergence} using that (see \cite{RTW}) for $\alpha>1/2$
\begin{equation}\label{comp_model_asymp_behav_1}
\EX^n\int_0^t\Bigl[\Lambda^n(U^n_s,\Theta^n_s)\int_{K_n}\|z^n(\xi)-z^n(\Theta^n_s)\|_{H^{-\alpha}}^2\,\mu^n\bigl((U^n_s,\Theta^n_s),\dif\xi\bigr)\,\dif s\Bigr]\,=\,\landau\bigl(\nu_+(n)/\ell_-(n)\bigr)\,,
\end{equation}
and 
\begin{equation}\label{comp_model_asymp_behav_2}
\int_0^T\EX^n\big\|\bigl[\mathcal{A}^n\langle z^n_j(\cdot),\,\cdot\,\rangle_{L^2}\bigr](U^n,\Theta^n) -  F(U^n,z^n(\Theta^n))\big\|_{L^2}^2\,\dif s = \landau(\delta^2_+(n))\,.
\end{equation}
The asymptotic behaviour \eqref{comp_model_asymp_behav_1} implies \eqref{ass_C_generator_diff_bound_1}. The asymptotic behaviour \eqref{comp_model_asymp_behav_2} combined with the continuous embedding $L^2\hookrightarrow H^{-\alpha}$ and the assumption $\ell_-(n)\delta_+(n)\to 0$ imply \eqref{ass_C_generator_diff_bound_2}. The same reasoning implies condition \eqref{ass_C_generator_convergence}.

\medskip

\quad(c)\ It remains the consider the differentiability conditions \assref{C}{ass_C_differentiability}. Remember that in this section the operators $A,\,B$ and $F$ are given by 
\begin{equation*}
A(p)=\Delta,\quad B(u,p)=p\,(\nbar v-u),\quad F(u,p)=-(a(u)+b(u))\,p + a(u)\,.
\end{equation*} 
We have already stated that Laplacian can be extended to an operator $\ntilde\Delta_{-\alpha}\in L(H_{-\alpha},H_{-\alpha})$ (cf.~Remark \ref{Hilbert_scales}). As it is independent of $p$, $A=\ntilde\Delta_{-\alpha}$ trivially satisfies the differentiability condition \assref{C}{ass_C_differentiability}. It is easy to see that the operator $B$ is partially differentiable with derivatives $B_u[u,p]\in L(H_1,H_{-1})$ and $B_p[u,p]\in L(L^2,H_{-1})$:
\begin{equation*}
h\mapsto B_u[u,p]h= -ph,\qquad h\mapsto B_p[u,p]h= (\nbar v-u)h\,.
\end{equation*}
As $p$ and $\nbar v-u$ evaluated along the deterministic solution are elements of $H^1$ at each time $t$, these operators immediately extend to operators $\ntilde B_u[u,p]\in L(H_{-1},H_{-1})$ and $\ntilde B_p[u,p]\in L(H^{-2\alpha},H_{-1})$ being densely defined and bounded. For instance, the derivative $\ntilde B_u[u,p]\phi=-p\phi\in H_{-1}$ for $\phi\in H_{-1}$ is defined as
\begin{equation*}
\langle-p\phi,v\rangle_{H_1}:= \langle\phi,-p v\rangle_{H_1}\qquad\forall\,v\in H_1\,.
\end{equation*}
This is well-defined as the pointwise product $pv$ is an element of $H_1$ due to the Banach algebra property of this space on one-dimensional domains. 
Like the operator $B$, the operator $F$ is linear in $p$ and the derivative $F_p[u,p]\in L(L^2,L^2)$ is given by $h\mapsto F_p[u,p] h=-(a(u)+b(u))\, h$ which extends in the same way as $B_p$ above to an operator $\ntilde F_p[u,p]\in L(H^{-2\alpha},H^{-2\alpha})$. Dependence on $u$ is nonlinear, thus the derivative $F_u[u,p]\in L(H^1,L^2)$ is
\begin{equation*}
h\mapsto F_u[u,p]h = -(a'(u)+b'(u))\,p\,h + a'(u)\,h
\end{equation*}
which completely is extended to an operator $\ntilde F_u[u,p]$ in $L(H_{-1},H^{-2\alpha})$ analogously.
\end{proof}

\subsection{Neural field models}

As in the previous compartmental model, one chooses a sequence of partitions $\pindex^n$ of a spatial domain $D\subset\rr^d$. The sequence of PDMPs is given by a sequence of jump processes connected to this sequence of partitions. For this model there is no restriction on the dimension of the domain. The main difference currently is that here we only consider jumping components $\Theta^n$ and no continuous component $U^n$. The deterministic limit is here given by the solution to the Wilson-Cowan model which describes the mean level of activity of populations of neurons and reads as follows:
\begin{equation}\label{wilson_cowan_equation}
\dot p(t,x) = -p(t,x)+f\Bigl(\int_D w(x,y)\,p(t,y)\,\dif y\Bigr)\,,
\end{equation}
The gain function $f$ and the connectivity kernel $w$ are assumed to be smooth and bounded. As previously mentioned only finite number of neurons are accessible so it is natural to consider stochastic models in which the piecewise constant components $\theta^n=(\theta^{1,n},\ldots,\theta^{p(n),n})\in K_n\subseteq\mathbb{N}^{p(n)}$ count the number of active neurons in the individual subdomains. Here $l(k,n)$ is a parameter which is related to the number of neurons in the subdomains in the sense that $l(k,n)\geq \EX^n\Theta^{k,n}_0$. The transition dynamics of the components $\theta^n$ is given by
\begin{equation*}
\theta^n\to\nhat\theta^n \tn{ with rate } 
\left\{\begin{array}{cl}
\theta^{k,n}& \tn{if } \nhat\theta^{k,n}=\theta^{k,n}-1,\, \nhat\theta^{j,n}=\theta^{j,n},\, j\neq k,\\[2.5ex]
l(k,n)\,f\displaystyle\Bigl(\sum_{j=1}^{p(n)} \nbar W^n_{kj} \frac{\theta^{j,n}}{l(j,n)}\Bigr)& \tn{if } \nhat\theta^{k,n}=\theta^{k,n}+1,\, \nhat\theta^{j,n}=\theta^{j,n},\, j\neq k,\\[4.5ex]
0 & \tn{otherwise}\,,
\end{array}\right.
\end{equation*}
where
\begin{equation*}
\nbar W^n_{kj} =\frac{1}{|D^{k,n}|}\int_{D^{k,n}}\Bigl(\int_{D^{j,n}} w(x,y)\,\dif y\Bigr)\,\dif x\,.
\end{equation*}
Finally, as in the previous applications the coordinate functions are given by
\begin{equation}\label{def_field_coordinate}
z^n(\theta^n)=\sum_{k=1}^{p(n)} \frac{\theta^{k,n}}{l(k,n)}\,\mathbb{I}_{D^{k,n}} \in L^2\,.
\end{equation}
The Hilbert spaces are chosen as before, i.e., $E=L^2$, $V=H^\alpha$ and $\cE=H^{2\alpha}$ with $\alpha>d/2$. If the initial conditions converge in probability and $\ell(n)_-\to\infty$, $\delta(n)_+\to 0$, the sequence of stochastic models $z^n(\Theta^n)$ converges u.c.p.~to the solution $p$ of the the Wilson-Cowan equation \eqref{wilson_cowan_equation} in $L^2$. This was proved in \cite{BR_neural_fields} building on \cite{RTW}. We now present the central limit theorem for the global fluctuations.

\begin{theorem} Let $\alpha> d/2$ and assume that the rescaled difference in the macroscopic initial conditions converges to a Gaussian distribution $\mathcal{N}_0$ and that $\ell_-(n)\to\infty$, $\delta_+(n)\to 0$ with $\ell_-(n)\delta_+(n)\to 0$ and $\lim_{n\to\infty} \frac{\ell_-(n)\nu_-(n)}{\ell_+(n)\nu_+(n)}=1$. Then the sequence
\begin{equation*}
\sqrt{\tfrac{\ell_-(n)}{\nu_+(n)}}\,(z^n(\Theta^n)-p)
\end{equation*}
converges weakly to an $H^{-2\alpha}$-valued diffusion process which is a version of the mild solution to the SPDE
\begin{subequations}\label{neural_field_spde}
\begin{equation}
\dif Y_t = \ntilde F_p[p(t)] Y_t\,\dif t + g(t)\,\dif W_t,
\end{equation}
where $\ntilde F_p[p(t)]\in L(H^{-2\alpha},H^{-2\alpha})$ is defined by
\begin{equation}
\langle\ntilde F_p[p]\,h,v\rangle_{H^{2\alpha}} \,=\, -\langle h,v\rangle_{H^{2\alpha}}+\int_D\Bigl(f'\bigl((w(x,\cdot),p)_{L^2}\bigr)\,\langle h,w(x,\cdot)\rangle_{H^{2\alpha}}\Bigr)\, v(x)\,\dif x\qquad\forall\, h,v\in H^{-2\alpha}
\end{equation}
and initial conditions $P_0$ are distributed according to $\mathcal{N}_0$ on $H^{-2\alpha}$. The process $W^Q$ is a cylindrical Wiener process on a Hilbert space $H'$ with covariance operator $Q\in L(H')$ such that $g(t)\in L(H',H^{-2\alpha})$ satisfies $g(t)Q g^\ast(t)=G(p(t))\circ\iota^{-1}_{H^{2\alpha}}$, with $G(p)\in L_1(H^{2\alpha},H^{-2\alpha})$ defined by
\begin{equation}
\langle G(p)h,v\rangle_{H^{2\alpha}} \,=\, \int_D \Bigl(p(x)+f\Bigl(\int_Dw(x,y)\,p(y)\,\dif y\Bigr)\Bigr)\,h(x)\,v(x)\,\dif x\qquad\forall\, h,v\in H^{2\alpha}
\end{equation}
\end{subequations}
\end{theorem}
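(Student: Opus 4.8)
The plan is to deduce the statement from Theorem \ref{fluctuation_limit_theorem} and Corollary \ref{corollary_spde_representation}, following the same scheme as in the proof of Theorem \ref{globalCLTcompartmental}, after specialising the abstract framework: here there is no continuous component, so $A=B=0$ and the factor $\cX$ is absent; the spaces are $H=E=L^2$, $V=H^\alpha$, $\cE=H^{2\alpha}$ with $\alpha>d/2$; the scaling sequence is $\alpha_n=\ell_-(n)/\nu_+(n)$; and the nonlinearity is $F(p)=-p+f(w\ast p)$, where $(w\ast p)(x)=(w(x,\cdot),p)_{L^2}=\langle p,w(x,\cdot)\rangle_{H^{2\alpha}}$ is well defined on the relevant negative Sobolev spaces because $w$ is smooth and bounded. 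With these identifications the whole proof reduces to verifying Assumptions \ref{assumptions_CLT}; once this is done, Theorem \ref{fluctuation_limit_theorem} yields weak convergence to an $H^{-2\alpha}$-valued Gaussian diffusion and Corollary \ref{corollary_spde_representation} identifies it as the mild solution of \eqref{neural_field_spde} with drift $\ntilde F_p[p(t)]$ and covariance $G(p(t))$.

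Several conditions are immediate or already available. Condition \assref{C}{ass_C_IC_bounds} holds because the rescaled initial differences converge to a Gaussian law and hence have bounded second moments; \assref{C}{ass_C_uniform_bound} follows from the moment estimates for the neuron-counting PDMPs and the Wilson--Cowan solution established in \cite{BR_neural_fields}; the conditions \eqref{ass_C_generator_diff_bound_1}, \eqref{ass_C_generator_diff_bound_2}, \eqref{ass_C_covariance_convergence} and \eqref{ass_C_generator_convergence} follow from the asymptotic estimates for the jump moments and the generator discrepancy obtained in \cite{RTW,BR_neural_fields} (the analogues of \eqref{comp_model_asymp_behav_1}--\eqref{comp_model_asymp_behav_2}) together with $\ell_-(n)\to\infty$, $\delta_+(n)\to 0$, $\ell_-(n)\delta_+(n)\to 0$ and $\ell_-(n)\nu_-(n)/(\ell_+(n)\nu_+(n))\to 1$ — the choice $\alpha_n=\ell_-(n)/\nu_+(n)$ is exactly what makes the rate $\landau(\nu_+(n)/\ell_-(n))$ bounded after multiplication by $\alpha_n$. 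For \assref{C}{ass_B_large_fluctuations} the jumps of $z^n(\Theta^n)$ are $\pm l(k,n)^{-1}\mathbb{I}_{D^{k,n}}$, so $\sqrt{\alpha_n}\,\|z^n(\Theta^n_t)-z^n(\Theta^n_{t-})\|_{H^{-2\alpha}}\leq C\sqrt{\alpha_n}\,\ell_-(n)^{-1}\sqrt{\nu_+(n)}=C\,\ell_-(n)^{-1/2}\to 0$, which gives \eqref{clt_martingale_cond_1} and a fortiori the Lindeberg-type condition \eqref{ass_B_large_fluctuations_1}. Finally, since $2\alpha-\alpha=\alpha>d/2$, Maurin's theorem makes $\cE=H^{2\alpha}\hookrightarrow H^\alpha=V$ Hilbert--Schmidt, so \assref{C}{ass_C_tightness} follows from \assref{C}{ass_C_lipschitz_conds}--\assref{C}{ass_C_generator_diff_bound} as in Remark \ref{embedding_remark}.

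What remains are the Lipschitz bounds \assref{C}{ass_C_lipschitz_conds} and the differentiability condition \assref{C}{ass_C_differentiability}, and the whole point is that convolution with the smooth kernel $w$ compensates for working in negative-order norms. In \assref{C}{ass_C_lipschitz_conds} the bilinear-form inequality is vacuous ($A=B=0$), the contribution $-p$ is an isometry in every norm, and for the remaining term one uses $L^2\hookrightarrow H^{-\alpha}$ together with $\|f(w\ast p)-f(w\ast q)\|_{L^2}^2\leq L_f^2\int_D|\langle p-q,w(x,\cdot)\rangle_{H^\alpha}|^2\,\dif x\leq L_f^2\,\|p-q\|_{H^{-\alpha}}^2\,|D|\,\sup_{x\in D}\|w(x,\cdot)\|_{H^\alpha}^2$, which is finite because $w$ is smooth; the same computation with $\alpha$ replaced by $2\alpha$ gives the $\cE^\ast$-inequality. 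In \assref{C}{ass_C_differentiability} one computes the Gateaux derivative $F_p[p]h=-h+f'(w\ast p)\,(w\ast h)$, extends it to the bounded operator $\ntilde F_p[p]\in L(H^{-2\alpha},H^{-2\alpha})$ written in the statement (again via $L^2\hookrightarrow H^{-2\alpha}$ and $\sup_x\|w(x,\cdot)\|_{H^{2\alpha}}<\infty$), and checks the Lipschitz dependence on $p$ along the bounded, smooth trajectory $p(\cdot)$ from boundedness and smoothness of $f'$. The covariance $G(p)$ is then read off from \eqref{definition_quad_var_operator}: combining the jump rates $\theta^{k,n}$ and $l(k,n)f(\sum_j\nbar W^n_{kj}\theta^{j,n}/l(j,n))$ with squared jump heights $l(k,n)^{-2}\langle\mathbb{I}_{D^{k,n}},\Phi\rangle^2$, rescaling by $\alpha_n$ and passing to the limit via Theorem \ref{theorem_lln} yields exactly $\int_D\bigl(p(x)+f(\int_D w(x,y)p(y)\,\dif y)\bigr)\Phi(x)^2\,\dif x$.

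I expect the main obstacle to be the two estimates of the last paragraph carried out carefully in the correct negative-order spaces: showing that the Nemytskii-type map $p\mapsto f(w\ast p)$ and its derivative are bounded and Lipschitz as maps $H^{-\alpha}\to H^{-\alpha}$ and $H^{-2\alpha}\to H^{-2\alpha}$, and that the extension $\ntilde F_p$ appearing in the SPDE genuinely agrees with the Gateaux derivative on the dense subspace $L^2$ — routine given the smoothing by $w$, but requiring attention to the duality pairings. A secondary technical point is \assref{C}{ass_C_uniform_bound}: unlike the compartmental model, the neuron counts $\theta^{k,n}$ are not deterministically bounded, so uniform $p$-th moments ($p>2$) must be obtained from the moment analysis of the neural field PDMPs in \cite{BR_neural_fields}.
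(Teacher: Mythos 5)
Your proposal is correct and follows essentially the same route as the paper: reduce everything to verifying Assumptions C for the choices $H=E=L^2$, $V=H^\alpha$, $\mathcal{E}=H^{2\alpha}$, $\alpha_n=\ell_-(n)/\nu_+(n)$ with $A=B=0$, cite the jump-moment and generator-discrepancy asymptotics from the earlier neural-field work for (C2)--(C6) and (C8), check the Lipschitz property of $F(p)=-p+f(w\ast p)$ in the negative-order norms via the smoothing kernel $w$, compute and extend the Gateaux derivative $F_p[p]h=-h+f'(w\ast p)(w\ast h)$, and conclude by Theorem \ref{fluctuation_limit_theorem} and Corollary \ref{corollary_spde_representation}. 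You in fact supply several explicit estimates (the jump-height bound $\ell_-(n)^{-1/2}$ for (C6), the kernel-based Lipschitz bound, the identification of $G$) that the paper delegates to its citations, but the argument is the same.
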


\begin{proof} As for the compartmental models we only need to establish \assref{C}{ass_C_lipschitz_conds}, \assref{C}{ass_C_generator_diff_bound}, \eqref{ass_C_generator_convergence} and \assref{C}{ass_C_differentiability}. For the other conditions see \cite{BR_neural_fields}. 
We start with the Lipschitz conditions in \assref{C}{ass_C_lipschitz_conds}. It is shown in \cite{BR_neural_fields} that the function $F(p)$ given by the right hand side of \eqref{wilson_cowan_equation} satisfies a Lipschitz condition in every space $H^{-\alpha}$, $\alpha>0$, i.e., there exist constants $L_{-\alpha}$ such that
\begin{equation*}
\|F(g_1,t)-F(g_2,t)\|_{H^{-\alpha}}\leq L_{-\alpha}\|g_1-g_2\|_{H^{-\alpha}}\qquad\forall\,t\geq 0, g_1,g_2\in L^2\,. 
\end{equation*}
Therefore condition \assref{C}{ass_C_lipschitz_conds} holds. The same asymptotic behaviours as \eqref{comp_model_asymp_behav_1} and \eqref{comp_model_asymp_behav_2} for the neural field case are proven in \cite{BR_neural_fields}. Analogous arguments yield again conditions \assref{C}{ass_C_generator_diff_bound} and \eqref{ass_C_generator_convergence} are satisfied. 
We now consider the differentiability of the right hand side $F$ of (\ref{wilson_cowan_equation}) to establish condition \assref{C}{ass_C_differentiability}. It is straightforward to see that $F$ is strongly Fr\'echet-differentiable in $L^2$ for $f\in C^2_b(\rr)$ with derivative $F_p[p] \in L(L^2,L^2)$ given by
\begin{equation*}
h\mapsto F_p[p]\,h:=-h+f'\Bigl(\int_D w(\cdot,y)p(y)\,\dif y\Bigr)\,\int_D w(\cdot,y)h(y)\,\dif y\,. 
\end{equation*}
For sufficiently smooth $y\mapsto w(x,y)$ this map extends to an operator in $L(H^{-2\alpha},H^{-2\alpha})$ via the definition
\begin{equation}
\langle\ntilde F_p[p]\,h,v\rangle_{H^{2\alpha}} \,=\, -\langle h,v\rangle_{H^{2\alpha}}+\int_D\Bigl(f'\bigl((w(x,\cdot),p)_{L^2}\bigr)\,\langle h,w(x,\cdot)\rangle_{H^{2\alpha}}\Bigr)\, v(x)\,\dif x\,.
\end{equation}
Therefore condition \assref{C}{ass_C_differentiability} is satisfied. 
\end{proof}


\begin{appendix}

\section{Appendix}

In this Appendix we have gathered detailed results and remarks mentioned without details in the main text.  

\subsection {A function satisfying the assumptions of Theorem \ref{PDMP_gen_theorem}}\label{appendix_function_example}

We show here that the function $f$ given by (\ref{def_of_a_special_f}) satisfies the assumptions of Theorem \ref{PDMP_gen_theorem} (b). This function is complex which however, does not pose any difficulties as $\mathbb{C}$ can be identified with $\rr^2$ and $f$ considered componentwise in which case it possesses a bounded real and imaginary part as $|f(u,\theta)|_\mathbb{C}=1$ for all $(u,\theta)\in H\times K$. As $f$ is bounded the integrability with respect to the compensating measure is immediate. Moreover, its integral with respect to the associated fundamental martingale measure is a martingale due to \cite[Thm.~4.6.1]{Jacobsen} and thus Dynkin's formula \ref{Dynkin_formula} holds. Furthermore, $f$ is partially continuously Fr\'echet differentiable with respect to the first argument due to the chain rule and the Riesz Representation $f_u$ of the partial derivative is given by
\begin{equation*}
f_u(u,\theta)= i\,\exp\Bigl(i\,\langle \psi,u \rangle_H+i\,\langle\Phi,z(\theta)\rangle_E\Bigr)\,\psi \in H^\ast\,,
\end{equation*}
where $H$ is considered a complex Hilbert space. As $\psi \in X$ it holds that $f_u(u,\theta)\in X$ and finally it is easy to see that $f_u$ is a locally bounded composition operator as
\begin{equation*}
\Big\|i\,\exp\Bigl(i\,\langle \psi,u \rangle_H+i\,\langle\Phi,z(\theta)\rangle_E\Bigr)\psi\Big\|_{L^2((0,T),X)}^2 
\,\leq\, T \|\psi\|_X^2
\end{equation*}
for all $(u,\theta)\in L^2((0,T),X\times K)$. Hence, formula \eqref{infindimGen} holds for $f$ as defined in \eqref{def_of_a_special_f}

\subsection{An auxiliary convergence result}\label{app_trace_convergence}

Let us prove a convergence result for the trace of the quadratic variations of the PDMPs. This result was previously stated without proof in \cite{RTW}. 

\setcounter{ass}{001}
\begin{assumptions}\label{assumptions_trace_conv} 
\begin{enumerate_ass}
\item \label{ass_B_weaker_conditions_1}For all $T>0$,
\begin{eqnarray*}
\sup_{n\in\mathbb{N}}\alpha_n\,\EX^n\int_0^T \trace G^n(U^n_s,\Theta^n_s)\,\dif s < \infty\,,
\end{eqnarray*}
where $G^n(u,\theta^n)$ is the operator from $\cE$ to $\cE^\ast$ defined in \eqref{definition_quad_var_operator}.

\item \label{ass_B_weaker_conditions_2}There exists an orthonormal basis $(\varphi_k)_{k\in\mathbb{N}}$ of $\cE$ such that for all $k\in\mathbb{N}$ and all $T>0$
\begin{equation*}
\alpha_n\,\EX^n\Bigl[\int_0^T\langle G^n(U^n_s,\Theta^n_s)\varphi_k,\varphi_k\rangle_\cE\,\dif s\,\Bigr]\ \leq\  \gamma_k\,C(T).
\end{equation*}
The constants $\gamma_k>0$ are independent of $n,\,T$ and satisfy $\sum_{k\in\mathbb{N}}\gamma_k<\infty$. The constant $C(T)>0$ is independent of $n,\,k$.
\end{enumerate_ass}
\end{assumptions}

Note that assumption \assref{B}{ass_B_weaker_conditions_1} is implied  by \eqref{ass_C_generator_diff_bound_1} and assumption \assref{B}{ass_B_weaker_conditions_2} is in applications usually an intermediate result in establishing tightness of the PDMP sequence. We can then prove the following result.

\begin{proposition}\label{app_trace_con_prop} The \tn{Assumptions \ref{assumptions_trace_conv}} and the convergence \eqref{ass_C_covariance_convergence} imply
\begin{equation*}
\lim_{n\to\infty} \alpha_n\int_0^T\EX^n\trace G^n(U^n_s,\Theta^n_s)\,\dif s \ =\ \int_0^T \trace G(u(s),p(s))\,\dif s\,.
\end{equation*}
\end{proposition}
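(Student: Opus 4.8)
The statement is a dominated-convergence-type interchange of limit and sum: writing $\trace G^n(U^n_s,\Theta^n_s)=\sum_{k\in\mathbb{N}}\langle G^n(U^n_s,\Theta^n_s)\varphi_k,\varphi_k\rangle_\cE$ for the fixed orthonormal basis $(\varphi_k)$ of $\cE$ supplied by Assumption \assref{B}{ass_B_weaker_conditions_2}, I would like to pass the limit $n\to\infty$ through the $k$-sum and through the time integral, using \eqref{ass_C_covariance_convergence} termwise. The uniform summable bound in \assref{B}{ass_B_weaker_conditions_2}, namely $\alpha_n\,\EX^n\int_0^T\langle G^n(U^n_s,\Theta^n_s)\varphi_k,\varphi_k\rangle_\cE\,\dif s\le\gamma_k C(T)$ with $\sum_k\gamma_k<\infty$, is exactly the majorant needed to justify this interchange. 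So the proof is a clean application of the dominated convergence theorem for series (or, equivalently, for the counting measure on $\mathbb{N}$ times Lebesgue measure on $[0,T]$).

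\medskip

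\textbf{The steps.} First I would fix $T>0$ and note that for each $n$ and almost all $s$, $G^n(U^n_s,\Theta^n_s)$ is symmetric, positive and trace class, so $\langle G^n\varphi_k,\varphi_k\rangle_\cE\ge 0$ and $\trace G^n(U^n_s,\Theta^n_s)=\sum_{k}\langle G^n(U^n_s,\Theta^n_s)\varphi_k,\varphi_k\rangle_\cE$, all terms non-negative; the same holds for the limit operator $G(u(s),p(s))$, which is an integrable family of symmetric, positive, trace class operators by \eqref{ass_C_covariance_convergence}. Second, applying Tonelli (everything non-negative) I would write
\[
\alpha_n\int_0^T\EX^n\trace G^n(U^n_s,\Theta^n_s)\,\dif s=\sum_{k\in\mathbb{N}}\alpha_n\int_0^T\EX^n\langle G^n(U^n_s,\Theta^n_s)\varphi_k,\varphi_k\rangle_\cE\,\dif s=:\sum_{k\in\mathbb{N}}a_n(k),
\]
and similarly $\int_0^T\trace G(u(s),p(s))\,\dif s=\sum_k\int_0^T\langle G(u(s),p(s))\varphi_k,\varphi_k\rangle_\cE\,\dif s=:\sum_k a(k)$. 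Third, \eqref{ass_C_covariance_convergence} applied with $\Phi=\varphi_k$ gives $\lim_{n\to\infty}\int_0^T\EX^n|\langle G(u(s),p(s))\varphi_k,\varphi_k\rangle_\cE-\alpha_n\langle G^n(U^n_s,\Theta^n_s)\varphi_k,\varphi_k\rangle_\cE|\,\dif s=0$, hence $a_n(k)\to a(k)$ for every fixed $k$ (the deterministic term passes through $\EX^n$ unchanged, and the difference of integrals is bounded by this vanishing quantity). Fourth, \assref{B}{ass_B_weaker_conditions_2} gives $0\le a_n(k)\le\gamma_k C(T)$ for all $n$, with $\sum_k\gamma_k C(T)<\infty$. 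Fifth, the dominated convergence theorem for the counting measure on $\mathbb{N}$ yields $\sum_k a_n(k)\to\sum_k a(k)$, which is the claim. Finiteness of $\EX^n\int_0^T\trace G^n\,\dif s$ for the Tonelli step is guaranteed by \assref{B}{ass_B_weaker_conditions_1}, and finiteness of the limit follows from $\sum_k a(k)\le\sum_k\gamma_k C(T)<\infty$.

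\medskip

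\textbf{The main obstacle.} There is no deep obstacle; the work is almost entirely bookkeeping. The one point requiring a little care is the passage from the termwise convergence supplied by \eqref{ass_C_covariance_convergence} (which controls $\int_0^T\EX^n|\cdots|\,\dif s$) to the convergence $a_n(k)\to a(k)$ of the unsigned quantities: one must observe that $|a_n(k)-a(k)|=|\int_0^T(\EX^n\alpha_n\langle G^n\varphi_k,\varphi_k\rangle_\cE-\langle G\varphi_k,\varphi_k\rangle_\cE)\,\dif s|\le\int_0^T\EX^n|\alpha_n\langle G^n\varphi_k,\varphi_k\rangle_\cE-\langle G\varphi_k,\varphi_k\rangle_\cE|\,\dif s\to 0$, using that $\langle G(u(s),p(s))\varphi_k,\varphi_k\rangle_\cE$ is deterministic so $\EX^n$ of it equals it. A secondary subtlety, worth a sentence, is that \assref{B}{ass_B_weaker_conditions_2} posits \emph{some} orthonormal basis of $\cE$ for which the bound holds, and \eqref{ass_C_covariance_convergence} holds for \emph{all} $\Phi\in\cE$; since the trace is basis-independent, we simply evaluate both traces in that particular basis and everything is consistent.
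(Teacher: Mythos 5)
Your proposal is correct and follows essentially the same route as the paper: termwise convergence of $\alpha_n\int_0^T\EX^n\langle G^n\varphi_k,\varphi_k\rangle_\cE\,\dif s$ from \eqref{ass_C_covariance_convergence} via the triangle/Jensen inequality, plus the summable uniform majorant $\gamma_kC(T)$ from \assref{B}{ass_B_weaker_conditions_2} to interchange $\lim_n$ with $\sum_k$. The paper phrases this last step as uniform convergence of the partial sums in $n$ followed by an exchange of iterated limits, which is the same argument you invoke under the name of dominated convergence for the counting measure.
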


\begin{proof} %
Let $(\Phi_k)_{k\in\mathbb{N}}$ be an orthonormal basis of $\mathcal{E}$ then it holds by Jensen's inequality that
\begin{eqnarray*}
\lefteqn{\int_0^T\EX^n\big|\bigl\langle G(u(s),p(s))\,\Phi_k,\Phi_k\bigr\rangle_\cE-\alpha_n\bigl\langle G^n(U^n_s,\Theta^n_s)\,\Phi_k,\Phi_k\bigr\rangle_\cE\big|\,\dif s}\\
&&\phantom{xxxxxxxxxxxxxxx}\geq\ \Big|\int_0^T\bigl\langle G(u(s),p(s))\,\Phi_k,\Phi_k\bigr\rangle_\cE\,\dif s-\alpha_n\int_0^T\EX^n\bigl\langle G^n(U^n_s,\Theta^n_s)\,\Phi_k,\Phi_k\bigr\rangle_\cE\,\dif s\Big|\,.
\end{eqnarray*}
As the left hand side converges to zero due to \eqref{ass_C_covariance_convergence} it follows that for all $\Phi_k,\,k\in\mathbb{N}$,
\begin{equation*}
\lim_{n\to\infty} \alpha_n\int_0^T\EX^n\bigl\langle G^n(U^n_s,\Theta^n_s)\,\Phi_k,\Phi_k\bigr\rangle_\cE\,\dif s \ = \ \int_0^T\bigl\langle G(u(s),p(s))\,\Phi_k,\Phi_k\bigr\rangle_\cE\,\dif s\,.
\end{equation*}
This implies that any finite sum of terms in the right hand side converges to the corresponding finite sum of the limits in the left hand side. Moreover due to dominated convergence,
\begin{eqnarray}
\int_0^T\!\!\EX^n\,\textnormal{Tr}\,G^n(U^n_s,\Theta^n_s)\,\dif s& =& \int_0^T\!\!\EX^n\sum_{k\in\mathbb{N}}\bigl\langle G^n(U^n_s,\Theta^n_s)\,\Phi_k,\Phi_k\bigr\rangle_\cE\,\dif s\nonumber\\ 
&=& \sum_{k\in\mathbb{N}}\int_0^T\!\!\EX^n\bigl\langle G^n(U^n_s,\Theta^n_s)\,\Phi_k,\Phi_k\bigr\rangle_\cE\,\dif s \label{dom_conv_seq_trace}
\end{eqnarray}
and analogously
\begin{equation}\label{dom_conv_limit_trace}
\int_0^T \textnormal{Tr}\,G(u(s),p(s))\,\dif s = \sum_{k\in\mathbb{N}}\int_0^T\bigl\langle G(u(s),p(s))\,\Phi_k,\Phi_k\bigr\rangle_\cE\,\dif s\,.
\end{equation}

We show now that the partial sums for the sequence of PDMP models converge to the trace {\it uniformly} in $n$:
\begin{eqnarray*}
\lefteqn{\forall\,\eps>0\ \exists\, M:\ \forall\,m >M:}\\[3ex]
&\Big|\displaystyle\sum_{k\leq m} \alpha_n\int_0^T\EX^n\bigl\langle G^n(U^n_s,\Theta^n_s)\,\Phi_k,\Phi_k\bigr\rangle_\cE\,\dif s - \alpha_n\int_0^T\EX^n\trace G^n(U^n_s,\Theta^n_s)\,\dif s \Big| < \eps\quad \forall\,n\in\mathbb{N}\,.&
\end{eqnarray*}
Note that \eqref{dom_conv_seq_trace} implies
\begin{eqnarray*}
\lefteqn{\Big|\sum_{k\leq m} \alpha_n\int_0^T\EX^n\bigl\langle G^n(U^n_s,\Theta^n_s)\,\Phi_k,\Phi_k\bigr\rangle_\cE\,\dif s - \alpha_n\int_0^T\EX^n\trace G^n(U^n_s,\Theta^n_s)\,\dif s \Big|}\\
&&\phantom{xxxxxxxxxxxxxxxxxxxxxxxxxxxxxxxxxxx} = \ \sum_{k>m} \alpha_n\int_0^T\EX^n\bigl\langle G^n(U^n_s,\Theta^n_s)\,\Phi_k,\Phi_k\bigr\rangle_\cE\,\dif s\,.
\end{eqnarray*}
Due to condition \assref{B}{ass_B_weaker_conditions_2} there exists a sequence of $\gamma_k>0$ independent of $t,n$ with $\sum_{k\in\mathbb{N}}\gamma_k<\infty$ and a constant $C(T)>0$ independent of $n$ such that
\begin{equation*}
\sum_{k>m} \alpha_n\int_0^T\EX^n\bigl\langle G^n(U^n_s,\Theta^n_s)\,\Phi_k,\Phi_k\bigr\rangle_\cE\,\dif s\ \leq \ C(T)\sum_{k>m}\gamma_k\,.
\end{equation*}
As $\sum_{k\in\mathbb{N}}\gamma_k<\infty$ for every $\eps>0$ there exists an $M$ such that
\begin{equation*}
C(T)\sum_{k>m}\gamma_k\ < \ \eps\quad\forall\, m>M\,,
\end{equation*}
where $M$ does not depend on $n\in\mathbb{N}$. This yields the uniform convergence.\medskip

Uniform convergence is important because it allows to change the order of taking limits. Thus we obtain
\begin{eqnarray*}
\lefteqn{\lim_{n\to\infty}\lim_{m\to\infty}\sum_{k\leq m}\alpha_n\int_0^T\EX^n\bigl\langle G^n(U^n_s,\Theta^n_s)\,\Phi_k,\Phi_k\bigr\rangle_\cE\,\dif s}\\&&\phantom{xxxxxxxxxxxxx}=\ \lim_{m\to\infty}\lim_{n\to\infty}\sum_{k\leq m}\alpha_n\int_0^T\EX^n\bigl\langle G^n(U^n_s,\Theta^n_s)\,\Phi_k,\Phi_k\bigr\rangle_\cE\,\dif s\\
&&\phantom{xxxxxxxxxxxxx}=\ \lim_{m\to\infty} \sum_{k\leq m} \int_0^T\bigl\langle G(u(s),p(s))\,\Phi_k,\Phi_k\bigr\rangle_\cE\,\dif s\\
&&\phantom{xxxxxxxxxxxxx}=\ \int_0^T \textnormal{Tr}\,G(u(s),p(s))\,\dif s\,,
\end{eqnarray*}
where we have used \eqref{dom_conv_limit_trace} for the last equality. The proof is completed.
\end{proof}

\subsection{Bounds for the application of the Dominated Convergence Theorem}\label{app_dom_conv_appl}

We now turn to some detailed estimation procedures that we did not include in the proof of the Central Limit Theorem: bounds for the application of the Dominated Convergence Theorem, for, on the one hand, interchanging differentiation and taking expectation and, on the other hand, for concluding from pointwise convergence to convergence of the integrals. In the following $\mathcal{H}$ denotes appropriately either the space $X$ or $\cE$. Analogously $\phi$ denotes either $\psi\in X$ or $\Phi\in\cE$, respectively, and so does $Z^{\mathcal{H}^\ast}$ denote either the $X^\ast$-- or the $\cE^\ast$--component of the limiting process $Z$.\medskip

We start presenting an argument which allows to interchange Fr\'echet differentiation and taking the expectation. Assume that 
\begin{equation}\label{app_swapping_what_we_need}
 \lim_{\|x\|_\mathcal{H}\to 0}\Big\|\EX\Bigl(\frac{h(Z_s,\phi+x)-h(Z_s,\phi)-\bigl\langle\frac{\partial h(Z_s)}{\partial\phi}[\phi],x\bigr\rangle_B}{\|x\|_\mathcal{H}}\Bigr)\Bigr\|_\mathbb{C}=0
\end{equation}
which is equivalent to
\begin{equation*}
 \lim_{\|x\|_\mathcal{H}\to 0}\Big\|\frac{\varphi(s,\phi+x)-\varphi(,s\phi)-\bigl\langle\EX\frac{\partial h(Z_s)}{\partial\phi}[\phi],x\bigr\rangle_\mathcal{H}}{\|x\|_\mathcal{H}}\Bigr\|_\mathbb{C}=0\,.
\end{equation*}
Thus the uniqueness of the derivative implies that $\frac{\partial\varphi}{\partial\phi}[\phi]=\EX\frac{\partial h(Z_s)}{\partial\phi}[\phi]$ which is what we aim for. Hence, in order to obtain \eqref{app_swapping_what_we_need} it suffices to show that
\begin{equation*}
 \lim_{\|x\|_\mathcal{H}\to 0}\EX\frac{\big\|h(Z_s,\phi+x)-h(Z_s,\phi)-\bigl\langle\frac{\partial h(Z_s)}{\partial\phi}[\phi],x\bigr\rangle_\mathcal{H}\big\|_\mathbb{C}}{\|x\|_\mathcal{H}}=0\,.
\end{equation*}
However, here the term inside the expectation converges to zero almost surely, hence we can employ the Dominated Convergence Theorem to conclude from pointwise limits to limits of the expectation. That is, we are left to find an integrable upper bound to the terms
\begin{equation*}
 \frac{\|h(Z_s,\phi+x)-h(Z_s,\psi)\|_\mathbb{C}}{\|x\|_\mathcal{H}}\quad\tn{and}\quad \Big\|\frac{\partial h(Z_s)}{\partial\phi}[\phi]\Big\|_{L(\mathcal{H},\mathbb{C})}.
\end{equation*}
In order to bound the first term we employ the Mean Value Theorem and obtain
\begin{eqnarray*}
\frac{\big\|h(Z_s,\phi+x)-h(Z_s,\phi)\big\|_\mathbb{C}}{\|h\|_\mathcal{H}} &=&\frac{1}{\|x\|_\mathcal{H}}\Big\|\Bigl(\int_0^1 i Z^{\mathcal{H}^\ast}_s h(Z_s,\phi+rx)\dif r\Bigr)x\Big\|_\mathbb{C}\\
&\leq& \int_0^1 \big\|i Z^{\mathcal{H}^\ast}_s h(Z_s,\psi+rx)\big\|_{\mathcal{H}^\ast}\dif r\\
&\leq&\|Z^{\mathcal{H}^\ast}_s\|_{\mathcal{H}^\ast}\,.
\end{eqnarray*}
For the second term we obtain
\begin{eqnarray*}
\Big\|\frac{\partial h(Z_s)}{\partial\phi}[\phi]\Bigr\|_{L(\mathcal{H},\mathbb{C})}=\|i\,h(Z_s,\phi)\,Z_s^{\mathcal{H}^\ast}\|_{\mathcal{H}^\ast} \leq \|Z_s^{\mathcal{H}^\ast}\|_{\mathcal{H}^\ast}\,.
\end{eqnarray*}
Clearly, the upper bound is integrable, thus we have completed the argument. \medskip

For the second application of the Dominated Convergence Theorem in order to conclude from pointwise convergence of $\lim_{n\to\infty}\EX^n \bigl\langle\frac{\partial h(Z^n_s,\psi)}{\partial\psi},g(s)\bigr\rangle_B=\EX\bigl\langle\frac{\partial h(Z_s,\psi)}{\partial\psi},g(s)\bigr\rangle_\mathcal{H}$ for almost all $s\leq t$ to
\begin{equation*}
\lim_{n\to\infty}\int_0^t\EX^n\bigl\langle\frac{\partial h(Z^n_s,\psi)}{\partial \psi},g(s)\bigr\rangle_\mathcal{H}\,\dif s =  \int_0^t\EX^n\bigl\langle\frac{\partial h(Z_s,\psi)}{\partial \psi},g(s)\bigr\rangle_\mathcal{H}\,\dif s
\end{equation*}
we need the integrands in the left hand side to be bounded in $n$ almost everywhere by a function integrable over $(0,t)$. Here $g(s)$ represents the points where the derivatives are evaluated such as, e.g., $\ntilde B^\ast_u[u(s),p(s)]\phi$. As these derivatives are continuous -- as is the deterministic solution $(u(s),p(s))$ -- it holds that $\sup_{s\in[0,t]}\|g(s)\|_\mathcal{H}<\infty$. Simple estimates yield
\begin{eqnarray*}
 \EX^n\bigl\langle\frac{\partial h(Z^n_s,\psi)}{\partial \psi},g(s)\bigr\rangle_\mathcal{H} &\leq& \EX^n\Big\|\frac{\partial h(Z^n_s,\psi)}{\partial \psi}\|_{\mathcal{H}^\ast}\|g(s)\|_{\mathcal{H}}\\
&\leq& \sup_{s\in[0,t]}\|g(s)\|_\mathcal{H}\,\EX^n \|i Z_s^{n,B^\ast} h(Z_s,\psi)\|_{\mathcal{H}^\ast}\\
&\leq& \sup_{s\in[0,t]}\|g(s)\|_\mathcal{H}\,\EX^n \|Z_s^{n,\mathcal{H}^\ast}\|_{\mathcal{H}^\ast}.
\end{eqnarray*}
Here the last expectation in the right hand side possesses a uniform bound independent of $n\in\mathbb{N}$ and $s\leq t$ due to Lemma \ref{lemma_uniform_integrable}. Therefore there exists an integrable dominating function.

\section{The vanishing remainder term}\label{app_vanishing_remainder}

As announced in Remark \ref{remainders} we now study the remainder terms resulting from the expansions employed in Section \ref{proof_section_characterisation}.  Some calculus in normed spaces is needed.

\subsection{Mean Value Theorem}

The key tool is a Mean Value Theorem for Gateaux derivatives. To introduce the central objects let $Y_1$ be a normed vector space (not necessarily complete) and $Y_2$ be a Banach space. Then the space $L(Y_1,Y_2)$ of all bounded, linear functions from $Y_1$ to $Y_2$ equipped with the operator norm is a Banach space \cite[Thm.~II.1.4]{Werner}. If for a function $F:Y_1\to Y_2$ the strong limit
\begin{equation}
\lim_{t\to 0}\frac{F(x+th)-F(x)}{t}=:F_u[x]h 
\end{equation}
exists, it is called the \emph{Gateaux differential at $x$ in direction $h$}. The Gateaux differential is unique, if it exists. We say $F$ is \emph{Gateaux differentiable at $x$} if the Gateux differential at $x$ exists in direction of all $h\in Y_1$ and $F$ is \emph{Gateaux differentiable}, if it is Gateaux differentiable at all $x\in Y_1$. If $F$ is Gateaux differentiable, then we call $F_u$ the \emph{Gateaux derivative} of $F$ if the map $h\mapsto F_u[x]h$ is in $L(Y_1,Y_2)$ for every $x\in Y_1$. It is not necessarily the case that a Gateaux differentiable $F$ possesses a Gateaux derivative. Finally, we say that $F$ is \emph{continuously Gateux differentiable} when the Gateux derivative exists and depends continuously on $x$, i.e., $x\mapsto F_u[x]$ is a continuous map from $Y_1$ into the Banach space $L(Y_1,Y_2)$. Then for continuously Gateaux differentiable $F$ the following Mean Value Theorem holds. 

\begin{proposition}\label{prop_mean_value_theorem} Let $F$ be continuously Gateux differentiable, then for every $x,h\in Y_1$,
\begin{equation}\label{gateaux_mean_value_theorem}
F(x+h)-F(x)\,=\,\Bigl(\int_0^1 F_u[x+\zeta h]\,\dif\zeta\Bigr)h, 
\end{equation}
where the integral in the right hand side is a Bochner integral in the Banach space $L(Y_1,Y_2)$.
\end{proposition}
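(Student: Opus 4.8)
The plan is to reduce the Banach-space identity \eqref{gateaux_mean_value_theorem} to the classical scalar fundamental theorem of calculus by testing against bounded linear functionals on $Y_2$, using that the Bochner integral commutes with bounded linear maps and that $Y_2^\ast$ separates points.

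First I would check that the right-hand side of \eqref{gateaux_mean_value_theorem} makes sense. The map $\zeta\mapsto x+\zeta h$ is continuous from $[0,1]$ into $Y_1$, and by hypothesis $y\mapsto F_u[y]$ is continuous from $Y_1$ into the Banach space $L(Y_1,Y_2)$ (here completeness of $Y_2$ is what makes $L(Y_1,Y_2)$ Banach, as recalled in the text). Hence $\zeta\mapsto F_u[x+\zeta h]$ is a continuous $L(Y_1,Y_2)$-valued function on the compact interval $[0,1]$, therefore Bochner integrable, so $\int_0^1 F_u[x+\zeta h]\,\dif\zeta\in L(Y_1,Y_2)$ is well defined and may be applied to $h$.

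Next, fix $x,h\in Y_1$ and an arbitrary $\ell\in Y_2^\ast$, and set $\varphi(\zeta):=\ell\bigl(F(x+\zeta h)\bigr)$ for $\zeta\in[0,1]$. I would show $\varphi$ is continuously differentiable with $\varphi'(\zeta)=\ell\bigl(F_u[x+\zeta h]\,h\bigr)$. Indeed, the difference quotient equals $\ell\bigl(\tfrac1t\bigl(F((x+\zeta h)+th)-F(x+\zeta h)\bigr)\bigr)$; as $t\to 0$ the argument of $\ell$ converges in $Y_2$ to the Gateaux differential $F_u[x+\zeta h]\,h$ (which exists since $F$ is Gateaux differentiable), and $\ell$ is continuous, giving the stated derivative. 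Its continuity in $\zeta$ follows by composing the continuous map $\zeta\mapsto F_u[x+\zeta h]$ into $L(Y_1,Y_2)$ with the bounded linear evaluation $T\mapsto Th$ into $Y_2$ and then with $\ell$. The scalar fundamental theorem of calculus now yields $\varphi(1)-\varphi(0)=\int_0^1\ell\bigl(F_u[x+\zeta h]\,h\bigr)\,\dif\zeta$.

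Finally, I would pull $\ell$ out of the scalar integral by the commutation of the Bochner integral with bounded linear functionals, and then pull $h$ out using commutation with the bounded linear evaluation map, so that $\int_0^1\ell\bigl(F_u[x+\zeta h]h\bigr)\,\dif\zeta=\ell\bigl(\bigl(\int_0^1 F_u[x+\zeta h]\,\dif\zeta\bigr)h\bigr)$. Since also $\varphi(1)-\varphi(0)=\ell\bigl(F(x+h)-F(x)\bigr)$, we conclude that $\ell$ annihilates $F(x+h)-F(x)-\bigl(\int_0^1 F_u[x+\zeta h]\,\dif\zeta\bigr)h$ for every $\ell\in Y_2^\ast$; by Hahn--Banach this vector is $0$, which is exactly \eqref{gateaux_mean_value_theorem}. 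There is no real analytic obstacle here once the statement is reduced to one real variable; the only point needing care is the book-keeping around Bochner integrability and the two commutation properties of the Bochner integral, which are standard (cf. \cite{Werner}) but should be invoked explicitly.
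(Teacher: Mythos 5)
Your argument is correct, but it follows a different route from the paper's. The paper sets $f(t):=F(x+th)$, observes that $f$ is Fr\'echet differentiable in $t$ with continuous derivative $f'[t]=F_u[x+th]h\in Y_2$, and invokes the Fundamental Theorem of Calculus for $Y_2$-valued (Fr\'echet-differentiable) functions of a real variable to get $F(x+h)-F(x)=\int_0^1 F_u[x+\zeta h]h\,\dif\zeta$ directly as a Bochner integral in $Y_2$; it then pulls $h$ out of the integral using the commutation of the Bochner integral with the bounded evaluation map. You instead test against an arbitrary $\ell\in Y_2^\ast$, reduce to the scalar fundamental theorem of calculus for $\varphi(\zeta)=\ell(F(x+\zeta h))$, and recover the vector identity via Hahn--Banach separation. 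Your version trades the vector-valued FTC (used as a black box in the paper) for the scalar FTC plus Hahn--Banach plus the commutation of the Bochner integral with bounded linear functionals; this is slightly longer but arguably more self-contained, since it does not presuppose any calculus of Banach-valued functions beyond Bochner integrability of continuous functions on a compact interval. All the steps you flag as needing care (continuity of $\zeta\mapsto F_u[x+\zeta h]$ into $L(Y_1,Y_2)$, hence Bochner integrability; convergence of the difference quotient under $\ell$; the two commutation properties) are handled correctly, so there is no gap.
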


\begin{proof} We set $f(t):=F(x+th)$ which is Fr\'echet differentiable in $t$ with derivative $f'[t]=F_u[x+th]h\in Y_2\triangleq L((0,1),Y_2)$ depending continuously on $t$. The Fundamental Theorem of Calculus for Fr\'echet derivatives implies that the left hand side of \eqref{gateaux_mean_value_theorem} equals
\begin{equation*}
 F(x+h)-F(x)\,=\,f_1(1)-f_1(0)\,=\,\int_0^1 F_u[u+\zeta h]h\,\dif\zeta\,.
\end{equation*}
The map $\zeta\mapsto F_u[x+\zeta h]$ is continuous from $[0,1]$ to $L(Y_1,Y_2)$ due to the continuous differentiability of $F$ for given $x,h\in Y_1$. Hence, the integral in the right hand side of \eqref{gateaux_mean_value_theorem} exists in the sense of Bochner and its properties allow that
\begin{equation*}
\Bigl(\int_0^1 F_u[x+\zeta h]\,\dif\zeta\Bigr)h \,=\, \int_0^1 F_u[x+\zeta h]h\,\dif\zeta. 
\end{equation*}
Thus the left and right hand sides in \eqref{gateaux_mean_value_theorem} are equal and the proposition is proven.
\end{proof}

\subsection{Remainder terms}

We study the remainder term $\eps_{B,n}$ in more detail. The vanishing of $\eps_{A,n}$ and $\eps_{F,n}$ follows analogously. Thus the aim is to show that for all $T>0$
\begin{equation}\label{app_vanishing_result}
\lim_{n\to\infty} \int_0^T \EX\bigl[i\sqrt{\alpha_n}\,h(Z^n_t)\,\eps_{B,n}(t)\bigr]\,\dif t\ = \ 0\,,
\end{equation}
where $\eps_{B_n}$ is the term resulting from the expansion \eqref{expansion_in_B}.\medskip

The Mean Value Theorem in Proposition \ref{prop_mean_value_theorem} yields an expression for $\eps_{B,n}$ resulting from an expansion of the map $B:H\times E\to X^\ast$ which is partially continuously Gateaux differentiable due to assumption \assref{C}{ass_C_differentiability}. We obtain the equality 
\begin{eqnarray*}
\lefteqn{B(u+(U^n-u),p+(z^n-p))}\\[2ex]
&=&B(u,p+(z^n-p))+\Bigl(\int_0^1 B_u[u+\zeta(U^n-u),p+(z^n-p)]\,\dif\zeta\Bigr)(U^n-u)\\[2ex]
&=&B(u,p)+\Bigl(\int_0^1 \underbrace{B_u[u+\zeta(U^n-u),p+(z^n-p)]}_{\in L(H,X^\ast)}\,\dif\zeta\Bigr)(U^n-u)\\[2ex]
&&\mbox{} + \Bigl(\int_0^1 \underbrace{B_p[u,p+\zeta(z^n-p)]}_{\in L(E,X^\ast)}\,\dif\zeta\Bigr)(z^n-p)\,,
\end{eqnarray*}
where for the sake of simplicity we omit the time argument. By definition \eqref{expansion_in_B} the remainder term satisfies
\begin{eqnarray*}
\eps_{B,n}& =& \langle B(U^n,z^n(\Theta^n))-B(u,p),\psi\bigr\rangle_X
- \langle B_u[u,p](U^n-u),\psi\rangle_X - \langle B_p[u,p](z^n(\Theta^n)-p),\psi\rangle_X\\[2ex]
&=:& \langle \nhat \eps_{B,n},\psi \rangle_X\,.
\end{eqnarray*}
Hence we get 
\begin{eqnarray*}
X^\ast\ni\nhat \eps_{B,n}&=& \Bigl(\int_0^1 B_u[u+\zeta(U^n-u),p+(z^n-p)]\,\dif\zeta\Bigr)(U^n-u)-B_u[u,p](U^n-u)\\[2ex]
&&\mbox{} + \Bigl(\int_0^1 B_p[u,p+\zeta(z^n-p)]\,\dif\zeta\Bigr)(z^n-p)-B_p[u,p](z^n-p)\,.
\end{eqnarray*}
Therefore, the remainder term is given by
\begin{eqnarray*}
i\sqrt{\alpha_n}\,h(Z^n)\,\eps_{B,n}&=&i\bigl\langle\Bigl(\int_0^1 B_u[u+\zeta(U^n-u),p+(z^n-p)]-B_u[u,p]\,\dif\zeta\Bigr)\Bigl(\sqrt{\alpha_n}(U^n-u)\Bigr),\psi\bigr\rangle_X\,h(Z^n)\\[2ex]
&&\mbox{} + i\bigl\langle\Bigl(\int_0^1 B_p[u,p+\zeta(z^n-p)]-B_p[u,p]\,\dif\zeta\Bigr)\Bigl(\sqrt{\alpha_n}(z^n-p)\Bigr),\psi\bigr\rangle_X\,h(Z^n)\,.
\end{eqnarray*}
This is a continuous function of the random vector $\bigl(U^n-u,z^n-p,\sqrt{\alpha_n}(U^n-u),\sqrt{\alpha_n}(z^n-u)\bigr)$ which converges weakly to the random vector $(0,0,Z)$.\footnote{The last two components being $Z^n$ converge weakly jointly to $Z$ and the first two components converge due to Theorem \ref{theorem_lln} (law of large numbers) weakly to the constant $0$. Hence joint weak convergence of the whole vector holds.} Due to the Continuous Mapping Theorem the error thus converges weakly to the functions evaluated at the limit which gives zero. We are interested in the convergence of the expectations hence due to \cite[Appendix, Prop.~2.3]{EthierKurtz} it remains to show the uniform integrability of the errors $i\sqrt{\alpha_n}\,h(Z^n)\,\eps_{B,n}$.\medskip

Due to the de la Vallee-Poussin Theorem it is sufficient to show that the $q$-th moments are uniformly bounded for some $q>1$ which we infer from the Lipschitz continuity of the partial derivatives in the following. We choose some $q\in(0,2)$. First we obtain the estimate
\begin{equation*}
\EX^n\|i\sqrt{\alpha_n} h(Z^n)\,\eps_{B,n}\|^q_\mathbb{C}\,\leq\, \EX^n\|\sqrt{\alpha_n}\,\nhat\eps_{B,n}\|^q_{X^\ast}\,\|\psi\|^q_X
\end{equation*}
and further estimating the stochastic term in the right hand side yields
\begin{eqnarray*}
\lefteqn{\EX^n\|\sqrt{\alpha_n}\,\nhat\eps_{B,n}\|^q_{X^\ast}}\\[2ex]
&\leq& 2 \EX^n\Bigl[\Big\|\int_0^1 \ntilde B_u[u+\zeta(U^n-u),p+(z^n-p)]-\ntilde B_u[u,p]\,\dif\zeta \Big\|_{L(X^\ast,X^\ast)} \bigl(\sqrt{\alpha_n}\,\|U^n-u\|_{X^\ast}\bigr)\Bigr]^q \\
&&\mbox{} +2 \EX^n\Bigl[\Big\|\int_0^1 \ntilde B_p[u,p+\zeta(z^n-p)]-\ntilde B_p[u,p]\,\dif\zeta\Big\|_{L(\cE^\ast,X^\ast)} \bigl(\sqrt{\alpha_n}\,\|z^n-p\|_{\cE^\ast}\bigr)\Bigr]^q\\
&\leq& 2\EX^n\Bigl[\int_0^1 \big\|\ntilde B_u[u+\zeta(U^n-u),p+(z^n-p)]-\ntilde B_u[u,p] \big\|_{L(X^\ast,X^\ast)}\,\dif\zeta \bigl(\sqrt{\alpha_n}\,\|U^n-u\|_{X^\ast}\bigr)\Bigr]^q \\
&&\mbox{} +2 \EX^n\Bigl[\int_0^1 \big\|\ntilde B_p[u,p+\zeta(z^n-p)]-\ntilde B_p[u,p]\big\|_{L(\cE^\ast,X^\ast)}\,\dif\zeta \bigl(\sqrt{\alpha_n}\,\|z^n_t-p\|_{\cE^\ast}\bigr)\Bigr]^q\\
&\leq& 2L^2\,\EX^n\Bigl[\bigl(\|U^n-u\|_X+\|z^n-p\|_E\bigr) \bigl(\sqrt{\alpha_n}\,\|U^n-u\|_{X^\ast}^2\bigr)\Bigr]^q \\
&&\mbox{} + 2L^2\,\EX^n\Bigl[\|z^n-p\|_E \bigl(\sqrt{\alpha_n}\,\|z^n-p\|_{\cE^\ast}\bigr)\Bigr]^q\,.
\end{eqnarray*}
Here $L>1$ is a common Lipschitz constant on the derivatives $\ntilde B_u$ and $\ntilde B_p$. Next using Young's inequality we obtain
\begin{eqnarray*}
\lefteqn{\EX^n\|\sqrt{\alpha_n}\,\nhat\eps_{B,n}\|^q_{X^\ast}}\\[2ex]
&\leq& C\EX^n\Bigl[\tfrac{2-q}{2}\bigl(\|U^n-u\|_X+\|z^n-p\|_E\bigr)^{\tfrac{2}{2-q}}+\tfrac{q}{2}\,\bigl(\sqrt{\alpha_n}\,\|U^n-u\|_{X^\ast}\bigr)^{\tfrac{2}{q}}\Bigr]^q \\[1ex]
&& \mbox{}+ C\EX^n\Bigl[\tfrac{2-q}{2}\,\|z^n-p\|_E^{\tfrac{2}{2-q}}+\tfrac{q}{2}\,\bigl(\sqrt{\alpha_n}\,\|z^n-p\|_{\cE^\ast}\bigr)^{\tfrac{2}{q}}\Bigr]^q\\[1ex]
&\leq& C\Bigl(\EX^n\|U^n-u\|_X^{\tfrac{2q}{2-q}}+\EX\|z^n-p\|_E^{\tfrac{2q}{2-q}}+\alpha_n\EX^n\|U^n-u\|_{X^\ast}^2 + \alpha_n\EX^n\|z^n-p\|^2_{\cE^\ast}\Bigr)
\end{eqnarray*}
where $\tfrac{2q}{2-q}\downarrow 2$ for $q\to 1$. Moreover the terms $\alpha_n \|U^n-u\|_{\cX^\ast}^2$ and $\alpha_n\,\|z^n-p\|_{\cE^\ast}^2$ are uniformly bounded due to Lemma \ref{lemma_uniform_integrable} and the terms $\EX\|U^n-u\|_X^{\tfrac{2q}{2-q}}$ and $\EX\|z^n-p\|_E^{\tfrac{2q}{2-q}}$ are uniformly bounded by assumption \assref{C}{ass_C_uniform_bound} for some $q\in(1,2)$. Thus we obtain
\begin{equation*}
 \lim_{n\to\infty} \EX^n\bigl[ i\sqrt{\alpha_n}\,h(Z^n_t)\,\eps_{B,n}(t)\bigr]\,=\, 0\quad\forall\,t\geq 0
\end{equation*}
and the just derived uniform boundedness over compact intervals implies \eqref{app_vanishing_result} due to the Dominated Convergence Theorem.

\end{appendix}

\bibliographystyle{plain}
\bibliography{bibliography}

\end{document}